\newtheorem{lemma}{Lemma}[section]
\newtheorem{proposition}[lemma]{Proposition}
\newtheorem{theorem}[lemma]{Theorem}
\newtheorem{claim}[lemma]{Claim}
\newtheorem{conjecture}[lemma]{Conjecture}
\newtheorem{corollary}[lemma]{Corollary}
\newtheorem{definition}[lemma]{Definition}
\newcommand{\Label}[1]{\label{#1}\note{#1}}
\newcommand{\Gm}{\Gamma}
\newcommand{\gm}{\gamma}
\newcommand{\Og}{\Omega}
\newcommand{\og}{\omega}
\newcommand{\Ld}{\Lambda}
\newcommand{\ld}{\lambda}
\newcommand{\ldp}{{\lambda'}}
\newcommand{\ap}{{\alpha}}
\newcommand{\kp}{\kappa}
\newcommand{\e}{\epsilon}
\newcommand{\dl}{\delta}
\newcommand{\bt}{\beta}
\newcommand{\Dl}{\Delta}
\newcommand{\zt}{\zeta}
\newcommand{\mc}{\mathcal}
\newcommand{\rs}{\hat{\mathbb{C}}}
\newcommand{\R}{\mathbb{R}}
\newcommand{\C}{\mathbb{C}}
\newcommand{\N}{\mathbb{N}}
\newcommand{\h}{\mathbb{H}}
\newcommand{\Z}{\mathbb{Z}}
\newcommand{\s}{\mathbb{S}}
\newcommand{\hb}{\overline{\mathbb{H}^3}}
\newcommand{\sm}{\setminus}
\newcommand{\psl}{ {\rm PSL}({\rm 2},\mathbb{C})}
\newcommand{\sq}{\sqcup}
\newcommand{\st}{\subset}
\newcommand{\lf}{\lfloor}
\newcommand{\rf}{\rfloor}
\newcommand{\td}{\tilde}
\newcommand{\dt}{\ldots}
\newcommand{\pt}{\partial}
\newcommand{\D}{\mathbb{D}}
\newcommand{\po}{\pi_1}
\newcommand{\im}{{\rm Im}}
\newcommand{\cn}{\colon}
\newcommand{\nin}{\noindent}
\newcommand{\ol}{\overline}
\newcommand{\iv}{^{-1}}
\newcommand{\pg}{\Psi_{\Gamma}}
\newcommand{\sr}{^\star}
\newcommand{\id}{{id}}
\newcommand{\mR}{\mathcal{R}}
\newcommand{\mC}{\mathcal{C}}
\newcommand{\ps}{projective structure}
\newcommand{\fb}{\bar{f}}
\newcommand{\Fb}{{\bar{F}}}
\newcommand{\Lt}{{\tilde{L}}}
\newcommand{\Nt}{\tilde{N}}
\newcommand{\Mt}{\tilde{M}}
\newcommand{\et}{\tilde{\epsilon}}
\newcommand{\elt}{\tilde{\ell}}
\newcommand{\St}{{\tilde{S}}}
\newcommand{\Ft}{{\tilde{F}}}
\newcommand{\lt}{\tilde{\ell}}
\newcommand{\Ht}{\tilde{H}}
\newcommand{\Gt}{\tilde{G}}
\newcommand{\kpt}{\tilde{\kappa}}
\newcommand{\at}{^\ast}
\newcommand{\cc}{\circ}
\newcommand{\mt}{{\tilde{\mu}}}
\newcommand{\Dlt}{\tilde{\Delta}}
\newcommand{\ett}{\tilde{\eta}}
\newcommand{\Ct}{\tilde{C}}
\newcommand{\Tt}{\tilde{T}}
\newcommand{\Qt}{\tilde{Q}}
\newcommand{\Pt}{\tilde{P}}
\newcommand{\apt}{\tilde{\alpha}}
\newcommand{\rt}{\tilde{\rho}}
\newcommand{\Fh}{{\hat{F}}}
\newcommand{\Fc}{{\check{F}}}
\newcommand{\np}{N^+}
\definecolor{darkblue}{cmyk}{1,0, 0,.7}
\newcommand\note[1]{\mbox{}\marginpar{\scriptsize\hspace{0pt}\color{darkblue}{#1}}}
\newcommand{\bb}{\begin}
\newcommand{\ee}{\end}
 \title[\today]{Complex Projective Structures with Schottky holonomy}
\author{Shinpei Baba}
\address{Department of Mathematics, California Institute of Technology, Pasadena, CA 91125, USA}
\email{shinpei@caltech.edu}
\date{\today}
\begin{document}

\maketitle

\begin{abstract}
Let $S$ be a closed orientable surface of genus at least two.
Let $\Gm$ be a Schottky group whose rank is equal to the genus of $S$, and $\Og$ be the domain of discontinuity of $\Gm$. 
Pick an arbitrary epimorphism  $\rho\colon \po(S) \to \Gm$.
Then $\Og/\Gm$ is a surface homeomorphic to $S$ carrying  a (complex) projective structure  with holonomy $\rho$.
We show that every projective structure with holonomy $\rho$ is obtained by (2$\pi$-)grafting  $\Og/\Gm$ along a multiloop on $S$.
\end{abstract}

\section{Introduction}
Throughout this paper, let $S$ denote a closed orientable surface of genus $g > 1$. 
Then Gallo, Kapovich and Marden (\cite{Gallo-Kapovich-Marden}) proved that a homomorphism  $\rho\colon \po(S) \to \psl$ is a holonomy representation of some projective structure on $S$ if and only if $\rho$ satisfies: 

\begin{itemize}
\item[(i)] the image of $\rho$ is non-elementary and
\item[(ii)] $\rho$  lifts to a homomorphism from $\po(S)$ to ${\rm SL}(2, \C)$. 
\end{itemize}
Their proof of this theorem implies that there are infinitely many distinct projective structures with fixed holonomy $\rho$ satisfying (i) and (ii) (see also \cite{Baba-10}). 
Then it is a natural question to ask for a characterization of  projective structures with fixed holonomy $\rho$.
In literature,  this question goes back to a paper by Hubbard (\cite{Hubbard-81}; see also \cite[\S 12]{Gallo-Kapovich-Marden}).
In this paper, we give a characterization of projective structures with a certain type of discrete  nonfaithful holonomy representation, using a certain surgery operation.

Basic examples of projective structures arise from Kleinian groups with nonempty domain of discontinuity.
Let $\Gm$ be a  (not necessarily classical) {\it Schottky group} of rank $g > 1$, which can be defined as a subgroup of $\psl$ isomorphic to a free group of rank $g$ consisting only of loxodromic elements (see \cite[pp 75]{Kapovich-01}, \cite{Matsuzaki-Taniguchi-98} about Schottky groups).
Let $\Og \st \rs$ denote the domain of discontinuity of $\Gm$. 
Let $\rho: \po(S) \to \Gm$ be an epimorphism, which we call a  {\it Schottky holonomy (representation)}. 
Then $\Og/\Gm$ enjoys a projective structure on $S$ with holonomy $\rho$  ({\it uniformizable structure}), when  $\Og/\Gm$ and $S$ are  homeomorphically identified in an appropriate way.

A ($2\pi$-)grafting is an operation that transforms a projective structure into another projective structure on the same surface without changing holonomy representation (\S  \ref{graft}). 
It is a surgery operation that inserts a projective cylinder  along a certain type of loop on a projective surface, called an admissible loop.
If there is a multiloop consisting of disjoint admissible loops on a  projective surface, a grafting operation can be done simultaneously along all loops. 

A {\it Schottky structure} is a projective structure on $S$ with Schottky holonomy. 
The goal of this paper is to show:
\vskip 2mm
\nin{\bf Theorem \ref{main}.}
{\it Every projective structure on $S$ with Schottky holonomy $\rho$ is obtained by grafting $\Og/\Gm$ along a multiloop on $S$. }
\vskip 2mm

{\it Remark:} 
By a quasiconformal map of $\rs$, the proof of Theorem \ref{main} is easily reduced to the case when $\Gm$ is a real Schottky group, i.e. the limit set of $\Gm$ lies in the equator $\R \cup \{\infty\}$ of $\rs$ (as in  the proof of Theorem \ref{fuchsian} below from \cite{Goldman-87}). 

A projective structure is called {\it minimal} if it can $not$ be obtained by grafting another projective structure.
If we graft an arbitrary projective structure on $S$, then the developing map of the new projective structure is necessarily surjective onto $\rs$\,; thus $\Og/\Gm$ is a minimal structure.
By Theorem \ref{main}, moreover,  $\Og/\Gm$ is the unique  minimal structure among all projective structures on $S$ with holonomy $\rho$, up to a change of marking on $S$.
There is an incorrect theorem in the literature, implying that there are other minimal structures with holonomy $\rho$, i.e. they are not $\Og/\Gm$ (Theorem 3.7.3, Example 3.7.6 in \cite{Tan-88})

Recently Thompson \cite{Thompson_10} showed that 
 if two uniformizable projective structures with holonomy $\rho$ differ by a Dehn twist, then they can be grafted to a  common projective structure.
Thus, since such Dehn twists generate the subgroup of the mapping class group of $S$ that preserves the holonomy $\rho$ and the orientation of the surface (\cite{Luft-78}), combining with Theorem \ref{main},  the set of projective structures with Schottky holonomy $\rho$ is generated by graftings.
This answers a special case of Problem 12.1.2 in \cite{Gallo-Kapovich-Marden} in the affirmative.

Theorem \ref{main} is an analog to the case of a quasifuchsian holonomy:
Let $\Gm'$ be a quasifuchsian group and let $\Og^+ $ and $\Og^-$ be the connected components of the domain of discontinuity of $\Gm'$.
Then $\Og^+/\Gm'$ and $\Og^-/\Gm'$ are  projective surfaces on $S$ with the same holonomy  $\rho'$, where $\rho'$ is an isomorphism from $\po(S)$ onto $\Gm'$.

\begin{theorem}[Goldman \cite{Goldman-87}]\label{fuchsian}
Every projective structure with quasifuchsian holonomy $\rho'$ is obtained by grafting either $\Og^+/\Gm'$ or $\Og^-/\Gm'$ along a  multiloop. 
\end{theorem}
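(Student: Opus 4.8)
The plan is to analyse developing maps of projective structures relative to the limit set $\Ld(\Gm')$, along the lines of Goldman's original argument. As in the Remark above --- since a quasifuchsian group is quasiconformally conjugate to a fuchsian one --- it suffices to treat the case that $\Gm'$ is fuchsian; then $\Ld := \Ld(\Gm')$ is the round equator $\R\cup\{\infty\}$ of $\rs$, its complementary components $\Og^+ = \h^2$ and $\Og^-$ are the two round disks, $\rho'$ is faithful with discrete image $\Gm'$, and $\Og^+/\Gm'$, $\Og^-/\Gm'$ are the two hyperbolic structures with holonomy $\rho'$. Fix a projective structure $C$ on $S$ with holonomy $\rho'$, together with a developing map $f\colon \St\to\rs$. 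Since $f$ is a topological immersion and $\Ld$ a tame Jordan curve, $\Lt := f\iv(\Ld)$ is a properly embedded $1$-submanifold of $\St$, invariant under the free deck action of $\po(S)$, hence it descends to a compact $1$-submanifold $L\st S$ --- a multiloop. (An innermost inessential component of $L$ would bound a tile that $f$ maps homeomorphically onto a closed hemisphere; then, together with the tile across it, $S$ would be a sphere, so $L$ is essential.)

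Next I would set up a two-colouring. Each component (``tile'') of $\St\sm\Lt$ is carried by $f$ into $\Og^+$ or into $\Og^-$; colour it $+$ or $-$ accordingly. As $f$ is a local homeomorphism and $\Ld$ locally separates the two hemispheres, the tiles on the two sides of a leaf of $\Lt$ carry opposite colours, so the colouring descends and the components of $S\sm L$ are $2$-coloured with colours alternating across $L$. A tile is a component of the complement, inside the disk $\St$, of a disjoint locally finite family of properly embedded lines, hence simply connected; so a component $R'$ of $S\sm L$ satisfies $\po(R') = \mathrm{Stab}_{\po(S)}(R)$ for a lift $R$, and $f|_R\colon R\to\Og^{\pm}$ is equivariant for $\rho'|_{\po(R')}$, a discrete faithful subgroup of the fuchsian group fixing $\Og^{\pm}$; in particular $R'$ inherits a hyperbolic structure and the ``grafting'' parts appear as affine annuli.

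The crux --- and what I expect to be the main obstacle --- is a rigidity statement: \emph{each tile develops homeomorphically onto the entire open hemisphere it maps into}, so that a $+$-tile $R'$ is the cover of $\Og^+/\Gm'$ with fundamental group $\mathrm{Stab}_{\po(S)}(R) \leq \Gm'$, and symmetrically for $-$-tiles and $\Og^-/\Gm'$. I would approach it through the boundary behaviour. A leaf $\ell\st\partial R$ covers a loop $\bar\ell$ of $L$ with loxodromic holonomy $\rho'(\bar\ell)$ and fixed points $p^{\pm}(\bar\ell)\in\Ld$; one shows $f|_\ell$ is injective (a leaf cannot cover all of $\Ld$, or its holonomy would act on $\Ld$ by a rotation), so $f(\ell)$ is one of the two arcs of $\Ld$ between $p^{\pm}(\bar\ell)$; then $\rho'$-equivariance is used to propagate along $\Lt$ and conclude that $f|_R$ is a homeomorphism onto the hemisphere, and that the affine annulus separating a $+$-tile from a $-$-tile is exactly a $(2\pi k)$-grafting cylinder for a positive integer $k$ --- the grafting amount, taken modulo $2\pi\Z$, being the imaginary part of the complex length of the annulus core's holonomy, which vanishes because $\Gm'$ is conjugate into $\mathrm{PSL}(2,\R)$. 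Here the discreteness and faithfulness of $\rho'$ are indispensable: they are what exclude ``spiralling'' (non-embedded) developments of a tile and force the global configuration to be the tree-of-hemispheres pattern of a grafting.

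Granting the rigidity, the conclusion is bookkeeping. A $(2\pi k)$-grafting cylinder is cut by $\Lt$ into $2k+1$ parallel sub-cylinders whose colours alternate, the two outermost of each having the colour of the adjacent hyperbolic body. Hence exactly one colour class --- say $-$, after possibly interchanging the roles of $\Og^+$ and $\Og^-$ --- consists entirely of annuli (the $\Og^-$-coloured sub-cylinders of the inserted cylinders), while the non-annular $+$-tiles are precisely the components of $\Og^+/\Gm'$ cut along the geodesic representative of $L$, completed by funnels; collapsing the $-$-cylinders --- which does not alter the holonomy, their amounts being multiples of $2\pi$ --- recovers $\Og^+/\Gm'$, so $C$ is obtained by grafting $\Og^+/\Gm'$ along $L$. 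In the remaining case $C$ is obtained by grafting $\Og^-/\Gm'$ along $L$ (the degenerate case $L=\emp$ giving $C\cong\Og^{\pm}/\Gm'$ itself). This establishes Theorem~\ref{fuchsian}.
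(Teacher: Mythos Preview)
The paper does not give its own proof of Theorem~\ref{fuchsian}; it is quoted from \cite{Goldman-87} as background for the Schottky case. So there is nothing in the paper to compare against, and your outline is essentially a sketch of Goldman's original argument: pull back the limit circle, two--colour the complementary regions, prove each region develops isomorphically onto a hemisphere, and read off the grafting description.

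Two places in your sketch need real work. First, the rigidity step: you correctly isolate it as the crux, and your observation that a leaf cannot develop onto all of $\Ld$ (else the holonomy of the underlying loop would act on $\Ld$ as a rotation rather than a hyperbolic element) is right and gives injectivity of $f$ on each leaf. But ``$\rho'$-equivariance is used to propagate\ldots'' is not an argument. The clean way to finish is metric: pull back the hyperbolic metric of $\Og^{\pm}$ via $f$ to each tile, observe that the induced metric on each component $R'$ of $S\sm L$ is complete (ends at $L$ are funnels because the boundary leaves develop to arcs in $\Ld=\partial\h^2$), hence the simply connected tile $R\subset\St$ is a complete simply connected hyperbolic surface, so $f|_R$ is an isometry onto $\Og^{\pm}$. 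You hint at the hyperbolic structure but never invoke completeness, which is what does the work.

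Second, your ``bookkeeping'' paragraph is circular as written. You describe what $L$ looks like \emph{for a grafting} (a $(2\pi k)$-cylinder is cut into $2k+1$ strips, etc.) and then write ``Hence exactly one colour class \ldots\ consists entirely of annuli''. But that is precisely what must be proved for a \emph{general} $C$ --- it is not a consequence of the cylinder picture, which presupposes the grafting description. Goldman's actual argument here proceeds by assembling the $+$-coloured pieces (each now known to be isometric to a quotient $\Og^+/\rho'(\pi_1(R'))$, hence an infinite cover of the closed surface $\Og^+/\Gm'$) and showing, via the way adjacent hemispheres share a boundary arc, that the $-$-pieces are exactly the cores of grafting annuli; equivalently one checks that collapsing the $-$-pieces yields a closed surface whose holonomy is still $\rho'$ and whose developing map is now an embedding onto $\Og^+$. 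Your final paragraph gestures at this but does not supply the mechanism that forces one colour to be annular, which is a genuine (if not long) argument and not mere bookkeeping.
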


%%%%

In Theorem \ref{fuchsian}, for a given projective structure, the choice of the multiloop, up to an isotopy, and the choice of  $\Og^+/\Gm'$ or $\Og^-/\Gm'$ are unique. 
On the other hand, in Theorem \ref{main},  there are typically infinitely many pairs of a multiloop and a marking on $\Og/\Gm$  for a single Schottky structure (see the remark preceding Lemma \ref{lemma}; see also \cite{Thompson_10}). 

Thus we discuss an approach to describe Schottky structures in a unique manner.
Fix an appropriate marking (and, therefore, an orientation) on $\Og/\Gm$ so that it has the Schottky holonomy $\rho$.
Let $\mc{P}_\rho$ be the collection of all  projective structures on $S$ of the fixed orientation with holonomy $\rho$.
Let $\phi\colon S \to S$ be a homeomorphism.
Then the {\it support} of $\phi$ is the minimal $\po$-injective subsurface $R$ of $S$, defined up to an isotopy, such that the restriction of $\phi$ to $S \sm R$  is isotopic to the identity. 
The homeomorphism $\phi$ induces an isomorphism $\phi\at\colon \po(S) \to \po(S)$. 
Let $Stab_{\rho}$ denote the subgroup of the mapping class group of $S$ consisting of orientation-preserving mapping classes $[\phi]$ such that $\rho \cc \phi\at = \rho$.
As mentioned above,  $Stab_{\rho}$ is  generated by Dehn twists along simple  loops on $S$ representing elements in $\ker(\rho)$. 
Let $\mc{AML}_\rho(S)$ denote the set of isotopy classes of multiloops on $S$ consisting of disjoint admissible loops on $\Og/\Gm$.

\begin{conjecture} Every $C \in \mc{P}_\rho$ can be obtained by changing the marking of $\Og/\Gm$ by a unique $\phi \in Stab_\rho$ and grafting $\Og/\Gm$ along a unique $L \in \mc{AML}_\rho(S)$ such that the multiloop $L$ and the support of $\phi$ are disjoint:
$$\mc{P}_{\rho}(S) \cong \{(\phi, L) \in Stab_{\rho} \times \mc{AML}_{\rho}(S)~ |~ {\it Supp}(\phi) \cap L = \emptyset \}.$$
\end{conjecture}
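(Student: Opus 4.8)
The plan is to recover the pair $(\phi,L)$ from a canonical decomposition of $C$, so that the asserted bijection reduces to a uniqueness statement. The existence half is essentially Theorem \ref{main}: given $C\in\mc{P}_\rho$, that theorem exhibits a multiloop realizing $C$ as a grafting of $\Og/\Gm$, and, together with the fact recorded after Theorem \ref{main} that the minimal Schottky structures in $\mc{P}_\rho$ are exactly the $\phi\cdot(\Og/\Gm)$ with $\phi\in Stab_\rho$, one obtains at least one admissible pair; the only extra point for existence is to arrange $Supp(\phi)\cap L=\emp$, handled in the third step. Thus the real content is the injectivity of the parametrization map $(\phi,L)\mapsto Gr_{L}\bigl(\phi\cdot(\Og/\Gm)\bigr)$ on the set $\{(\phi,L):Supp(\phi)\cap L=\emp\}$.

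\emph{Step 1: a canonical collapsing map.} To $C$ I would attach a canonical collapsing map $\kp\colon C\to C_0$ onto a \emph{minimal} element $C_0\in\mc{P}_\rho$, which collapses each component of a disjoint family of affine sub-cylinders of $C$ onto its core loop; the cores then constitute a multiloop $L$ on $S$. The natural construction is ``inverse $2\pi$-grafting along the bending locus'': the developing map $f\colon\tilde S\to\rs$ together with the $\Gm$-action on $\overline{\mathbb{H}^3}$ produces a $\rho$-equivariant locally convex pleated plane in $\mathbb{H}^3$ whose bending lamination, in the present integral ($2\pi$-grafting) situation, is a weighted multiloop; removing the associated grafting cylinders yields $C_0$ and $L$. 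One must verify that $C_0$ has holonomy $\rho$ and the same orientation (so $C_0\in\mc{P}_\rho$), that $C_0$ is minimal, and that the pair $(C_0,L)$ depends only on $C$.

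\emph{Step 2: recovering $\phi$, and the disjointness normalization.} Since $C_0$ is a minimal element of $\mc{P}_\rho$, the classification of minimal Schottky structures gives $C_0=\phi\cdot(\Og/\Gm)$ for some $\phi\in Stab_\rho$, unique modulo the subgroup of $Stab_\rho$ whose elements preserve the projective structure $\Og/\Gm$; I would show this subgroup is trivial for present purposes, using the Remark after Theorem \ref{main} to reduce to fuchsian $\Gm$ and noting that such a mapping class would induce an isometry of the associated handlebody $\mathbb{H}^3/\Gm$ acting trivially on $\pi_1$ and on the conformal boundary $\Og/\Gm$, hence is the identity. This pins down $\phi$. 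Because ungrafting along $L$ is supported in a regular neighborhood of $L$ and is inert elsewhere, while $\phi$ is a product of Dehn twists along loops in $\ker(\rho)$ by Luft's theorem \cite{Luft-78}, the multiloop $L$ can be isotoped off the support of $\phi$; this yields $Supp(\phi)\cap L=\emp$ and, since both $L$ (as the collapsing locus of the canonical $\kp$) and $\phi$ (as read off $C_0=Gr_L^{-1}(C)$) are now determined by $C$, it yields injectivity.

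\emph{Main obstacle.} The crux is the \emph{canonicity} asserted in Step 1, i.e.\ the existence of a genuine Thurston-type parametrization in the Schottky setting. In Goldman's quasifuchsian theorem (Theorem \ref{fuchsian}) this canonicity is inherited from the uniqueness, up to isotopy, of the underlying hyperbolic structure; for Schottky holonomy the underlying minimal structure is unique only up to $Stab_\rho$, so one must prove that the collapsed cylinders are intrinsic to $C$ in spite of the freedom to slide affine annuli around the handlebody $\mathbb{H}^3/\Gm$ --- precisely the interplay between the mapping-class ambiguity $Stab_\rho$ and the topology of admissible multiloops that the disjointness condition is meant to rigidify. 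An alternative, more combinatorial route is to begin with the (non-unique) multiloop furnished by Theorem \ref{main} and show that any two such multiloops for $C$ differ by a finite sequence of elementary moves, each trading a grafting loop lying in $\ker(\rho)$ for a Dehn twist in $Stab_\rho$, and that every move-orbit contains a unique representative with $Supp(\phi)\cap L=\emp$; making this precise confronts the same difficulty of controlling how affine cylinders may be rerouted through the handlebody, and I expect that to be the hardest part of the argument.
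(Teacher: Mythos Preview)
The statement you are attempting to prove is labeled a \emph{Conjecture} in the paper and is explicitly left open: immediately after stating it the paper remarks that Theorem~\ref{main} guarantees only the existence of a pair $(\phi,L)$, possibly with $Supp(\phi)\cap L\neq\emp$, and that ``the conjecture above claims that this intersection can be uniquely `resolved'.'' There is therefore no proof in the paper to compare against; your proposal must be judged as an attempt on an open problem.

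Your outline is not a proof, and you are candid about this in your ``Main obstacle'' paragraph. Two concrete gaps deserve emphasis.

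\textbf{Step 1 does not produce what you need.} The pleated-surface/bending-lamination machinery you invoke yields Thurston's parametrization $C\leftrightarrow(X,\lambda)$ with $X$ a \emph{hyperbolic} structure on $S$ and $\lambda$ a measured lamination. It does not produce a minimal element of $\mc{P}_\rho$: the basic Schottky structure $\Og/\Gm$ is not a hyperbolic structure on $S$ (its holonomy is onto a free group, and its own Thurston parameters already involve a nontrivial lamination), so the canonical Thurston collapse of $C$ lands on some hyperbolic $X$ with fuchsian holonomy $\rho_X\neq\rho$, not on any $\phi\cdot(\Og/\Gm)$. A genuine ``canonical $2\pi$-ungrafting'' adapted to Schottky holonomy would be new, and you supply none.

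\textbf{Step 2 assumes the conclusion.} The assertion ``the multiloop $L$ can be isotoped off the support of $\phi$'' is precisely the disjointness $Supp(\phi)\cap L=\emp$ that the conjecture demands, and you give no mechanism for achieving it. Knowing via Luft that $\phi$ is a product of Dehn twists along loops in $\ker(\rho)$ does not disentangle those twist curves from $L$: an admissible loop $\ell$ of $L$ has $\rho(\ell)$ loxodromic, hence $\ell\notin\ker(\rho)$, and there is no a priori reason the $\ker(\rho)$-curves supporting $\phi$ can be made disjoint from $\ell$. Nor is it clear that the resulting $(\phi,L)$ is unique once disjointness is imposed; you assert injectivity but the argument rests on the canonicity from Step~1, which is not established. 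Your alternative ``elementary moves'' route faces the same obstruction, as you yourself note.

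In short, what you have written is a reasonable strategic sketch for attacking the conjecture, honestly flagging its own principal difficulty, but it is not a proof, and the paper does not claim one.
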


Theorem \ref{main} ensures that, for every $C \in \mc{P}_{\rho}$, there is a corresponding pair  $(\phi, L) \in Stab_{\rho} \times \mc{AML}_{\rho}(S)$; however ${\it Supp}(\phi) \cap L$ may be nonempty. 
The conjecture asserts that this intersection can be uniquely ``resolved''. 

{\it Outline of the proof of Theorem \ref{main}.} 
Let $C$ be  a projective structure on $S$ with Schottky holonomy $\rho$.
The main step is to reduce the theorem to the case of ``genus zero'': 
We decompose $C$, by cutting it along a multiloop $M$, into certain (very simple) projective structures on holed-spheres, called  {\it good holed spheres}  (\S \ref{AlmostGood}):  
Namely each component $F$ of $C \sm M$ has trivial holonomy and the boundary components of $F$, via its developing map,  cover disjoint loops that bound disjoint disks on $\rs$.
Then we show that the the good holed sphere $F$ is obtained  by grafting a holed sphere embedded in $\rs$ along a multiarc  (Proposition \ref{GoodHoledSphere}).

The multiloop on $S$ in Theorem \ref{main} is realized as the union of such multiarcs over all components of $C \sm M$, and the uniformizable structure $\Og/\Gm$ the union of the corresponding embedded holes spheres. 

It is straightforward to find a multiloop realizing  the above decomposition on $C$ except that different boundary components  of $F$ may cover the same loops on $\rs$ (see \S \ref{IntoAlmostGood}).
Then the main work is  to isotope this imperfect multiloop on $C$ to a desired one.
To realize this isotopy, we construct a certain embedding a handlebody bounded by $S$ to itself associated with the projective structure $C$ (\S \ref{CellularStructures}). 

{\it Acknowledgements:}  I am grateful to Ken Bromberg, Ulrich Oertel, and Brian Osserman for helpful discussions. 
 I  thank Misha Kapovich for his valuable comments.  
In particular, the proof of Proposition \ref{grafting} in this paper is developed based on his suggestion.  
I also thank the referee of the paper.

%%%%%%%%%%%%%%%%%%%%%

%                Definitions

%%%%%%%%%%%%%%%%%%%%
\section{conventions and terminology}

In this paper, unless otherwise stated, 

\begin{itemize}
\item a surface is connected and orientable;
\item a loop and arc are simple; 
\item a component is a connected component.
\end{itemize}

As usual,
\begin{itemize}
\item a loop on a surface $F$ may be regarded as an element of $\po(F)$. 
\end{itemize}

Here is some terminology often used in this paper: 
\begin{itemize}
\item 
A {\it multiloop} is the union of isolated and disjoint simple closed curves embedded in a surface; similarly a {\it multiarc} is the union of isolated and disjoint simple curves property embedded in a surface. 
\item Let $F$ be a surface and let $F_1$ and $F_2$ be subsurfaces of $F$.
Then we say that $F_1$ and $F_2$ are {\it adjacent} if $F_1$ and $F_2$ have disjoint interiors and share a boundary component. 
\end{itemize}
 
\section{Preliminaries}  

%%%

%Preliminarie

\subsection{Projective structures}
Let $F$ be a connected orientable surface possibly with boundary.
A {\it (complex) projective structure} is a $(\rs, \psl)$-structure, i.e.\  an atlas modeled on  the Riemann sphere $\rs$ with its transition maps lying in $\psl$.
Let $\Fb$ denote the universal cover of $F$.
It is well-known that a {\rm projective structure} is equivalently defined as a pair $(f, \rho)$ consisting of
\begin{itemize}
\item  a topological immersion $\fb\colon \Fb \to \rs$ (i.e.\ a locally injective continuous map) and
\item a homomorphism $\rho\colon \po(F) \to \psl$ 
\end{itemize}
such that $\fb$ is $\rho$-equivariant, i.e.\  $\fb \cc \ap = \rho(\ap) \cc \fb$ for all $\ap \in \po(F)$ (see \cite[\S 3.4]{Thurston-97}).  
The immersion  $\fb$ is called the {\it (maximal) developing map} and the homomorphism $\rho$ is called the {\it holonomy (representation)} of the projective structure.
A projective structure is defined up to an isotopy of $F$ and an element of $\psl$, that is,  $(\fb, \rho) \sim (\gm \cc \fb, ~\gm \cc \rho \cc \gm\iv)$ for all $\gm \in \psl$.  

If $C$ is a projective structure on $F$, the pair $(F, C)$ is called a {\it projective surface}.
As usual, we will often conflate the projective structure $C$ and the projective surface $(F, C)$.

% Def(Minimal developing map)      
\subsubsection{Minimal developing maps}\label{MinDev}
Let $C = (\bar{f}, \rho)$ be a \ps\ on $F$. 
Then the short exact sequence
$$ 1 \to \ker(\rho) \to \pi_1(F) \xrightarrow{\rho} {\rm Im}(\rho) \to 1$$  induces an isomorphism $\td{\rho}\colon \pi_1(F)/\ker(\rho) \to {\rm Im}(\rho)$.
Let $\td{F} = \bar{F} / \ker(\rho)$ and let $\phi: \Fb \to \Ft$ denote the corresponding covering map;  
then $\td{F}$ is the cover of $F$ whose deck transformation group is $\pi_1(F)/\ker(\rho)$.
Then the maximal developing map $\fb\cn \Fb \to \rs$ canonically descends  to a locally injective map $f\colon \td{F} \to \rs$ that is $\td{\rho}$-equivariant, so that $\fb = \phi \cc f$. 
 We call this map $f$ the {\it minimal developing map} of $C$ and the cover $\td{F}$  the {\it developable  cover} of $F$ associated with $\rho$ (c.f. \cite{Kullkani-Pinkall-98}).

Conversely the pair $(f, \rho)$ of the minimal developing map and the holonomy representation of $C$, recovers the maximal developing map $\fb$. 

Since Schottky representations have a large kernel, it is much easier to work with minimal developing maps. 
Thus,  for the remainder of the 
paper, unless otherwise stated, developing maps are always minimal developing maps; in particular,  we use a pair $(f, \rho)$ of a minimal developing map and a holonomy representation to represent  a projective structure. 
We denote the minimal developing map of a projective structure $C$ by $dev(C)$.

\subsubsection{Restriction of projective structures to subsurfaces} 
Let $C = (f, \rho)$ be a projective structure on a surface $F$.
Let $E$ be  a subsurface of $F$.
The {\it restriction} of $C$ to $E$ is the projective structure on $E$ given by restricting the atlas of $C$ on $F$ to $E$, which we denote by $C|E$.
We can equivalently define the restriction $C|E$ as a pair of the (minimal) developing map and a holonomy representation as follows: 
The inclusion $E \st F$ induces a homomorphism $i\at\colon \po(E) \to \po(F)$.
Let $\td{F}$ be the developable cover of $F$ associated with $\rho$ and let $\td{E}$ be a lift of $E$ to $\td{F}$ invariant under $\po(E)$. 
Then the restriction of $C$ to $E$ is  the pair of $f|_{\td{E}}\cn \td{E} \to \rs$ and  $\rho \cc i\at\cn \po(E) \to \psl$.

\subsubsection{Embedded projective structures}\label{basic}

A projective surface $C$ is {\it uniformizable} (by $\Gm$) if $C$ is isomorphic to a component of the ideal boundary of a hyperbolic 3-manifold $\h^3/\Gm$, where $\Gm$ is a Kleinian group. 
More generally a projective surface $C$ is {\it embedded} if its (minimal) developing map  is an embedding onto a subset of $\rs$.
Then it is easy to show: 
\begin{lemma}\label{basicSchottky}
Let $C$ be a projective structure on $S$ with Schottky holonomy $\rho\colon \po(S) \to \Gm \st \psl$, where $\Gm$ is a Schottky group.
Then $C$ is embedded  if and only if it is uniformizable by $\Gm$, i.e. $dev(C)$ is an embedding onto  the domain of discontinuity of $\Gm$.
\end{lemma}
\begin{proof}
Suppose that $C  = (f, \rho)$ is embedded. 
Then we first show that the developing map $f\cn \St \to \rs$ is an embedding into the complement of the limit set $\Ld$ of $\Gm$.
Suppose that there is  $x \in \St$ such that $f(x) \in \Ld$.
Then there is a compact neighborhood of $x$ in $\St$ that $f$ takes homeomorphically onto its image. 
On the other hand, since $f$ is equivariant under the isomorphism $\td{\rho}\cn \po(S)/ \po(\St) \to \Gm$, for any $y \in \St$, then $f(\Gm \cdot y)$ accumulates to the limit point $f(x)$.
Then $f$ is not injective, which is a contradiction. 

Then $f$ is injective, and thus $C$ embeds into $\Og/\Gm$. 
Since $C$ and $\Og/\Gm$ are homeomorphic to $S$,  thus $f$ must be an embedding onto $\Og$.
\end{proof}

\subsection{Good and almost good projective structures}\label{AlmostGood}

Let $F$ be a surface obtained from $\s^2$ by removing disjoint points and closed (topological) disks, i.e $F$ is a genus-zero surface.
In addition we assume that the Euler characteristic of  $F$ is non-positive.
Then let $M$ be the union of the boundary circles of  the removed disks, and let $P$ be the union of the removed points. 

A projective structure $C = (f, \rho)$ on the surface $F$ is \textit{almost good} if: 
\bb{itemize}
\item[(i)] $\rho\colon \po(F) \to \psl$ is the trivial representation (so that the domain of $f$ is $F$),
\ee{itemize}
and there is a genus-zero surface $R$ embedded in $\rs$ such that 
\begin{itemize}
\item[(ii)]   
$f$ continuously extends to $P$, taking $P$ into the set of punctures of $R$
, and
\item[(iii)] each component of $M$, via $f$,  covers a boundary component of $R$. 
\end{itemize}

The surface $R$ is called a {\it support} of the almost good projective structure $C$. 
Then there is a unique component of   $\rs \sm f(P \cup M)$ in $\rs$ that is also a support of $C$, and it is contained in $R$.
Thus we call the component the {\it full support} of $C$ and denote it by ${\it Supp}(C)$ or, alternatively,  ${\it Supp}_f(F)$ to indicate the associated developing map. 
Clearly the developing map $f\cn F \to \rs$ has the lifting property along every path  disjoint from the boundary of ${\it Supp}(C)$.

Moreover a projective structure $C = (f, \rho)$ on $F$ is ${\it good}$ if it is almost good and in addition\\

\begin{itemize}
\item[(vi)] $f$ yields bijections between the punctures of $F$ and of ${\it Supp}(C)$ 
and the boundary components of $F$ and  of ${\it Supp}(C)$.
\end{itemize}

Then, if $C$ is a good structure on $F$, there is a homeomorphism between $F$ to ${\it Supp}(C)$ realizing the bijections in (vi).
Obiously the full  support enjoys a good embedded projective structure on $F$ whose developing map is such a homeomorphism; this is called  an {\it embedded structure associated with} the good structure $C$. 
Then this structure associated structure is unique up to an element of the pure mapping class group of $F$.

Let us return to the case that $C = (f, \rho)$ is an almost good projective structure on $F$ supported on $R$. 
Then we can naturally extend  $C$ to an almost good structure on a punctured sphere homotopy equivalent to $F$, so that the  developing map of this extension is a branched covering of $\s^2$ to $\rs$ as follows:  
Along each boundary component $l$ of $F$,  attach a disk $D$ minus a point $p$.
The loop $f(l)$ bounds a unique disk $D'$ in $\rs$ whose interior is disjoint from the support $R$.
Pick a point $p'$ in the interior $D'$.
Then continuously extend the developing map $f$ to $D$ so that $f| D$ is a branched covering map of $D$ onto $D'$ such that $p$ is the unique ramification point and $p'$ is the unique branched point.

\subsection{Grafting}\label{graft}
Grafting was initially developed as an operation that transforms a hyperbolic strucuture to a projective structure on a surface by inserting a flat affine cylinder along a circular loop (Maskit \cite{Maskit-69}, Hejhal \cite{Hejhal-75}, Sullivan-Thurston \cite{Sullivan-Thurston-83}, Kamishima-Tan \cite{Kamishima-Tan-92}). 
Goldman used a variation of this grafting operation, which is often called {\it $2\pi$-grafting}: 
His grafting is, to an extent, more special  since it preserves the holonomy representation of a projective structure, but, to another extent, more general since it can be done along a non-circular loop (\cite{Goldman-87}). 

In this paper, we basically follow the definition given by Goldman.
Yet, following out convention,  we define grafting with respect to minimal developing maps.
In addition, we define grafting  also along an  arc properly embedded in a certain kind of projective surface so that grafting is compatible with identifying boundary components of projective surfaces. 
For simplicity, we graft  only embedded projective structures.

Let $R$ be the complement of a union of (possibly infinitely many) disjoint points and closed (topological) disks in $\rs$.
Then $R$ enjoys an embedded  projective structure with trivial holonomy. 
Let $F$ be the topological surface homeomorphic to $R$.
Let $\ap$ be a simple arc property embedded in $R$ that does not bound a disk in $C$\,; 
then $\ap$ connects different components $X, Y$ of $\rs \sm R$, which are a disk or a point. 
Since $X$ and  $Y$ are disjoint, $\rs \sm (X \cup Y) =: A$ is homeomorphic to a cylinder. 
Then $A$ carries an embedded  projective structure with trivial holonomy and $\ap$ is also canonically embedded in $A$.   
 Since $\ap$ is property embedded in both projective surfaces $R$ and $A$, we can cut and past them along $\ap$ to make a new  projective structure on $F$. Namely we can pair up  the boundary segments of  $R \sm \ap$ and $A \sm \ap$ coming from $\ap$ in an alternating fashion  (as in  Figure \ref{DefGrafting}).  
 
 \begin{figure}[h]
\begin{overpic}[scale=1%, grid,tics=10
]{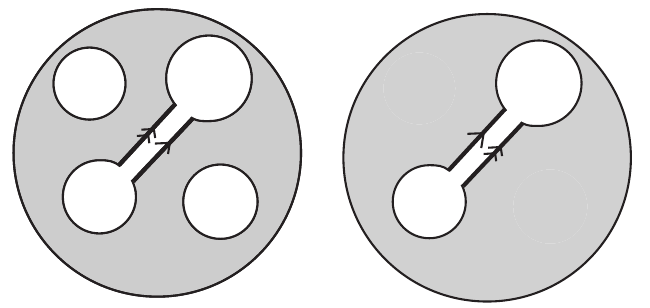}
      \put(70,5){$A \sm \ap$}
      \put(20,5){$R \sm \ap$}
      %   \put( , ){}  
      \end{overpic}
\caption{Cut-and-past for grafting of a four-holed sphere.}\label{DefGrafting}
\end{figure}

This operation is called the {\it grafting (operation)} on $R$ along $\ap$, and the resulting structure is denoted by $Gr_\ap(R)$.
Then $Gr_\ap(R)$ is also a good structure fully  supported $R$: 
If an endpoint of $\ap$ is a puncture, near this point, the new developing map is a branched covering of degree 2; similarly
if an endpoint of $\ap$ is on a boundary component of $R$,  this component is covered by the corresponding boundary component of $F$ via the new developing map and its covering degree is 2. 
Clearly $Gr_\ap(R)$ is non-embedded.
In addition when we isotopy $\ap$ on $R$ properly, the resulting structure $Gr_\ap(R)$ does $not$ change. 

If there is a multiarc $M$ on $R$ consisting of arcs that, as above, connect different punctures or boundary components of $R$, we  can simultaneously graft along all arcs of $M$ and obtain a good projective structure with full  support $R$. 
Similarly  denote this resulting structure by $Gr_M(R)$.
 
Next we define grafting along a loop on an embedded  projective structure $C = (f, \rho)$  on a surface $F$.
We set $f\cn \Ft \to \rs$, where $\Ft$ is the developable  cover of $F$ associated with holonomy $\rho$.
Accordingly, lifting $C$, we obtain a projective structure $\Ct$  on $\Ft$.
Then the holonomy of $\Ct$ is trivial and  $dev(\Ct)$ is an embedding of $\Ft$ into $\rs$ (see \S \ref{MinDev}). 
A loop $\ell $ on the embedded  projective surface $(F, C)$ is called {\it admissible} if $\rho(\ell)$ is loxodromic.
If $\ell$ is admissible, letting $\lt$ be a lift of $\ell$ to $\Ft$ invariant under $\ell \in \po(F)$, then $f(\lt)$ is a simple arc on $\rs$ invariant by $\rho(\ell)$.
Thus the end points of $f(\lt)$ are the fixed  points of the loxodromic element $\rho(\ell)$\,;  denote this fixed point set by  $Fix(\rho(\ell))$.
If $\ell$ is an admissible loop,  $\rs \sm Fix(\rho(\ell))$ is a twice-punctured sphere and $f(\lt)$ is a properly embedded arc in $\rs \sm Fix(\rho(\ell))$ connecting the punctures.  The infinite cyclic group $\langle \rho(\ell) \rangle$ acts on $\rs \sm Fix(\rho(\ell))$, preserving $f(\lt)$, and its quotient is thus a (Hopf) torus $T$ containing a copy of $\ell$.
Since $\ell$ is embedded in both $C$ and $T$, we can similarly obtain a new projective structure on $F$ by a canonical cut-and-past operation along $\ell$: Insert the cylinder $T \sm \ell$ between the boundary components of $C \sm \ell$.
This operation is called a {\it grafting} on $C$ along $\ell$, and we denote the resulting structure by $Gr_\ell(C)$. 

The {\it total lift} ${\bf l}$ of $\ell$ on $F$ to $\Ft$ is the union of all lifts of $\ell $ to $\Ft$.
Then the grafting of $C$ along $\ell$ lifts the grafting of $\Ct$ along ${\bf l}$.
Thus we see that $\Ct \sm {\bf l}$ is isomorphically embedded in $Gr_{\bf l}(\Ct)$ and, with respect to this embedding, the developing map of $\Ct \sm {\bf l}$ does not change under the grafting. 
Thus the maximal developing map of $Gr_\ell(C)$ is also $\rho$-equivariant, and the grafting preserves the holonomy $\rho$. 

If there is a multiloop on $(F,C)$ consisting of disjoint admissible loops, similarly we can graft $C$ simultaneously along all loops of the multiloop.

\subsection{Hurwitz spaces}\label{Hurwitz}

Let $d_i ~(i= 1,2, \dt, k)$ be integers greater than $1$.
Consider a pair consisting of a set of $k$ distinct ordered points $P_i \,(i =1,2, \dt, k)$ on $\rs$ and a rational function $\tau\colon \rs \to \rs$ such that $P_i$ are the ramification points of $\tau$ with ramification index $d_i$ (i.e.\ $\tau$ has zero of order $d_i$ at $P_i$) and that $\tau(P_i)$ are distinct points on $\rs$. 
Let $\mR = \mR(d_1, d_2, \dt, d_k)$ denote the space of all such pairs $(P_i, \tau)$.
Then $\psl$ acts on $\mR$ by postcomposition. 
The quotient space  $\mR / \psl$ is called a {\it Hurwitz space}, which we denote by $\mc{H} = \mc{H}(d_1, d_2, \dt, d_k)$.
It is well known that the map $\tau \mapsto \linebreak[4] (\tau(P_1), \tau(P_2), \dt, \tau(P_k))$ is a covering map from $\mc{R}$ onto $\rs^k \sm (diagonals)$, and therefore $\mR$ is  a complex manifold. 
We see that $\psl$ acts on $\mR$ freely and holomorphically.
Therefore  $\mc{H}$ is also a complex manifold and  $\mR$ is a $\psl$-bundle over $\mc{H}$.  (See \cite{Fulton-69, Volklein-96}.)
Moreover, 
\begin{theorem}[Liu-Osserman \cite{Liu-Osserman}]\label{connected}
$\mc{H}$ is a connected manifold; hence, $\mR$ is also a connected manifold.
\end{theorem}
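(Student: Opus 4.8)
The final statement is Theorem~\ref{connected} (Liu--Osserman): the Hurwitz space $\mc{H}(d_1,\dots,d_k)$ is connected, and hence so is $\mR(d_1,\dots,d_k)$.

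\medskip

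\textbf{Approach.} The plan is to prove connectedness of $\mR$ directly and then deduce connectedness of $\mc{H}$ as the base of the $\psl$-bundle $\mR \to \mc{H}$ (a continuous surjection from a connected space has connected image, and $\psl$ is connected so no subtlety arises in the other direction). To prove $\mR$ is connected, I would exploit the covering map $\pi\colon \mR \to \rs^k \sm (\mathrm{diagonals})$ given by $\tau \mapsto (\tau(P_1),\dots,\tau(P_k))$, which was already noted in the excerpt. Since the base $\rs^k \sm (\mathrm{diagonals})$ is connected, the connected components of $\mR$ are in bijection with the orbits of the monodromy action of $\pi_1(\rs^k \sm (\mathrm{diagonals}))$ on a fiber $\pi\iv(b)$. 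So the whole theorem reduces to the purely combinatorial/group-theoretic statement: \emph{the monodromy group acts transitively on the fiber}.

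\medskip

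\textbf{Key steps.} First, I would identify the fiber $\pi\iv(Q_1,\dots,Q_k)$ concretely. A point of the fiber is a branched cover $\tau\colon \rs \to \rs$ with $k$ branch points, the $i$-th having a single preimage $P_i$ with full ramification $d_i$; note $\sum (d_i - 1) = 2\deg\tau - 2$ by Riemann--Hurwitz, so $\deg\tau = 1 + \tfrac12\sum(d_i-1)$ is forced. By the Riemann existence theorem, such covers are classified by tuples of permutations: fixing loops around the branch points $Q_1,\dots,Q_k$, a cover corresponds to $(\sigma_1,\dots,\sigma_k) \in S_n^k$ with $\sigma_1\cdots\sigma_k = \mathrm{id}$, the group $\langle \sigma_1,\dots,\sigma_k\rangle$ transitive, and each $\sigma_i$ an $n$-cycle composed with fixed points --- more precisely each $\sigma_i$ is a single $d_i$-cycle (since $P_i$ is the unique ramification point over $Q_i$ with index $d_i$, and all other preimages are unramified) --- modulo simultaneous conjugation in $S_n$. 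The second, and main, step is to show the braid-group monodromy acts transitively on the set of such tuples. The monodromy of $\pi$ over $\rs^k \sm (\mathrm{diagonals})$ factors through (a variant of) the spherical braid group on $k$ strands, acting on tuples by the standard Hurwitz moves $(\dots,\sigma_i,\sigma_{i+1},\dots) \mapsto (\dots,\sigma_i\sigma_{i+1}\sigma_i\iv,\sigma_i,\dots)$. One must show any two admissible tuples of $d_i$-cycles (with the product-one and transitivity constraints) are connected by such moves. The standard technique is: use Hurwitz moves to first normalize the cycle structure and bring the tuple to a ``standard'' form (e.g.\ sort so that a maximal collection of the cycles share overlapping supports in a chain, reducing to the genus-zero full-cycle normal form), invoking a Clebsch--Hurwitz type connectivity lemma for tuples of cycles generating a transitive subgroup of $S_n$ with the prescribed product. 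Finally I would translate transitivity of monodromy back to connectedness of $\mR$, and then push down to $\mc{H}$.

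\medskip

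\textbf{Main obstacle.} The crux is the combinatorial connectivity step: showing the Hurwitz (braid) action is transitive on the set of tuples $(\sigma_1,\dots,\sigma_k)$ where $\sigma_i$ is a $d_i$-cycle, $\prod \sigma_i = \mathrm{id}$, and $\langle \sigma_i \rangle$ is transitive. This is a known but delicate result in the combinatorial theory of Hurwitz spaces (in the spirit of Clebsch, Hurwitz, and later work of Kluitmann, Natanzon--Turaev, and others); the difficulty is handling cycles of arbitrary, unequal lengths $d_i$ simultaneously rather than just transpositions. I would expect to spend most of the effort setting up an induction --- on $n = \deg\tau$ or on $\sum d_i$ --- in which Hurwitz moves are used to ``merge'' or ``split off'' cycles so as to reduce to a smaller case or to the base case of a single branch point / the genus-zero dessins normal form, carefully checking that the transitivity constraint is preserved at each merge. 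The reduction from $\mR$ to $\mc{H}$ and the identification of the fiber via Riemann existence are, by contrast, routine.
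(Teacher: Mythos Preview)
The paper does not prove Theorem~\ref{connected}; it is quoted from Liu--Osserman \cite{Liu-Osserman} and used as a black box (specifically inside the proof of Proposition~\ref{TopEquiv}). So there is no in-paper argument to compare against.

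That said, your outline is essentially the strategy of the cited paper itself: reduce via the branch-point map and the Riemann existence theorem to the combinatorial statement that the braid group acts transitively on tuples $(\sigma_1,\dots,\sigma_k)$ of single cycles of lengths $d_1,\dots,d_k$ in $S_n$ with $\prod_i\sigma_i=\mathrm{id}$ and $\langle\sigma_i\rangle$ transitive, taken up to simultaneous conjugation. One point to tighten: since the branch points $Q_i$ are ordered and the $d_i$ may differ, the monodromy group of the covering $\mR\to\rs^k\setminus(\text{diagonals})$ is the \emph{pure} spherical braid group, not the full one; the standard Hurwitz swap $(\sigma_i,\sigma_{i+1})\mapsto(\sigma_i\sigma_{i+1}\sigma_i^{-1},\sigma_i)$ is not a pure braid and would permute the cycle types. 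In practice one argues with full braid moves while tracking the labeling, showing every admissible tuple can be brought to a fixed normal form carrying the correct ordering --- this is a cosmetic fix, not a gap. The genuine content, as you correctly flag, is the pure-cycle transitivity lemma; your proposed induction on $n$ or $\sum d_i$ via merging and splitting cycles is in the spirit of the known arguments, but filling in those details is exactly the work done in \cite{Liu-Osserman} and is not short.
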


%%%%%%%%%%%%%%%%%%%%%%
% (C, S) -->  good holed spheres
%%%%%%%%%%%%%%%%%%%%%%
\section{Decomposition of a Schottky structure into almost good holed spheres}\label{IntoAlmostGood}

\begin{definition}
Let $C = (f, \rho)$ be a \ps\ on a surface $F$.
A loop $\ell $ on $F$ is a {\it meridian} if $\ell $ is an essential loop on $F$ and $\rho(\ell) = id$.
\end{definition}

\begin{definition}
Let $F$  be a surface and $M$ be a multiloop on $F$.
The {\it essential part} of $M$ is the union of all essential loops of $M$,
which we denote  by $\lf M \rf$. 
\end{definition}
 
%Support Lemma%

\begin{lemma}\label{support}

Let $F$ be a surface of genus zero and $C = (f, \rho_{id})$ be an almost good projective structure on $F$ supported on a surface $R \st \rs$.
Let $\ell $ be  an inessential loop in the complement of $\pt R$ in $\rs$.
Then $\lf f\iv(\ell) \rf = \emptyset$.
\end{lemma}

\begin{proof}
Suppose that $\ell$ is an inessential loop in $\rs \sm \pt R$. 
Then $\ell $ bounds a disk $E$ contained in $\rs \sm \pt R$.
Since $C$ is almost good,  $f$ has the lifting property along every path on $\rs \sm \pt R$.
Thus each component of $f\iv(\ell)$ is a loop bounding a disk in $F$ that $f$ homeomorphically maps onto $E$.
Therefore $\lf f\iv(\ell) \rf = \emptyset$.
\end{proof}

%Pull-back of multiloop%

\subsection{Pullback of a multiloop via a developing map}\label{pullback}
Let $\Gm$ be a real Schottky group of rank $g$. (However this assumption that $\Gm$ is real is not essential since our arguments will be all topological.)
Let $\Og$ denote the domain of discontinuity of $\Gm$.
Let $C' = \Og / \Gm$, which is a uniformizable projective structure with  Schottky holonomy $\rho\cn \po(S) \to \Gm$.
Let $H'$ denote the handlebody $(\Og \cup \h^3) / \Gm$ bounded by $C'$.
Let $\St$ be the developable  cover $S$ with respect to $\rho$, so that $\Gm$ is its covering transformation group (see \S \ref{MinDev}), and let $\pg\cn\St \to S$ denote the covering map.
Then we see that $\St$ is naturally homeomorphic to $\rs \sm \Ld$, where $\Ld$ is the limit set of $\Gm$, and in particular $\St$ is  planar.  

Let $C$ be a projective structure on $S$ with holonomy $\rho$.
Set $C = (f , \rho)$, where $f\colon \St \to \rs$ is the developing map of $C$.
Let $\rt\cn \po(S)/ \ker(\rho) \to  \Gm$ be the isomorphism induced from $\rho$.
Then $f$ is $\rt$-equivariant.

Let $\ell$ be a meridian loop on $C'$. 
Let $\elt$ be a lift of $\ell$ to $\Og$, which is also a loop.
Then 
\begin{lemma}\label{multiloop}
Then $f\iv(\elt)$ is a multiloop\footnote{There may be locally infinitely many loops.} on $\St$.
\end{lemma}

\begin{proof}
Since $f$ has the path lifting property in the domain of discontinuity $\Og$  (see \cite{Kuiper-50}),  $~f\iv(\elt)$ is a one-manifold properly embedded in $\St$.
Moreover,   since $f$ is $\td{\rho}$-equivariant, each  $x \in \St$ has a neighborhood $U$, such that, if $\gm \in \Gm$, then $\gm U \cap f\iv(\elt)$ is either a  single arc property embedded in $U$ or the empty set.
Therefore $f\iv(\elt)/\Gm$ is a multiloop on $(S, C)$.

Suppose that $f\iv(\elt)$ contains a biinfinite simple curve $\ap$.
Then $\ap$ covers the loop $\elt$ via $f$.
On the other hand, since $\ap/\Gm$ is a component of  the multiloop $f\iv(\elt)/\Gm$, there is a non-identity element of $\Gm$ that preserves $\ap$.
Then, since $f$ is $\td{\rho}$-equivariant,  $f(\ap)$ is an unbounded curve in $\Og$. 
This is  a contradiction, since $f(\ap)$ is contained in the loop $\elt$. 
\end{proof}

A loop on the boundary of a handlebody is called {\it meridian} if it bounds a disk properly embedded in the handlebody. 
Let $N'$ be a multiloop on $C' = \pt H'$ consisting of meridian loops. 
Let $\Nt'$ be the  total lift of $N'$ to $\Og$.
Then $\Nt'$ is a $\Gm$-?nvariant multiloop on $\Og$.
Since $f$ is $\td{\rho}$-equivariant, by Lemma \ref{multiloop}, $f\iv(\Nt')$ is a  $\Gm$-invariant multiloop on $\St$.
 (A compact subset of $\St$ contain infinitely many components of $f\iv(\Nt')$.)  
Let $\Nt = \lf f\iv(\Nt') \rf$.
Let $N$ denote the multiloop on $(S, C)$ obtained by quotienting $\Nt$ by $\Gm$, which we call the {\it pullback} of $N'$ (via $f$).

\begin{proposition}\label{N}
Assume that $N'$ is a multiloop on $C'$ such that
\begin{itemize}
\item[(I)] $N'$ consists of finitely many meridian loops and
\item[(II)] each component of $C' \sm N'$ is a holed sphere. 
\end{itemize}
Then the pullback $N$ of $N'$ via $f$ satisfies
\begin{itemize}
\item[(i)] $N$ is nonempty,
 \item[(ii)] $N$ consists of finitely many meridian loops on $S$ (with respect to $\rho$), and 
 \item[(iii)]  if $Q$ is a component of $S \sm N$, then the restriction $C|Q'$ is an almost good holed sphere supported on a component of $\Og \sm \td{N}'$, and
 the full  support of $C|Q'$ has at least two boundary components. 
\end{itemize}
\end{proposition}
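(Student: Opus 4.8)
The plan is to analyze the three conclusions in turn, building on Lemma~\ref{multiloop}, Lemma~\ref{support}, and the structure of the minimal cover $\St \cong \rs \sm \Ld(\Gm)$.

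First, for (ii), note that by hypothesis (I) each loop $\mt'$ of $\Nt'$ homeomorphically projects to a meridian loop of $N'$ on $S'$, so Lemma~\ref{multiloop} applies and $f\iv(\Nt')$ is a $\Gm$-invariant multiloop on $\St$; passing to the essential part $\Nt = \lf f\iv(\Nt')\rf$ and quotienting by $\Gm$ gives a multiloop $N$ on $S$. That $N$ consists of \emph{meridian} loops (with respect to $\rho$) is immediate: each component $m$ of $\Nt$ is a compact loop in $\St$, hence bounded in $\rs \sm \Ld(\Gm)$, so its image in $\po(S)/\ker(\rho) = Im(\rho) = \Gm$ is trivial, i.e. $\rho(m) = id$; essentiality is built into the definition of $\lf \cdot \rf$, so each component of $N$ is a meridian. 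Finiteness is the point needing care: a priori $\Nt'$ accumulates on $\Ld(\Gm)$, and so does $f\iv(\Nt')$ near $\Ld$, but I claim only finitely many components survive in the essential part modulo $\Gm$. The argument is that $S' \sm N'$ has finitely many components, each a holed sphere, and a component $m$ of $f\iv(\mt')$ is inessential in $\St$ exactly when it bounds a disk; using that $f$ restricted over a small disk transverse to $\mt'$ is a local homeomorphism and that $\St/\Gm = S$ is compact, the essential components of $f\iv(\Nt')$ modulo $\Gm$ are in bijection with a subset of the (finitely many) isotopy classes of meridians coming from $N'$ — more precisely, an essential component gives a nonseparating-type configuration detectable on the compact quotient $S$, and there are only finitely many such up to the $\Gm$-action. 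I would make this precise by a counting/compactness argument on $S$: $N = f\iv(\Nt')/\Gm$ with inessential loops discarded is a properly embedded $1$-manifold in the compact surface $S$, hence automatically has finitely many components.

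For (i), that $N \ne \emp$: suppose $N = \emp$, so every component of $f\iv(\Nt')$ is inessential in $\St$, bounding a disk. Since $N'$ is nonempty and each loop of $\Nt'$ separates $\Og$ (it is a meridian on $\pt H'$, and more to the point $S' \sm N'$ is a disjoint union of holed spheres), lifting through the $\td\rho$-equivariant developing map $f$ and using that each component of $f\iv(\mt')$ bounds a disk would force $f$ to factor, region by region, so that the developing image misses nothing essential — I would derive from this that $f$ is a homeomorphism onto $\Og$, i.e. $C$ is basic (Lemma~\ref{basicSchottky}), and then the developed image of a lift of a component of $S' \sm N'$ is a holed sphere, contradicting the fact that a holed sphere in $\Og$ cannot be a fundamental-domain piece unless $N'$ itself separated $S'$ into pieces — but then the corresponding meridians of $N'$ would pull back to essential loops. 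The cleanest route is: if $N = \emp$, then each component of $f\iv(\Nt')$ is inessential, so $f$ has the path-lifting property across all of $\Nt'$ and hence across all the holed-sphere pieces; but the holonomy around a genuine boundary loop of such a piece is loxodromic in $\Gm$ (it is a nontrivial element, being a boundary loop of a holed sphere in the handlebody picture whose complement $N'$ consists of meridians), and a loxodromic-holonomy loop whose lift bounds a disk in $\St$ is impossible since $f$ would then be equivariant under a loxodromic element on a bounded region. I expect this to be the main obstacle and will need to argue it carefully, essentially reproving the contrapositive of ``basic $\Rightarrow$ $f$ is a homeomorphism onto $\Og$'' in this relative setting.

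Finally, for (iii), let $Q$ be a component of $S \sm N$, and let $\Qt$ be a lift of $Q$ to $\St$ stabilized by $\po(Q)$; I must check $\rho \cc i\at$ is trivial on $\po(Q)$, that the developing map $f|_{\Qt}$ extends over punctures with the right ramification behavior, and that its image support is a component $\widetilde D$ of $\Og \sm \Nt'$. Triviality of $\rho|_{\po(Q)}$ follows because every loop in $Q$ is homotopic in $S$ to a product of meridian loops of $N$ together with loops in $Q$, and by Lemma~\ref{multiloop} any essential loop of $Q$ that developed to a loxodromic element would have its lift fail to bound — but the boundary of $Q$ consists of meridians, so $\po(Q) \to \Gm$ has image a subgroup of a free group whose generators all map trivially near $Q$; more directly, $\Qt$ is a single lift (because $\rho|_{\po(Q)}$ is trivial precisely when no loop of $Q$ translates $\St$), and I would prove this by showing $f(\Qt)$ lands in one complementary component $\widetilde D$ of $\Og \sm \Nt'$: each boundary loop of $Q$ lifts to a loop of $f\iv(\Nt')$ mapping to a loop of $\Nt'$, and since $\Nt'$ separates $\Og$ into its complementary components (each bounded by loops of $\Nt'$), $f(\Qt)$ — being connected and with boundary on $\Nt'$ — lies in the closure of one such $\widetilde D$. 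Because the loops of $N'$ on $S'$ are meridians and $S'\sm N'$ is a union of holed spheres, each $\widetilde D$ is itself a holed sphere (it is a lift-piece of a holed-sphere component of $S'\sm N'$, embedded in $\Og$), giving a finite-type genus-zero surface $R = \widetilde D$ embedded in $\rs$. Then conditions (i)–(iv) of ``almost good'' for $C|Q$ are read off: (i) $\rho\cc i\at = \rho_{id}$ by the above; (ii) $f|_{\Qt}$ extends over the punctures of $Q$ (the collapsed inessential loops of $f\iv(\Nt')$) as branched points, since near such a loop $f$ is a branched cover onto a disk by the degree computation in \S\ref{graft}; (iii) punctures map to punctures of $R$ and boundary components of $Q$ cover boundary components of $R = \widetilde D$, because each such boundary loop is a component of $f\iv(\Nt')$ covering a loop of $\Nt'$; and (iv) $f(P_Q)\cup f(\pt Q)$ is disconnected because $\widetilde D$ is a holed sphere with at least two boundary/puncture classes (its Euler characteristic being negative, inherited from the holed-sphere pieces of $S'\sm N'$). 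Hence $C|Q$ is an almost good holed sphere supported on $\widetilde D$, a component of $\Og\sm\Nt'$, as required.
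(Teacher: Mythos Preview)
Your proposal has genuine gaps in all three parts. For (ii), the finiteness of $N$ hinges on showing that $\Nt$ is \emph{locally finite} in $\St$; the issue is at points $x\in\St$ with $f(x)\in\Ld(\Gm)$, where infinitely many loops of $\Nt'$ accumulate. The paper handles this by observing that near such an $x$ one can choose a disk $U$ with $f|_U$ a homeomorphism onto a disk bounded by a loop of $\Nt'$, so that $f\iv(\Nt')\cap U$ consists entirely of nested inessential loops and hence $\Nt\cap U=\emp$. Your line ``a properly embedded $1$-manifold in the compact surface $S$, hence automatically has finitely many components'' is the right finishing step, but you have not shown proper embedding, and the preceding sentence about a bijection with ``isotopy classes of meridians coming from $N'$'' is not justified (and need not hold).

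The more serious gap is that you are missing the key lemma: for every component $Q$ of $S\sm N$, the restriction $\rho|_{\po(Q)}$ is trivial. The paper proves this by a mod-$2$ intersection argument: if $\rho(\ap)\neq id$ for some $\ap\in\po(Q)$, its lift $\td\ap\subset\St$ is a bi-infinite curve whose $f$-image runs between the two fixed points of $\rho(\ap)$; by hypothesis (II) some loop $\mu'$ of $\Nt'$ separates those fixed points, so $f\circ\td\ap$ crosses $\mu'$ an odd number of times, and since inessential loops of $f\iv(\mu')$ contribute only even intersection with $\td\ap$, the curve $\td\ap$ must meet an essential loop of $f\iv(\mu')\subset\Nt$, contradicting $\ap\subset Q$. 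This lemma immediately gives (i): each $Q$ lifts homeomorphically to the planar surface $\St$, hence has genus zero, and since $S$ has genus $\ge 2$ we must have $N\neq\emp$. Your attempted route to (i) via ``$N=\emp\Rightarrow C$ basic'' is both unproven and not a contradiction (the basic structure is a legitimate Schottky structure, and in fact has $N\cong N'\neq\emp$), and your remark that ``the holonomy around a genuine boundary loop of such a piece is loxodromic'' is false: boundary loops of the pieces of $S'\sm N'$ are exactly the meridians $N'$, with trivial holonomy. For (iii), your claim that $f(\Qt)$, ``being connected with boundary on $\Nt'$, lies in the closure of one component $\widetilde D$'' does not follow, since $f$ is only an immersion and $f(\Qt)$ may well cross $\Nt'$ repeatedly; the paper pins down the support by another mod-$2$ intersection argument (showing that if two boundary loops $\ell,m$ of $\Qt$ mapped to boundary loops of different components of $\Og\sm\Nt'$, an arc in $\Qt$ joining them would be forced to cross $\Nt$). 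Also, $Q$ here is a holed sphere with no punctures, so condition (ii) of ``almost good'' is vacuous; your discussion of punctures and branched points is extraneous. Finally, condition (iv) requires showing $f(\pt\Qt)$ hits at least two boundary loops of the support $R$, which needs its own argument (if $f(\pt\Qt)$ were a single loop $\ell'$, a component of $\Qt\sm f\iv(\ell')$ meeting $\pt\Qt$ would cover a disk, contradicting essentiality).
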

Let $\Ct = (f, \rho_\id)$ denote the projective structure on $\St$ obtained by lifting $C$, where $\rho_{id}\cn \po(\St) \to \psl$ is the trivial holonomy.
Then Proposition \ref{N} immediately implies
\begin{corollary}\label{SuppAreAdjacent}
(i) Let  $\td{Q}$ be a component of $\St \sm \Nt$.
Then there is a unique component $R$ of  $\Og \sm \Nt'$  such that $\Ct|\td{Q}$ is an almost good holed sphere supported on $R$ (via $f$).
(ii) Let $\ell$ be a loop of $\Nt$ and let $\td{Q}_1$ and $\td{Q}_2$ be the adjacent components of $\St \sm \Nt$ along $\ell$.
Then the support of the almost good structures  $\Ct|\td{Q}_1$ and $\Ct|\td{Q}_2$ are the adjacent components of $\Og \sm \Nt'$ along the loop $f(\ell)$.
\end{corollary}
The rest of the section is the proof of the proposition. 

\begin{proof}[Proof of Proposition \ref{N} (ii)]
First we show that,  for each $x \in \St$, there is a neighborhood $U (= U_x)$ of $x$ such that $U \cap \Nt$ is either the empty set or a single arc properly embedded in $U$. 
Since $f$ is a local homeomorphism, $x$ has a neighborhood $U$ that $f$ homeomorphically takes onto its image  $f(U) =:  U'$.
The codomain $\rs$ of $f$ is decomposed into   $\Ld$,  $\Nt'$ and  $\Og \sm \Nt'$. \\
{\it Case 1.} Suppose that $f(x) \in \Ld$.
Then, by Assumption (II), we can take $U$ so that $U'$ is a disk bounded by a loop of $\Nt'$.    
Then, since $f| U$ is a homeomorphism onto its image,  $\Nt' \cap U'$ is a union of infinitely many disjoint loops.
Since $U'$ is a disk,  those loops are  all inessential; thus $\Nt \cap U = \emptyset$.\\
{\it Case 2.} 
Suppose that $f(x) \in \Nt'$. 
Then, since each loop of $\Nt$ is isolated, we can take $U$ so that $U' \cap \Nt'$ is a single arc properly embedded in $U'$.
Accordingly $f\iv(\Nt') \cap U$ is a single arc properly embedded in $U$. 
Therefore $\lf f\iv(\Nt')\rf \cap U$ is either the arc or the empty set.\\
{\it Case 3.} 
Last suppose that $f(x) \in \Og \sm \Nt'$.
Then we can take $U$ so that $f(U)$ is disjoint from $\Nt'$.
Then $f\iv(\Nt') \cap U = \emptyset$ and therefore  $\lf f\iv(\Nt')\rf \cap U = \emptyset$.

The set of all $U_x$ for all $x \in \St$  is a cover of $\St$.
Via the covering map, this cover descends to a  cover of $S$.
Since $S$ is compact, there is  a finite subcover of $S$. 
Since the $\Gm$-action preserves $\Nt$ and each $U_x$ contains at most an single arc, $N$ consists of at most finitely many loops. 

Since $\Nt$ is a union of essential loops and the holonomy of $\Ct$ is trivial, each loop of $\Nt$ is a meridian loop on $\Ct$. 
Since $\Nt$ overs $N$, each loop of $N$ is a meridian loop on $C$.
\end{proof}

\begin{lemma}\label{pi(Q)}
Under the assumptions in Proposition \ref{N}, for every component $Q$ of $S \sm N$, the restriction of $\rho$ to $\pi_1(Q)$ is the trivial representation.
\end{lemma}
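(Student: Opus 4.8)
The plan is to deduce this from Proposition \ref{N}(iii) together with the structure of $N$ as a multiloop of meridians. Fix a component $Q$ of $S \sm N$. By Proposition \ref{N}(iii), $C|Q$ is an almost good holed sphere; in particular, by condition (i) in the definition of an almost good projective structure (\S \ref{AlmostGood}), the holonomy of $C|Q$ is the trivial representation $\rho_{id}$. But by the definition of the restriction of a projective structure to a subsurface (\S ``Restriction of projective structures to subsurfaces''), the holonomy of $C|Q$ is exactly $\rho \cc i\at$, where $i\at\colon \po(Q) \to \po(S)$ is the map induced by the inclusion $Q \hookrightarrow S$. Hence $\rho \cc i\at = \rho_{id}$, which is precisely the assertion that the restriction of $\rho$ to $\pi_1(Q)$ (i.e., to the image of $i\at$) is trivial.

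The only point that needs a little care is the identification of ``the restriction of $\rho$ to $\pi_1(Q)$'' with $\rho \cc i\at$: the inclusion-induced map $i\at$ need not be injective a priori, so one should phrase the conclusion as $\rho(i\at(\gamma)) = id$ for every $\gamma \in \pi_1(Q)$, which is what the triviality of the holonomy of $C|Q$ gives directly. Alternatively, and more concretely, one can argue that every essential loop $\ell$ on $Q$ is freely homotopic in $S$ to a loop disjoint from $Q$'s interior or is carried by the holed-sphere structure: since $Q$ is a holed sphere, $\pi_1(Q)$ is generated by its boundary loops, each of which is a component of $N$ and hence a meridian, so $\rho$ kills each generator. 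Either route suffices.

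I do not expect a real obstacle here: the lemma is essentially a restatement of part of Proposition \ref{N}(iii) after unwinding the definitions, so the ``hard part'' is merely bookkeeping — making sure the notion of almost good structure used in Proposition \ref{N} does carry condition (i) (trivial holonomy), and that this matches the definition of restriction of $C$ to $Q$. Once those definitions are lined up, the conclusion is immediate.
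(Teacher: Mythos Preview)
Your proposal is circular. In the paper, Lemma \ref{pi(Q)} is a \emph{step in the proof} of Proposition \ref{N}: part (iii) is proved only after Lemma \ref{pi(Q)} and Lemma \ref{g=0} are established, and both of those proofs invoke Lemma \ref{pi(Q)} explicitly. So you cannot cite Proposition \ref{N}(iii) to deduce Lemma \ref{pi(Q)}.

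Your alternative route has the same problem. You write ``since $Q$ is a holed sphere, $\pi_1(Q)$ is generated by its boundary loops''. But at this stage nothing guarantees that $Q$ has genus zero: $N$ is only known to be a finite union of meridians (Proposition \ref{N}(ii)), and a multiloop of meridians on a genus-$g$ surface can perfectly well leave complementary components of positive genus. The paper establishes that $Q$ is planar precisely via Lemma \ref{g=0}, whose proof uses Lemma \ref{pi(Q)} to lift $Q$ homeomorphically into the planar cover $\St$. So assuming $Q$ is a holed sphere is again assuming what you need to prove.

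The paper's actual argument is a direct intersection-number contradiction that does not presuppose anything about the topology of $Q$. Suppose some $\alpha\in\pi_1(Q)$ has $\rho(\alpha)\neq id$; then $\rho(\alpha)$ is loxodromic, so a lift $\tilde\alpha\subset\St$ is a bi-infinite curve and $f|_{\tilde\alpha}$ runs between the two fixed points of $\rho(\alpha)$. By assumption (II) there is a loop $\mu'$ of $\Nt'$ separating those fixed points, hence $f|_{\tilde\alpha}$ meets $\mu'$ an odd number of times. Since inessential loops of $f^{-1}(\mu')$ meet $\tilde\alpha$ evenly, $\tilde\alpha$ must cross $\Nt$, contradicting $\alpha\subset Q$. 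That is the content you are missing.
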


\begin{proof}
For $\ap \in \pi_1(Q)$, let $\gm = \rho(\ap) \in \Gm$.
We regard $\ap$ also as an oriented closed curve on $Q$ representing it.
Suppose that $\gm \neq id$.
Then $\gm$ is a loxodromic element. 
Therefore $\ap$ lifts a bi-infinite simple curve $\td{\ap}$ on $\St$ invariant under the infinite cyclic group $\langle \ap \rangle$ generated by $\ap$.
Let $p_1, p_2 \in \rs$ be the fixed points of the loxodromic element $\gm$.
Then $f|_{\td{\ap}} \,$ is a $\,\rho|_{\langle \ap \rangle}$-equivariant curve connecting $p_1$ and $p_2$.
By Assumption (II), there is a loop $\mu'$ of $\Nt'$ separating $p_1$ and $p_2$. 
By a small homotopy of $\ap$ on $Q$ if necessary, we can assume that the curve $f|_{\td{\ap}}$ is disjoint from the points $p_1$ and $p_2$ and it transversally intersects the loop $\mu'$.  
Since $f|_{\td{\ap}}$ is $\rho|_{\langle \ap \rangle}$-equivariant and $p_1, p_2$ are contained in the different components of $\rs \sm \mu'$,  $~f|_{\td{\ap}}$ intersects $\mu'$ an odd number of times.  
Therefore $\td{\ap}$ transversally intersects $f\iv(\mu')$ an odd number of times. 

By Lemma \ref{multiloop},  $~f\iv(\mu')$ is a multiloop on $\St$.  
Furthermore, by Proposition \ref{N} (ii), each component of $f\iv(\mu')$ is either  a loop of $\Nt$ or an inessential loop on $\St$. 
The curve $\td{\ap}$ is  $\langle \ap \rangle$-invariant and properly immersed in $\St$.
Thus $\td{\ap}$  is unbounded. 
Therefore,  each inessential loop of $f\iv(\mu')$ intersects $\td{\ap}$ an even number of times. 
Since $\td{\ap}$ intersects $f\iv(\mu')$ an odd number of times, $\td{\ap}$ must intersect at least one essential loop of $f\iv(\mu')$.
Thus $\td{\ap}$ intersects $\Nt$, and therefore $\ap$ transversally intersects $N$.
This contradicts the assumption that $\ap$ is contained in $Q$.
Hence $\rho(\ap) = id$ for all $\ap \in \pi_1(Q)$.
\end{proof}

%Proposition: Genus Zero %
Proposition \ref{N} (i) immediately follows from

\begin{lemma}\label{g=0}
Every component of $S \sm N$ has genus zero.
\end{lemma}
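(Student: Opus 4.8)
The plan is to suppose, for contradiction, that some component $Q_0$ of $S \sm N$ has positive genus, and to derive a contradiction with the fact that $\St$ is planar together with the structure of $\Nt$ established in Proposition \ref{N}. First I would observe that, by Lemma \ref{pi(Q)}, the restriction of $\rho$ to $\po(Q_0)$ is trivial, so the inclusion $Q_0 \hookrightarrow S$ lifts to an embedding $Q_0 \hookrightarrow \St$ (each component $\Qt_0$ of $\pg\iv(Q_0)$ maps homeomorphically to $Q_0$, since $\po(Q_0)$ maps trivially to $\Gm = \po(S)/\ker\rho$, the deck group of $\pg$). Thus $\St$ would contain an embedded copy of $Q_0$, a surface of positive genus with nonempty boundary (the boundary being nonempty because $N \neq \emp$ cannot be avoided — but that's what we want to prove, so I should be careful: actually $N \neq \emp$ is Proposition \ref{N}(i), which this lemma is being used to prove, so I should not assume it; if $N = \emp$ then $S \sm N = S$ itself has genus $g \geq 2 > 0$, and the same argument applies with $Q_0 = S$, giving an embedding of the closed genus-$g$ surface $S$ into the planar surface $\St$, which is absurd). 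In either case, an embedded positive-genus subsurface of the planar surface $\St \cong \rs \sm \Ld(\Gm)$ is impossible, since any subsurface of a planar surface is planar.

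More carefully, the key steps in order are: (1) fix a component $Q_0$ of $S \sm N$ and apply Lemma \ref{pi(Q)} to conclude $\rho|_{\po(Q_0)} = \ri$; (2) use the definition of the minimal cover — $\pg\colon \St \to S$ is a covering with deck group $\Gm \cong \po(S)/\ker\rho$ — to show that since $i_*\po(Q_0) \st \ker\rho$, each component of $\pg\iv(Q_0)$ projects homeomorphically onto $Q_0$, hence $Q_0$ embeds in $\St$; (3) recall from \S\ref{pullback} that $\St$ is homeomorphic to $\rs \sm \Ld(\Gm)$, hence planar; (4) invoke the elementary topological fact that every subsurface of a planar surface is planar, so $Q_0$ has genus zero, contradicting the assumption of positive genus.

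The main obstacle, such as it is, lies in step (2): making rigorous the claim that a subsurface whose fundamental group injects into the kernel of the covering's deck-transformation homomorphism lifts to a disjoint union of homeomorphic copies. This is standard covering-space theory — the relevant cover restricted over $Q_0$ is the one corresponding to $i_*\po(Q_0) \cap \ker\rho = i_*\po(Q_0)$, which is the trivial-monodromy cover, hence a disjoint union of copies of $Q_0$ — but one must be slightly careful because $\St \to S$ is not the universal cover and $Q_0$ need not be $\po_1$-injective a priori. In fact $Q_0 \st S$ is $\po_1$-injective since $S$ is a surface and $Q_0$ an incompressible subsurface (its boundary consists of essential loops of $N$), so this subtlety evaporates. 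The genuinely substantive input is already packaged: planarity of $\St$ from \S\ref{pullback} and the triviality of $\rho|_{\po(Q_0)}$ from Lemma \ref{pi(Q)}. So the proof is short, and I would simply assemble these pieces and note that Proposition \ref{N}(i) follows because a nonempty surface all of whose pieces under the $N$-decomposition have genus zero — combined with the fact that $S$ itself has genus $g \geq 2$, so $S \sm N \neq S$ forces $N \neq \emp$ — yields $N \neq \emp$.
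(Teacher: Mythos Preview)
Your proof is correct and follows essentially the same route as the paper: use Lemma \ref{pi(Q)} to get $i_*\pi_1(Q) \subset \ker\rho$, lift $Q$ homeomorphically into the planar surface $\St$, and conclude genus zero. Your extra care about the $N = \emptyset$ case and about $\pi_1$-injectivity is harmless but unnecessary --- the lifting criterion only needs $i_*\pi_1(Q) \subset \ker\rho$, not injectivity of $i_*$, and the paper's argument already covers $N = \emptyset$ since then $Q = S$ would lift to $\St$, forcing $S$ to be planar.
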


\begin{proof}
Let $Q$ be a component of $S \sm N$.
By Lemma \ref{pi(Q)}, $~\pi_1(Q) \st \ker \rho$.
Thus, by the definition of $\St$, $~Q$ homeomorphically lifts to a component $\td{Q}$ of $\St \sm \Nt$.
Since $\St$ is planar, $Q$ has genus zero. 
\end{proof}

What is left is: 

\begin{proof}[Proof  Proposition \ref{N} (iii)] 
Let $Q$ be a component of $S \sm N$.
By Lemma \ref{g=0}, $~Q$ is a holed sphere.
Since $Q$ is  bounded by essential loops on $S$, $~Q$ has at least two boundary components. 
By Lemma \ref{pi(Q)}, the holonomy of $C|Q$ is  trivial.  
Thus we can isomorphically lift $Q$ to a component $\td{Q}$ of $\Ct \sm \Nt$ (unique up  to an element of $\Gm$).
Then the developing map of $C|Q$  is the restriction of $f$ to $\td{Q}$. 

For the first assertion of $(iii)$, it suffices to show that there is a component $R$ of $\Og \sm \Nt'$ such that,  via $f$, each boundary component $\ell$ of $\Qt$ covers a boundary component of $R$.
Clearly $f(\ell)$ is a loop of $\Nt'$.
Since $f$ is a local homeomorphism, there is  a small neighborhood of $\ell $ in $\td{Q}$ so that its $f$-image is contained in  a component $R$ of $\Og \sm \Nt'$.
We show that, if $m$ is another boundary component of $\td{Q}$, then the loop $f(m)$ is also a boundary component of $R$.
Let $\ap\colon [0,1] \to \td{Q}$ be a property embedded arc connecting $\ell $ to $m$, so that $\ap(0) \in \ell$ and $\ap(1) \in m$.
We can assume that $f \cc \ap$ is transversal to $\Nt$, if necessary by a small isotopy of $\ap$.
Suppose that $f(m)$ is  {\it not} a boundary component of $ R$.
Then $f(m)$ is contained in the interior of a component $D$ of $\rs \sm R$.
Then $D$ is a topological disk bounded by a boundary component of $R$. 

 {\it Case 1.} Suppose that $\pt D =: n'$ is different from $f(\ell)$.
Then  $f(\ell)$ and $f(m)$ are contained in the distinct components of $\rs \sm  n'$.
Then the curve $f \circ \ap$ is transversal to $n'$. 
Since $f \circ \ap$ is a curve on $\rs$ connecting $f(\ell)$ to $f(m)$,
it intersects $n'$ an odd number of times. 
Therefore $\ap$ transversally intersects $f\iv(n')$ an odd number of times. 
Then, similarly to the proof of Lemma \ref{pi(Q)}, we can deduce that $\ap$ transversally  intersects $\Nt$.
This contradicts the assumption that $\ap$ is in $\td{Q}$.

{\it Case 2.} Suppose that $f(\ell)$ bounds $D$.
Since $\ap$ is property embedded in $\Qt$,  the curve $f \cc \ap\cn [0,1] \to \rs$ takes  $(0, \e]$ into $R$ for sufficiently small $\e > 0$.
Then the point $f \cc\ap(\e)$ and the loop $f(m)$ are contained in the distinct components of  $\rs \sm f(\ell)$.
Then the restriction of $f \circ \ap$ to $[\e, 1]$ transversally intersects $f(\ell)$ an odd number of times.
Then we can  similarly deduce a contradiction that $\ap$ must transversally intersects $\Nt$.  

For the second assertion of $(iii)$, we show that $f(\pt \td{Q})$ consists of at least two boundary components of $R$.
Suppose that $f(\pt \td{Q})$ is a single boundary component $\ell '$ of $R$.
Then $f|_{\td{Q}}\colon \td{Q} \to \rs$ has the path lifting property on $\rs \sm \ell'$.
Since $f\iv(\ell')$ is a union of disjoint loops on $\St$, $~f\iv(\ell') \cap \td{Q}$ is a union of disjoint loops on $\td{Q}$.
Let $P$ be a component of $\td{Q} \sm f\iv(\ell')$ such that $P$ and $\td{Q}$ share a boundary component.
Then this common boundary component is an essential loop on $\St$.
On the other hand $f|_P$ is a covering map from $P$ onto a component of $\rs \sm \ell'$, which is a disk.
Thus $P$ is also a disk, and therefore the boundary component of $P$ is inessential on $\St$, which is a contradiction
\end{proof}

\section{Schottky structures and  handlebodies with cellular structures.}\label{CellularStructures}

(The techniques used in this this section has great influence from a paper by Oertel \cite{Oertel-02}.)
We carry over our notation from \S \ref{IntoAlmostGood}. 
Let $\Og_0$ be a standard fundamental domain of the action of the real Schottky group $\Gm$  on the domain of discontinuity $\Og$, i.e.\  $\Og_0$ is the complement of   disjoint $2g$ round disks.
We in addition assume that $\Og_0$ is a closed subset of $\rs$.  
The boundary components of $\Og_0$ are paired up by generators $\gm_1, \gm_2, \dt, \gm_g$ of $\Gm$.
In addition $\pt \Og_0$ is a multiloop in $\Og$, and $\Gm (\pt \Og_0) =: \Lt'$  is a $\Gm$-invariant multiloop splitting $\Og$ into the fundamental domains  $\gm \Og_0$ with $\gm \in \Gm$.

Recall that  the projective structure $C' = \Og/\Gm$ is the boundary surface of the genus-$g$ hyperbolic handlebody $H' =  (\h^3 \cup \Og) / \Gm$.
Let $L' = \Lt' / \Gm$.
 Then $L'$ is a multiloop on $C'$ that is the union of $g$ disjoint non-parallel loops bounding meridian disks in $H'$, so that $C' \sm L'$ is a $2g$-holed sphere. 
Let $L$  be the pullback of $L'$ via $f$, which is a multiloop on $S$ (see \S \ref{pullback}).
Thus $L$ satisfies the assertions of Proposition \ref{N}.   

Let $\Lt := \lf f^{-1}(\Lt') \rf$.
Then $\Lt$ is the total lift of $L$  to $\St$,  by the definition of $L$,

\subsection{Cellular structures on handlebodies.}\label{cellular}

For a subset $X$ of $\h^3 \cup \Og$, let $Conv(X)$ denote the convex hull of $X$ in $\h^3 \cup \Og$.
Then, for each loop $\ell $ of $\Lt'$, $~Conv(\ell)$ is a copy of $\overline{\h^2}$, the union of $\h^2$ with its ideal boundary  $\pt_\infty \h^2$. 
Let $\Dlt'$ denote the union of  $Conv(\ell)$ over all  loops $\ell $ of $\Lt'$.
Then $\Dlt'$ is a multidisk properly embedded in $\h^3 \cup \Og$, and $\Dlt'$ divides $\h^3 \cup \Og$ into connected fundamental domains of the action $\Gm \curvearrowright \h^3 \cup \Og$.
Let $\Dl' = \Dlt' / \Gm$.
Then $\Dl'$ is the disjoint union of $g$ copies of  $\ol{\h^2}$ in $H'$ bounded by $L'$, and thus  $\Dl'$ cuts $H'$ into a topological 3-disk.
Thus, we can regard the pair $(H',\Dl')$ as a handlebody with a cellular structure consisting of $g$ 2-cells, the disks of $\Dl'$, and one 3-cell, the closure of $H' \sm \Dl'$.  

By Proposition \ref{N} (iii),  each component of $S \sm L$ is a sphere with at least 2 holes.
Thus we let $H$ be a  handlebody of genus $g$ and identify $S$ and $\pt H$ by a homeomorphism so that each loop of $L$ on $S$ is a meridian loop of $H$. 
Let $\Dl$ be the multidisk bounded by $L$ and embedded in $H$ properly.
Then $\Dl$ divides $H$ into finitely many $3$-disks.
Thus, similarly, we regard $(H,\Dl)$ as the handlebody $H$ with the natural cellular structure: its 2-cells are the disks of $\Dl$ and 3-cells are the closures of components of $H \sm \Dl$.
Let $\Ht$ denote the universal cover of $H$, so that $\pt \Ht = \St$.
Let $\Dlt$ denote the total lift of $\Dl$ to $\Ht$, which is a $\Gm$-invariant multidisk bounded by $\Lt$.

In \S \ref{CellularStructures}, we prove:

\begin{proposition}\label{e}
There exists an embedding $\e\colon (H , \Dl)\to (H', \Dl')$ such that
\begin{itemize}
\item[(i)]  $~\e$ embeds each 2-cell  of $(H,\Dl)$ into a 2-cell of $(H', \Dl')$ and each 3-cell  of $(H,\Dl)$ into  a 3-cell of $(H', \Dl')$,
\item[(ii)] $H'$ minus the interior of ${\rm Im}(\e)$ is homeomorphic to the product $S \times [0,1]$, and
\item[(iii)] if $\ell $ is a loop of $\Lt$, then $\td{\e}(\ell)$ is contained in   $Conv(f(\ell)) \cong \ol{\h^2}$, where $\et\cn(\Ht, \Dlt) \to (\h^3 \cup \Og, \Dlt')$ is a lift of $\e$.
\end{itemize}
\end{proposition}
{\it Remarks:}
In (ii),  $S \times \{0, 1\}$ corresponds to $\pt H' \sq \e(\pt H)$.
In (iii),  the lift $\et$ exists since $\e$ is $\po$-injective by (ii).
It is straightforward to see that (iii) can  be equivalently replaced by: (iii') $\rho$ is equal to  the induced homomorphism $(\e|_S)_\ast\colon \po(S)  \to \po(H') = \Gm$.

By Proposition \ref{e} (i), (iii), and Proposition \ref{N} (iii), we immediately obtain
\begin{corollary}\label{eCorollary}
If $R$ is the closure of a component of $\St \sm \Lt$, then $\et$ properly embeds $R$ into the convex hull of  ${\it Supp}_f(R)$ in $\h^3$.
\end{corollary}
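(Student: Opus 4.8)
The plan is to cut the statement down to a single cell of the cellular handlebody $(\Ht,\Dlt)$ and push it across $\et$ using Proposition \ref{e}. First, since $R$ is a component of $\St\sm\Lt$ with $\St=\pt\Ht$ and $\Dlt\cap\St=\Lt$, each interior point of $R$ has a neighbourhood in $\Ht$ disjoint from $\Dlt$ and hence lying in a single $3$-cell of $(\Ht,\Dlt)$; by connectedness this cell is independent of the point, call it $\td V$, and the closed face $R$ then lies in $\ol{\td V}$. Moreover $R$ is compact: as in the proof of Lemma \ref{g=0} it is a homeomorphic copy of a component of $S\sm L$, and the latter is compact because $L$ is a finite multiloop on the closed surface $S$.

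Next, Proposition \ref{e}(i) sends $\td V$ into a $3$-cell of $(\h^3\cup\Og,\Dlt')$, that is, into one of the closed fundamental domains of the $\Gm$-action on $\h^3\cup\Og$; by the way $\Dlt'$ is built in \S\ref{cellular} from the hyperbolic planes $Conv(\ell)$, $\ell\in\Lt'$, such a closed fundamental domain is the convex hull $Conv(\ol{F'})$ of its ideal face $\ol{F'}$, where $F'$ is the component of $\Og\sm\Lt'$ forming that face and the $Conv(\ell)$, $\ell\in\pt F'$, are its bounding walls. Hence $\et(R)\st\et(\ol{\td V})\st Conv(\ol{F'})$. Now $F'$ is a support of $\Ct|R$ --- which is almost good by Proposition \ref{N}(iii): for each component $\ell$ of $\pt R$, Proposition \ref{e}(iii) gives $\et(\ell)\st Conv(f(\ell))$, and since $\et(\ell)$ is a connected subset of $Conv(\ol{F'})$ lying on $\Dlt'$ it sits on one of the pairwise disjoint walls $Conv(C)$, $C\in\pt F'$; being also in $Conv(f(\ell))$ this forces $f(\ell)=C\in\pt F'$. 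Thus $f(\pt R)\st\pt F'$, which together with the path-lifting property of $f$ in $\Og$ exhibits $F'$ as a support of $\Ct|R$, so $\ol{F'}\st Supp_f(R)$ and therefore $Conv(\ol{F'})\st Conv(Supp_f(R))$. Combining the inclusions gives $\et(R)\st Conv(Supp_f(R))$ --- this is the content of the corollary.

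It remains to see that $\et|_R$ is \emph{proper}. Since $R$ is compact and $\et$ is an embedding, $\et|_R$ is a homeomorphism onto the closed set $\et(R)$, so it only remains to match boundary with boundary and interior with interior. Boundary is already handled: for $\ell\st\pt R$, $f(\ell)$ is a boundary circle of $Supp_f(R)$ and $\et(\ell)\st Conv(f(\ell))$, a hyperbolic wall of $\pt\,Conv(Supp_f(R))$. For the interior, Proposition \ref{e}(ii) makes $\e(\pt H)$ disjoint from $\pt H'$, so $\et(\St)$ is disjoint from $\Og$ and $\et(int(R))\st\h^3$ avoids the ideal face $Supp_f(R)$ of $Conv(Supp_f(R))$; it remains to check that $\et(int(R))$ avoids the walls $Conv(f(\ell))$ as well, and this is the step I expect to demand real care. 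The ingredients should be: injectivity of $\et$; the disk $\td{D}_{\ell}$ of $\Dlt$ with boundary $\ell$ meets $\St$ only in $\ell$, so $\et(x)\notin\et(\td{D}_{\ell})$ for $x\in int(R)$; by Proposition \ref{e}(i),(iii) $\et(\td{D}_{\ell})$ is a $2$-disk in $Conv(f(\ell))\cong\h^2$ bounding $\et(\ell)$; and $\et(\ol{\td V})$ lies on one side of this wall. Combining these should rule out $\et(x)\in Conv(f(\ell))$, giving $\et(int(R))\st int(Conv(Supp_f(R)))$. (One bookkeeping caveat: $Supp_f(R)$ may contain limit points of $\Gm$, so $Conv(Supp_f(R))$ should be read as the hull taken in $\hb$ intersected with $\h^3\cup\Og$; one sidesteps this by carrying $Conv(\ol{F'})$ through the whole argument and invoking $\ol{F'}\st Supp_f(R)$ only at the end.)
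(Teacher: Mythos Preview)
Your argument is correct and follows the same route the paper has in mind: the paper simply declares the corollary ``immediate'' from Proposition~\ref{e}(i),(iii) and Proposition~\ref{N}(iii), and you have unpacked exactly those three ingredients. One remark: the part you flag as needing ``real care'' --- showing $\et(int(R))$ avoids the walls $Conv(f(\ell))$ --- is in fact the cleanest step, and follows directly from Proposition~\ref{e}(i) without the auxiliary disk argument you sketch. The open $3$-cell $\td V$ of $(\Ht,\Dlt)$ contains $int(R)$, and Proposition~\ref{e}(i) sends $\td V$ into an open $3$-cell of $(\h^3\cup\Og,\Dlt')$, which by definition is a component of $(\h^3\cup\Og)\sm\Dlt'$ and hence disjoint from every disk of $\Dlt'$; in particular $\et(int(R))\cap Conv(f(\ell))=\emptyset$ for every $\ell$. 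Your caveat about limit points in $Supp_f(R)$ is well taken and is handled exactly as you suggest: carry $Conv(\ol{F'})$ through and pass to $Conv(Supp_f(R))$ only at the end via $F'\subset Supp_f(R)$.
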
  
%%%% Dual Graphs
\subsection{Dual graphs of cellular handlebodies.}\label{DualGraph}

Let $(M, \Dl_M)$ be a pair of a 3-manifold $M$ with boundary and a locally finite multidisk $\Dl_M = \sq_{i \in I} D_i$ properly embedded in $M$. 
Then there is a graph dual to this pair:
Pick  pairwise disjoint regular neighborhoods $N_i$ of the disks $D _i$ ($i \in I$) so that $N_i$ are homeomorphic to $\D^2 \times [-1,1]$ and $N_i \cap \pt M$ are homeomorphic to $\s^1 \times [-1, 1]$.
Collapse each $\D^2 \times \{x\}$ to a single point for each $i \in I$ and $x \in [-1,1]$ and each component of $M \sm \sq_i N_i$ also to a single point.
Then the resulting quotient space is a graph whose edges bijectively correspond to the regular neighborhoods $N_i~(i \in I)$ and vertices to  the components of $M \sm \sq_i N_i$.
This graph is called the {\it dual graph} of $(M, \Dl_M)$ and denoted by $(M, \Dl_M)\at$.
Accordingly, if $X$ is an edge or a vertex of the dual graph, let $X\at$ denote its corresponding disk $D_i$ or a component of $M \sm  \Dl_M$, and if $X$ is a disk of $\Dl_M$ or a component of $M \sm \Dl_M$, then let $X\at$ denote its corresponding edge or  vertex of the dual graph.
  
The dual graph $(M, \Dl_M)\at =: G_M$ can be embedded in $(M, \Dl_M)$, realizing the duality: 
Each vertex of $G_M$ is in the corresponding  component of $M \sm \sq_i N_i$ and each edge of $G_M$ transversally intersects $\Dl_M$ in a single point contained in the disk $D_i$ dual to it (see Figure \ref{dual}).

We in addition assume that $M$ is a handlebody of genus $g$ and $\Dl_M$ is a union of finitely many disjoint meridian disks in $M$ such that $\Dl_M$ splits $M$ into $3$-disks.
Particularly,  we have contracted such pairs $(H, \Dl)$ and $(H', \Dl')$ in \S \ref{cellular}.
Under this assumption,  $G_M$ is a finite connected graph and every vertex of $G_M$ has degree at least two.
Since $\Dl_M$ splits  $M$ into $3$-disks, we can choose the dual embedding of $G_M$ into $(M, \Dl_M)$ so that $M$ is a regular neighborhood of $G_M$.
Then $\po(G_M)$ is isomorphic to the  free group of rank $g$. 

Let $\td{M}$ be the universal cover of the handlebody $M$, and let $\Dlt_M$ be the total lift of the multidisk $\Dl_M$ to $\td{M}$.
Let $\Gt_M$ be the universal cover of the dual graph $G_M$.
Then $\Gt_M$ is the dual graph of $(\td{M}, \Dlt_M)$.
The action of $\po(G_M) = \po(M)$ commutes with the dual embedding $\Gt_M$ into $(\td{M}, \Dlt_M)$.
Thus,
for every $\gm \in \po(M)$ and every cell $X$ of $\Gt_M$ and $(\td{M}, \Dlt_M)$, we have $(\gm  X)\at = \gm  \cdot X\at$. 

Conversely, given a finite graph $K$, we can easily construct a pair of a handlebody $H_K$ and a union $\Dl_K$  of disjoint meridian disks in $H_K$ splitting $H_K$ into $3$-disks such that  $K$ is the dual graph of  $(H_K, \Dl_K)$.

\begin{figure}[htbp]

\includegraphics[width = 2in]{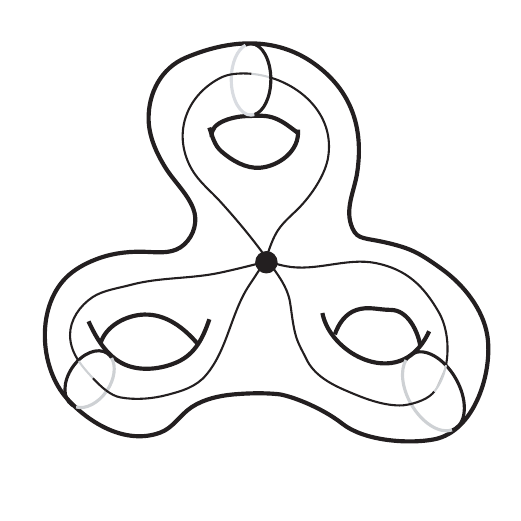}
\caption{The dual graph  embedded in $(H', D')$ in the case of $g = 3$.}\label{dual}
\end{figure}

\subsection{Graph map}\label{GraphHomomorphism}

A {\it graph map} is a simplicial map between graphs that maps each edge onto an edge (and each vertex to a vertex).
Let  $G$ and  $G'$ be the dual graphs of the cellular handlebodies $(H, \Dl)$ and $(H',\Dl')$, respectively, constructed  in \S \ref{cellular}.
Then $G'$ is  a bouquet of $g$ circles consisting of $g$ edges and one vertex (e.g. Figure \ref{dual}).

We will see that the developing map $f$ induces a graph map $\kp\colon G \to G'$.
Observe that the identification  $S = \pt H$ induces an isomorphism between  
$\po(H)$ and $\po(S)/\ker \rho$. 
Recall that the Schottky representation $\rho\cn \po(S) \to \Gm$ induces an isomorphism  $\rt\colon \po(S)/\ker(\rho) \to \Gm$.
Then this isomorphism can also be  regarded as an isomorphism from $\po(H)$  to $\po(H')$ and, by the duality, from $\po(G)$ to $\po(G')$.
 
Let $\Gt$ and $\Gt'$ be the universal covers of $G$ and $G'$, respectively.
Then we construct a $\rt$-equivariant  $\kpt\colon \Gt \to \Gt'$. 
For each vertex $v$ of $\Gt$, its dual $v\at$ is a component of $\Ht \sm \Dlt$, and $ v\at \cap \St$ is a  component of $\St \sm \Lt$.
Thus, by Corollary \ref{SuppAreAdjacent} (i),  the restriction of the projective structure $\Ct$ to $v\at \cap \St$ is an almost good holed-sphere supported on the fundamental domain $\gm \Og_0$ for a unique $\gm \in \Gm$.
Then $Conv(\gm \Og_0)$ is a component of $(\h^3 \cup \Og) \sm \Dlt'$.
Thus we define $\kpt(v)$ to be the vertex of $\Gt'$ dual to  $Conv(\gm \Og_0)$.
Then
\begin{lemma}\label{kpt}
(i) $\kpt$ is $\rho$-equivariant. 
(ii) Let $v_1$ and $v_2$ be the adjacent vertices of $\Gt$.
Then $\kpt(v_1)$ and $\kpt(v_2)$ are also adjacent vertices of $\Gt'$.
\end{lemma}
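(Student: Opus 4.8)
The plan is to prove (i) and (ii) by unwinding the definition of $\kpt$ on vertices and combining Corollary \ref{SuppAreAdjacent} with three compatibilities already recorded above: the developing map $f = dev(\Ct)$ is $\rt$-equivariant; the duality $x\mapsto x\at$ intertwines the $\Gm$-actions on $(\Ht,\Dlt)$ and on $(\h^3\cup\Og,\Dlt')$ with those on $\Gt$ and $\Gt'$, i.e.\ $(\gm x)\at = \gm\, x\at$; and $Conv$ commutes with the $\Gm$-action, since $\Gm$ acts on $\hb$ by isometries. (Statement (ii) is also exactly what is needed afterwards to extend $\kpt$ from vertices to edges, since $\Gt$ and $\Gt'$ are trees and hence a pair of adjacent vertices is joined by a unique edge.)

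For (i), I would fix a vertex $v$ of $\Gt$ and $\gm\in\Gm\cong\po(H)$, and set $\Qt := v\at\cap\St$, a component of $\St\sm\Lt$ supported via $f$ on some $\gm_0\Og_0$, so that $\kpt(v) = (Conv(\gm_0\Og_0))\at$ by definition. Using $(\gm v)\at = \gm\, v\at$ one gets $(\gm v)\at\cap\St = \gm\Qt$. Since $f(\gm x) = \gm f(x)$, postcomposition by $\gm$ carries $\Ct|\Qt$ to $\Ct|\gm\Qt$ up to the marking $\gm\colon\Qt\to\gm\Qt$, so $\Ct|\gm\Qt$ is supported on $\gm(\gm_0\Og_0) = (\gm\gm_0)\Og_0$; and this is the support singled out in Corollary \ref{SuppAreAdjacent}(i), because that support is unique. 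Hence $\kpt(\gm v) = (Conv((\gm\gm_0)\Og_0))\at = (\gm\, Conv(\gm_0\Og_0))\at = \gm\,(Conv(\gm_0\Og_0))\at = \gm\,\kpt(v)$, which is (i).

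For (ii), I would take $e$ to be the edge of $\Gt$ joining the adjacent vertices $v_1$ and $v_2$; it is the unique such edge because $\Gt$, as the universal cover of a connected graph, is a tree. Then $e\at$ is a disk of $\Dlt$, it is a common face of the $3$-cells $v_1\at$ and $v_2\at$, and $\ell := \pt(e\at)$ is a loop of $\Lt$. Hence $\Qt_1 := v_1\at\cap\St$ and $\Qt_2 := v_2\at\cap\St$ are distinct components of $\St\sm\Lt$ sharing $\ell$ as a boundary component; they share no other loop of $\Lt$, since a second common loop would yield a second edge of $\Gt$ between $v_1$ and $v_2$, so $\Qt_1$ and $\Qt_2$ are adjacent. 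Corollary \ref{SuppAreAdjacent}(ii) then gives that the supports of $\Ct|\Qt_1$ and $\Ct|\Qt_2$ are adjacent components $\gm_1\Og_0$ and $\gm_2\Og_0$ of $\Og\sm\Lt'$ sharing $f(\ell)$ as a boundary component. It follows that $Conv(\gm_1\Og_0)$ and $Conv(\gm_2\Og_0)$ are distinct components of $(\h^3\cup\Og)\sm\Dlt'$ sharing the disk $Conv(f(\ell))$ of $\Dlt'$ as a common face, so their dual vertices $\kpt(v_1) = (Conv(\gm_1\Og_0))\at$ and $\kpt(v_2) = (Conv(\gm_2\Og_0))\at$ are adjacent in $\Gt'$.

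The only step that I expect to require genuine care is the last implication in (ii): that two fundamental domains $\gm_1\Og_0$ and $\gm_2\Og_0$ of $\Gm$ sharing the circle $f(\ell)$ of $\Lt'$ in $\Og$ have convex hulls that share the disk $Conv(f(\ell))$ of $\Dlt'$ as a common face, i.e.\ that lie on opposite sides of it. This is essentially built into the construction of $\Dlt'$ in \S\ref{cellular} (where it is recorded that the components of $(\h^3\cup\Og)\sm\Dlt'$ are precisely the sets $Conv(\gm\Og_0)$), but I would spell it out by noting that the circle $f(\ell)$ bounds a totally geodesic plane $Conv(f(\ell))$ in $\h^3$ which separates $\h^3\cup\Og$ into two half-spaces, one containing each of the two fundamental domains that abut $f(\ell)$ in $\Og$. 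Everything else is a mechanical unwinding of the definition of $\kpt$ and of the duality $x\mapsto x\at$.
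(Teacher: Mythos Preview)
Your proof is correct and follows essentially the same approach as the paper: both parts unwind the definition of $\kpt$ using the compatibility $(\gm x)\at = \gm\, x\at$, the $\rt$-equivariance of $f$, and Corollary \ref{SuppAreAdjacent}. Your write-up is slightly more explicit in a few places (the tree structure of $\Gt$, the passage from adjacent supports in $\Og\sm\Lt'$ to adjacent convex hulls in $(\h^3\cup\Og)\sm\Dlt'$), but these are elaborations of the same argument rather than a different route.
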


\begin{proof}
(i). 
Let $v$ be a vertex of $\Gt$.
Then, as above,  ${\it Supp}_f(v\at \cap \St) = \gm \Og_0$ for a unique $\gm \in \Gm$.
 For all $\og \in \Gm$, we have  $(\og \cdot v)\at = \og \cdot v\at$.
Since $f$ is $\rho$-equivariant, $${\it Supp}_f((\og \cdot v)\at \cap \St) = {\it Supp}_f(\og \cdot (v\at \cap \St)) = \og \cdot {\it Supp}_f(v\at \cap \St) = \og \cdot \gm \Og_0.$$
Also
$$\og \cdot Conv(\gm \Og_0) = Conv (\og \cdot \gm \Og_0)\ {\rm and}$$  $$\og \cdot (Conv(\gm \Og_0))\at = (Conv (\og \cdot  \gm \Og_0))\at.$$
Thus $$\og \cdot \kpt(v) = \og \cdot (Conv( \gm \Og_0))\at = (\og \cdot Conv( \gm \Og_0))\at = \kpt(\og \cdot v).$$

(ii). For all adjacent vertices $v_1$ and $v_2$ of $\Gt$,   there is an edge $e$ of $\Gt$ connecting $v_1$ and $v_2$.
Since $e\at$ is a disk of $\Dlt$, then  $e\at \cap \St$ is a loop of $\Lt$.
 Thus $v_1\at \cap \St$ and $v_2\at \cap \St$ are adjacent components of $\St \sm \Lt$ along the loop $e\at \cap \St$. 
 By Corollary \ref{SuppAreAdjacent} (ii), ${\it Supp}_f(v_1\at \cap \St)$  and ${\it Supp}_f(v_2\at \cap \St)$ are adjacent components of $\Og \sm \Lt'$, having the common boundary component   $f(e\at \cap \St)$.
Therefore $\kpt(v_1)$ and $\kpt(v_2)$ are adjacent vertices of $\Gt'$.
\end{proof}

By Lemma \ref{kpt} (ii), $~\kpt$ from the vertices of $\Gt$ to of $\Gt'$ uniquely extends to the edges of $\Gt$:
 For each edge $[v_1, v_2]$ of $\Gt$ connecting a vertex $v_1$ to its adjacent vertex $v_2$, its image $\kpt([v_1, v_2])$ is the edge of $\Gt'$ connecting $\kpt(v_1)$ to  $\kpt(v_2)$.
Then,  by Lemma \ref{kpt} (i), $~\kpt\cn \Gt \to \Gt'$ is $\rho$-equivariant.
Therefore, quotienting $\kpt$ by $\Gm$,  we obtain a graph map  $\kp\colon G \to G'$. 

\subsection{Labeling}\label{labeling}
Recall that the free groups $\pi_1(H'), \pi_1(G')$ and $\Gm$ are canonically identified.
Let $e$ be an oriented edge of the graph $G'$.
Since $G$ is a bouquet of $g$ circles,  we can regard $e$ as a simple closed curve on $G'$ and its homotopy class as a unique element of $\{\gm_1^{\pm}, \gm_2^\pm, \dt, \gm_g^\pm\}$. 
We call this homotopy class  the {\it label} of $e$ and denote it by ${\it label}(e)$.

Let $\pg$ denote the covering map, induced by the $\Gm$-action, from $\St$ to $S$,  from $\Og$ to $\pt H'$, from $\Gt$ to $G$ and from $\Gt'$ to $G'$. 
We will see that the labels on the oriented edges of  $G'$ induce the unique labels on the oriented edges of $G, \Gt, \Gt'$ by  $\{\gm_1^{\pm}, \gm_2^\pm, \dt, \gm_g^\pm\}$  so that the labels are preserved under the graph maps $\kp\cn G \to G'$,  \,$\kpt\cn \Gt \to \Gt'$, \, $\pg\colon \Gt \to G$ and $\pg\colon \Gt' \to G'$. 
First, for each oriented edge $e$ of $\Gt'$, define ${\it label}(e)$ to be ${\it label}(\pg(e))$.
Then the labels on the oriented edges of $\Gt'$ are $\Gm$-invariant.
Next, for each oriented edge $e$ of $\Gt$, define ${\it label}(e)$ to be ${\it label}(\kpt(e))$.
Since $\kpt$ is $\rho$-equivariant, the labels on the oriented edges of $\Gt$ are also $\Gm$-invariant.
Finally, for each  oriented edge $e$ of $G$, define ${\it label}(e)$ to be ${\it label}(\td{e})$, where $\td{e}$ is a lift of $e$ to $\Gt$.
Since the labeling on the edges of $\Gt$ is $\Gm$-invariant, ${\it label}(e)$ does {\it not} depend on the choice of the lift $\td{e}$.
Then ${\it label}(e) = {\it label}(\td{e}) = {\it label}(\kpt(\td{e})) = {\it label}(\pg \cc \kpt(\td{e})) = {\it label}(\kp(e))$.
Therefore $\kp$ also preserves the labels. 

\subsection{Folding maps.}(See \cite{Stallings-83}.)
Let $K$ be a graph.  
Label each  oriented edge of $K$  with an element of $\{\gm_1^{\pm}, \gm_2^\pm, \dt, \gm_g^\pm\}$.
Suppose that there are two different oriented edges $e_1 = [u, v_1]$ and $e_2 = [u, v_2]$ of $K$ with a command vertex $u$ and
distinct vertices  $v_1, v_2$. 
In addition  assume that 
 ${\it label}(e_1) = {\it label}(e_2)$.
Then we can naturally identify the edges $e_1$ and $e_2$, yielding a new labeled graph $K'$ (see Figure \ref{folding}).
This operation is called a $\it{folding}$ and the graph map $\mu\colon K \to K'$ realizing this folding  is called a $\it{folding ~ map} $.
Note that a folding  decreases the number of edges in $K$ by $1$, if $K$ is a finite graph.
Clearly $K$ and $K'$ are homotopy equivalent, and in particular $\mu$ induces an isomorphism $\mu^\star\colon \po(K) \to \po(K')$.
Using the covering theory for graphs (\cite[\S 3.3]{Stallings-83}), we see that  $\mu$ lifts to a $\mu^\star$-equivariant  graph map $\mt$ from the universal cover of $K$ to that of $K'$. 
 
 \begin{figure}[htbp]
\includegraphics[width = 3in]{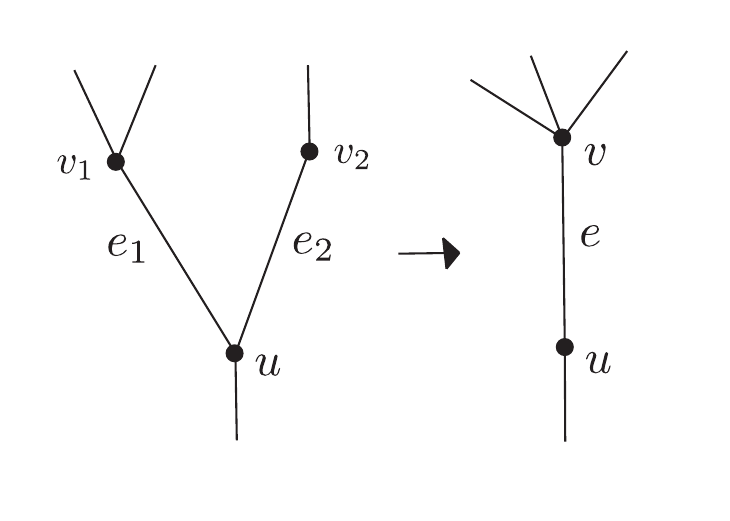}
\caption{Folding.}
\label{folding}
\end{figure}

In  \S \ref{GraphHomomorphism}, we constructed the  $\kp\cn G \to G'$ and, in \S \ref{labeling}, labeled the oriented edges of  $G$ and $G'$ so that $\kp$ preserves the labels. 
Then
\begin{lemma}\label{decomposition} 
There is a sequence of folding maps $$G = G_0 \xrightarrow{\mu_1} G_1 \xrightarrow{\mu_2} \dt \xrightarrow{\mu_n} G_n = G'$$ such that $\kp = \mu_n \cc \mu_{n-1} \cc \dt \cc \mu_1$.
\end{lemma}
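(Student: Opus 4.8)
The plan is to build the folding sequence vertex by vertex, exploiting the fact that $\kp$ is a label-preserving graph homomorphism from the finite connected graph $G$ onto $G'$, where $G'$ has a single vertex and $g$ labeled loop-edges. The key observation is the standard fact from Stallings' folding theory: a label-preserving graph homomorphism $\kp\colon G\to G'$ fails to be an immersion (locally injective) at a vertex $v$ precisely when there are two distinct oriented edges of $G$ emanating from $v$ with the same label; performing the corresponding folding operation $\mu_1\colon G=G_0\to G_1$ strictly decreases the number of edges, and $\kp$ factors through $\mu_1$ as $\kp=\kp_1\circ\mu_1$ where $\kp_1\colon G_1\to G'$ is again a label-preserving graph homomorphism (the label of an edge of $G_1$ is well-defined because the two identified edges of $G_0$ carried the same label, and $\kp$ sent them to the same edge of $G'$). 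First I would verify that $\kp$ does indeed preserve labels and is surjective: surjectivity holds because $\kp$ is the quotient by $\Gm$ of the $\rho$-equivariant $\kpt$, whose image must meet every $\Gm$-orbit of vertices and edges of $\Gt'$ since $\kpt$ is induced by the developing map and $\Gm=\mathrm{Im}(\rho)$ acts transitively on those orbits.

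Next I would run the folding algorithm: as long as $\kp_i\colon G_i\to G'$ is not an immersion, there is an unfolded vertex, so I perform a folding $\mu_{i+1}\colon G_i\to G_{i+1}$ and factor $\kp_i=\kp_{i+1}\circ\mu_{i+1}$. Since each folding drops the edge count by one and $G$ has finitely many edges, this process terminates after some $n\ge 0$ steps at a graph $G_n$ with $\kp_n\colon G_n\to G'$ an immersion. It remains to argue that $\kp_n$ is in fact an isomorphism $\iota$. Being an immersion, $\kp_n$ is locally injective; since $G_n$ is connected (folding preserves connectedness) and finite, and $G'$ is connected, $\kp_n$ is a covering map of graphs of some finite degree $m$. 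But $\kp_n$ induces an isomorphism on $\pi_1$: indeed each folding $\mu_i$ is a homotopy equivalence, hence $\mu_n\circ\cdots\circ\mu_1$ induces an isomorphism $\po(G)\to\po(G_n)$, and $\kp$ induces the isomorphism $\td\rho\colon\po(G)\to\po(G')$ by construction (this is the dual of $\rho\colon\po(S)/\ker\rho\to\Gm$), so $\kp_n=\kp\circ(\mu_n\circ\cdots\circ\mu_1)^{-1}$ also induces an isomorphism. A covering map of finite connected graphs inducing a $\pi_1$-isomorphism has degree $1$, so $\kp_n$ is a bijective local homeomorphism, i.e.\ a graph isomorphism; set $\iota:=\kp_n$. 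Then $\kp=\iota\circ\mu_n\circ\mu_{n-1}\circ\cdots\circ\mu_1$, as required.

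I expect the main technical point to be the clean factorization step: verifying that after a folding $\mu_{i+1}$ the induced map $\kp_{i+1}$ on $G_{i+1}$ is well-defined as a labeled graph homomorphism and that labels continue to be respected. This is where one uses that the two oriented edges identified by a folding share a label and that $\kp_i$ maps them to the same oriented edge of $G'$ — so the folding is "compatible" with $\kp_i$ in the sense that $\kp_i$ descends through the quotient. One must also note that a folding is only legal when it identifies edges with distinct far endpoints $v_1\ne v_2$ (as in the definition in Section 3.4); when $\kp_i$ is not an immersion one always finds such a configuration, because if the two same-labeled oriented edges at a vertex had the same far endpoint, $\kp_i$ restricted to their union would already be locally injective on that pair after a degenerate identification, and in any case the standard Stallings argument handles this by allowing folds that may collapse a loop — but here, since $G$ is dual to a cellular handlebody and $G_i$ always remains a graph dual to such a handlebody with every vertex of degree $\ge 2$, and the target $G'$ is a wedge of circles, the folding sequence is exactly the one that collapses $G$ down to $G'$. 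I would cite \cite[\S3.3, \S3.4]{Stallings-83} for the folding formalism and for the fact that a $\pi_1$-injective immersion of a finite graph onto a finite connected graph of the same $\pi_1$-rank that is surjective must be an isomorphism.
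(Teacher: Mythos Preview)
Your approach matches the paper's: both invoke Stallings' folding to factor $\kp$ as a sequence of folds followed by an immersion $\iota\colon G_n\to G'$, and then argue that $\iota$ is an isomorphism using that $\kp$ (hence $\iota$) induces a $\pi_1$-isomorphism. The paper's proof is three lines citing \cite[\S 3.3]{Stallings-83} directly.

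There is one genuine slip in your finishing step. The inference ``$\kp_n$ is an immersion between finite connected graphs, hence a covering map'' is false: an immersion is locally \emph{injective} on stars, whereas a covering map must be locally \emph{bijective} (e.g.\ a single labeled edge immerses in a rose without covering it). The paper closes the argument instead by invoking \emph{minimality} (no vertex of degree one). Concretely: since $\iota_*$ is $\pi_1$-surjective and $G'$ is minimal, $\iota$ is onto; and since an immersion with $\pi_1$-image all of $\pi_1(G')$ factors through the trivial cover of $G'$, it is an embedding --- combining the two, $\iota$ is a bijection of graphs. Your own last paragraph already gestures at this correct version, so the repair is small: drop the covering-map sentence and cite Stallings for the immersion-into-a-cover factorization instead. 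Separately, your worry about the $v_1=v_2$ case in a fold is unnecessary here: if two distinct oriented edges $[u,v_1],[u,v_2]$ with the same label satisfied $v_1=v_2$, the bigon they form would be a nontrivial loop in $G_i$ killed by $\kp_{i*}$, contradicting the injectivity of $\kp_*$ (and hence of each $\kp_{i*}$).
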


\begin{proof} 
By \cite[\S 3.3]{Stallings-83}, $\kp\colon G \to G'$ is the composition of a sequence of folding maps 
$$G_0 \xrightarrow{\mu_1} G_1  \xrightarrow{\mu_2} G_2  \xrightarrow{\mu_3} \dt  \xrightarrow{\mu_n} G_n$$ and an {\it immersion} $\iota\colon G_n \to G'$ (i.e. locally injective graph map). 
Since $\kp, \mu_1, \mu_2, \dt, \mu_n$ are homotopy equivalences, $\iota\at\cn \po(G_n) \to \po(G')$ is an isomorphism; therefore $\iota$ is also a homotopy equivalence. 
Thus the lift of $\iota$ to a graph map between the universal covers of $G_n$ and $G'$ is an isomorphism that commutes with the action of $\Gm$; hence $\iota$ is the identity map. 
\end{proof}

\subsection{Proof of Proposition \ref{e}} 
Set  $\kp = \mu_n \cc \mu_{n-1} \cc \dt \cc \mu_1\cn G = G_0 \to G_n = G'$ as in Lemma \ref{decomposition}.
Then, for each $i \in \{1,2, \dt, n\}$, the folding map $\mu_i\colon G_{i-1} \to G_i$ lifts to a graph map  $\mt_i\colon \Gt_{i-1} \to \Gt_i$ between their universal covers, so  that $\mt_i$ is  equivariant under the isomorphism $\mu_i\sr\cn \po(G_{i-1}) \to \po(G_i)$.
Let $\kp_0 = \kp\cn G \to G'$ and $\kpt_0 = \kpt\cn \Gt \to \Gt'$.
For each $i \in \{1,2, \dt, n -1\}$, let $$\kp_i = \mu_n \cc \mu_{n-1} \cc \dt \cc \mu_{i+1}\cn G_i \to G'$$ and let $\kp_n\cn G_n \to G'$ be the identity map.
Then  let  $$\kpt_i =  \mt_n \cc \mt_{n-1} \cc \dt \cc \mt_{i+1}\cn \Gt_i \to \Gt',$$ which is a lift of $\kp_i$.
Then $\kpt_i$ is equivariant under $\kp_i\sr\cn \po(G_i) \to \po(G')$ and $\kpt_i \cc \mt_i = \kpt_{i-1}\cn G_{i -1} \to G'$ for each $i \in \{1, \dt, n\}$.

Let $H_0 = H$ and $\Dl_0 = \Dl$.
Similarly, for each $i \in \{1,2, \dt, n\}$,  let $(H_i, \Dl_i)$ denote the cellular handlebody  dual to the graph $G_i$, where $H_i$ is a genus $g$-handlebody and $\Dl_i$ is a union of  disjoint meridian disks in $H_i$ (see \S \ref{DualGraph}).
For each $i \in \{0, 1, \dots, n\}$, let $L_i$ be the multiloop on $\pt H_i$ bounding $\Dl_i$.
Let $\Ht_i$ denote the universal cover of $H_i$.
Let $\Dlt_i$ and  $\Lt_i$ denote the total lifts of $\Dl_i$ and $L_i$ to $\Ht_i$, respectively, so that $\pt \Dlt_i = \Lt_i$.

For a multiloop $M$, which is a subset of a surface, let $[M]$ denote  the set of all loops of $M$.
We define a $\kp_i\sr$-equivariant map $f_i\colon [\Lt_i] \to [\Lt']$ for each $i \in \{0,1,\dt,n\}$.
For each loop $\ell $ of $\Lt_i$, let $D_\ell$ be the disk of $\Dlt_i$ bounded by $\ell $.
Then $D_\ell\at$  is an edge of $\Gt_i$.
Then $\kpt_i(D_\ell\at)$ is an edge of $\Gt'$, and $(\kpt_i(D_\ell\at))\at$ is a disk of $\Dlt'$.
Define $f_i(\ell)$ to be  the loop of $\Lt'$ bounding the disk $(\kpt_i(D_\ell\at))\at$. 
Then by the definition of $\kp$ we see that  $f_0\colon  [\Lt] \to [\Lt']$ is exactly the correspondence given by the covering map $f|_\Lt\colon \Lt \to \Lt'$. 
Since $\kpt_i$ is $\kp_i\sr$-equivariant, $f_i$ is also $\kp_i\sr$-equivariant for all $i \in \{0,1,\dots, n\}$.
Since $\mt_i$ is $\mt_i\sr$-equivariant,  $\mt_i$ similarly induces a $\mu_i\sr$-equivariant map $h_i\colon [\Lt_{i-1}] \to [\Lt_i]$.
Then, since $\kpt_{i-1} = \kpt_i \cc \mt_i$, we have $f_{i-1} = f_i \cc h_i$.

The following proposition,  when $i = 0$, implies Proposition \ref{e}.
\begin{proposition}
For each $i \in \{0,1,2,\dt,n \}$, there exists an embedding $\,\e_i\colon (H_i, \Dl_i) \to (H', \Dl')$ such that\\
(i)   $\e_i$ takes each 2- and 3-cell of $(H_i,\Dl_i)$ into a 2- and 3-cell of $(H', \Dl')$, respectively,\\
(ii) the complement of ${\rm {\rm Im}}(\e_i)$ is  a product so that $H_i \sm int({\rm Im} (\e_i))$ is homeomorphic to  $S \times [0,1]$, and \\
(iii) if $\ell $ is a loop of $\Lt_i$, then $\et_i(\ell)$ is contained in $Conv(f_i(\ell)) \cong \ol{\h^2}$, where $\et_i\cn (\Ht_i, \Dlt_i) \to (\h^3 \cup \Og, \Dlt')$ is a lift of $\e_i$. 
\end{proposition}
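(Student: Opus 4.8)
The plan is to prove the proposition by induction on $i$, running \emph{downward} from $i = n$ to $i = 0$, so that the base case is the handlebody $(H_n, \Dl_n) = (G')\at$ dual to the target graph $G'$ itself (via $\iota$), where the embedding is (essentially) the identity, and each inductive step ``undoes'' one folding map $\mu_i\colon G_{i-1} \to G_i$ at the level of handlebodies. For the base case $i = n$: since $\iota\colon G_n \to G'$ is a graph isomorphism, it is dual to a homeomorphism $(H_n, \Dl_n) \to (H', \Dl')$; take $\e_n$ to be this homeomorphism. Then (i) holds trivially, (ii) holds because $H' \sm int(Im(\e_n)) = \pt H' \times [0,1]$ is a collar, and (iii) follows because $\kpt_n = \td\iota$ is dual to $\et_n$, so for a loop $\ell$ of $\Lt_n$ the disk $D_\ell$ maps to the disk $\et_n(D_\ell)$ of $\Dlt'$, whose boundary is $f_n(\ell)$; since $\et_n(D_\ell)$ is already a component of $\Dlt' = \sq Conv(\Lt')$, we have $\et_n(D_\ell) = Conv(f_n(\ell))$ and in particular $\et_n(\ell) \st Conv(f_n(\ell))$.

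The inductive step is the heart of the argument. Assume $\e_i\colon (H_i, \Dl_i) \to (H', \Dl')$ exists with (i)--(iii); I want to build $\e_{i-1}\colon (H_{i-1}, \Dl_{i-1}) \to (H', \Dl')$. Dually, the folding map $\mu_i\colon G_{i-1} \to G_i$ identifies two oriented edges $e_1 = [u, v_1]$, $e_2 = [u, v_2]$ of $G_{i-1}$ with a common label, carrying the same label as their image edge in $G_i$; on the handlebody side this corresponds to the two meridian disks $D_1, D_2$ of $\Dl_{i-1}$ dual to $e_1, e_2$ becoming a single disk after the fold, and the vertices $v_1, v_2$ (distinct) mapping to possibly-equal vertices. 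So $(H_{i-1}, \Dl_{i-1})$ is obtained from $(H_i, \Dl_i)$ by an ``unfolding'': cutting a $3$-cell of $H_i$ (the one dual to the vertex $\mu_i(u)$) along an arc and reattaching, replacing one disk by two. The construction of $\e_{i-1}$ will be: take $\e_i$, and locally near the disk $\e_i(D)$ (where $D$ is the image disk in $\Dl_i$) and the relevant $3$-cells, modify the embedding so that it pulls apart into the two-disk picture. The key point is that this local modification is possible inside $H'$ without leaving the prescribed cells: because $label(e_1) = label(e_2) = label(\text{image edge})$, the two disks $\et_{i-1}(D_1)$, $\et_{i-1}(D_2)$ we want to produce in $\Dlt'$ differ from $\et_i(D)$ by deck transformations $\gm, \gm'$ with $\gm^{-1}\gm'$ equal to (the label-determined element), and the equivariance of $\kpt_{i-1} = \kpt_i \cc \mt_i$ together with $f_{i-1} = f_i \cc h_i$ forces them to be exactly $Conv(f_{i-1}(D_1\at))$ and $Conv(f_{i-1}(D_2\at))$; so (iii) is what guarantees the local picture is combinatorially consistent. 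Property (ii) is preserved because unfolding inside a collar-complement keeps the complement a collar: $H' \sm int(Im(\e_{i-1}))$ is obtained from $H' \sm int(Im(\e_i)) \cong S \times [0,1]$ by attaching a $1$-handle along $\pt H$-side in a way that cancels, and one checks directly it is again $S' \times [0,1]$ using that $H_{i-1}$ and $H_i$ are abstractly homeomorphic (both genus-$g$ handlebodies). Property (i) is local and immediate from the construction.

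I expect the main obstacle to be making the ``unfolding'' modification of the embedding precise and genuinely local — i.e., verifying that when we pull the single disk $\e_i(D)$ apart into two parallel disks $\et_{i-1}(D_1), \et_{i-1}(D_2)$ sitting in distinct components of $\Dlt'$, the intervening region in $H'$ (between these two disks, or wrapping around a $3$-cell) can be filled in by $\e_{i-1}$ compatibly with the cell structure, and that no two cells of $(H_{i-1}, \Dl_{i-1})$ get mapped on top of each other so that $\e_{i-1}$ is still an embedding. The label-preservation of $\kp$ (established in the Labeling subsection) is exactly the hypothesis that controls which deck translates are in play, and Stallings' covering theory for graphs gives that $\mt_i$ is $\mu_i\sr$-equivariant, so the dual disks line up correctly under the deck action; the remaining work is the $3$-dimensional topology of thickening this graph-level fact, together with a careful bookkeeping of the collar complement in (ii). I would carry the argument out by: (1) fixing explicit regular-neighborhood models $N_i$ of the disks as in Section~\ref{DualGraph}; (2) identifying, via $\mt_i$, precisely the two edges $e_1, e_2$ of $\Gt_{i-1}$ folded together and their labels; (3) describing the local unfolding surgery on $Im(\e_i)$ supported in a neighborhood of $\e_i(D)$ and the adjacent $3$-cells; (4) checking (i) and (iii) from the combinatorial/equivariance data; and (5) verifying (ii) by an explicit collar argument.
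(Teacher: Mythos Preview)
Your downward induction from $i=n$ to $i=0$ matches the paper's strategy, but the inductive step contains a genuine misconception.

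A minor point first: if $\e_n$ is literally a homeomorphism onto $(H', \Dl')$, then $H' \sm int(Im(\e_n)) = \pt H'$, not $S \times [0,1]$. The paper instead takes $\e_n$ to be an isomorphism onto a small regular neighborhood of $G' \subset H'$, whose complement is the desired collar.

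The real issue is your unfolding. You assert that the two disks $\et_{i-1}(D_1)$, $\et_{i-1}(D_2)$ should lie in \emph{distinct} components of $\Dlt'$, related by a nontrivial deck transformation, and that one must ``fill in'' an intervening region so that $Im(\e_{i-1}) \supset Im(\e_i)$. But the combinatorics say otherwise: if $\td e_1, \td e_2$ are lifts to $\Gt_{i-1}$ of the folded edges sharing a common vertex over $u$, then $\mt_i(\td e_1) = \mt_i(\td e_2)$ is a \emph{single} edge of $\Gt_i$, so $\kpt_{i-1}(\td e_1) = \kpt_{i-1}(\td e_2)$. Hence $f_{i-1}(\ell_1) = f_{i-1}(\ell_2)$ for the corresponding loops of $\Lt_{i-1}$, and (iii) forces both $\et_{i-1}(\ell_1)$ and $\et_{i-1}(\ell_2)$ to lie in the \emph{same} hyperbolic plane. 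A surgery that separates them into different planes and enlarges the image cannot satisfy (iii), and your handle-cancellation sketch for (ii) does not correspond to what is actually happening.

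The paper proceeds in the opposite direction: $(H_{i-1}, \Dl_{i-1})$ is realized \emph{inside} $(H_i, \Dl_i)$. One chooses a compressing $2$-disk $E$ in (a slight enlargement of) the $3$-cell $Q = v\at$, meeting the meridian disk $D_e = e\at$ in a single arc; removing a regular neighborhood $N$ of $E$ splits $D_e$ into two sub-disks $F_1, F_2 \subset D_e$ and splits $Q$ into two $3$-cells, and $(H_i \sm N,\, (\Dl_i \sm D_e) \cup F_1 \cup F_2)$ is isomorphic to $(H_{i-1}, \Dl_{i-1})$. One then simply sets $\e_{i-1} := \e_i|_{H_i \sm N}$. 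Property (i) is immediate since each new cell sits inside an old one; (ii) holds because $\e_{i-1}$ is isotopic to $\e_i$ in $H'$; and (iii) follows from $F_1, F_2 \subset D_e$, giving (for a lift $m$ of $\pt D_e$) $\et_{i-1}(\ell_j) \subset \et_i(D_m) \subset Conv(f_i(m)) = Conv(f_{i-1}(\ell_j))$. The image \emph{shrinks} at each step, and no filling is needed.
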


\begin{figure}[htbp]
\includegraphics[width = 5in]{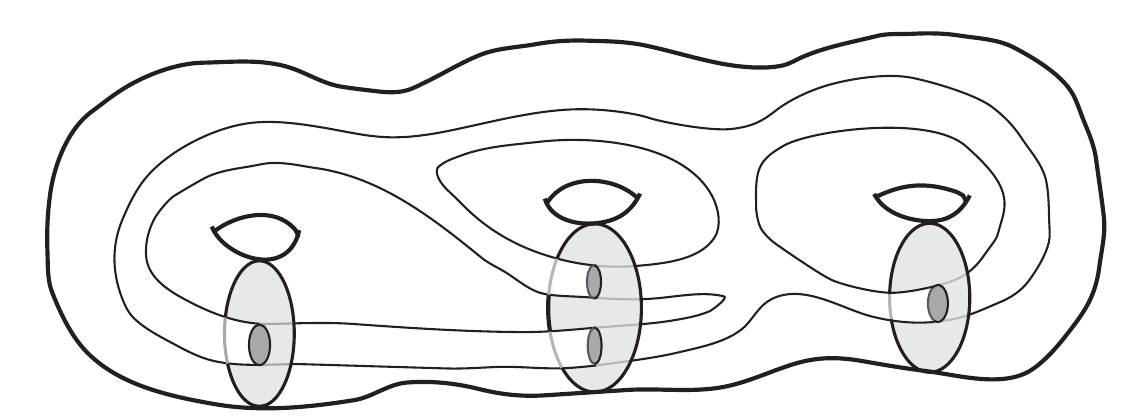}
\caption{An example of $\e_i$.}

\end{figure}

\begin{proof}
When $i = n$,  the graph map $\kp_n\cn G_n \to G'$ is an isomorphism. 
Then $\kp_n$ induces an isomorphism from $(H_n, \Dl_n)$  to $(H', \Dl')$.
The graph $G'$ is embedded in $(H', \Dl')$, realizing the duality between $G'$ and $(H', \Dl')$, so that $H'$ is a regular neighborhood of $G'$.
Thus we can easily construct an embedding $\e_n\cn(H_n, \Dl_n)  \to (H', \Dl')$, in an obvious way, onto a smaller regular neighborhood of $G'$  that satisfies (i), (ii) and (iii).

Next, assuming  that there is $\e_i$ satisfying (i) - (iii), we construct $\e_{i - 1}$ satisfying (i) - (iii).
(Figure \ref{Before&AfterSplitting} illustrates this induction.)
Let $e_1 = [u, v_1]$, $e_2 = [u, v_2]$ denote the edges of $G_{i-1}$ that the holding map $\mu_i$ identifies,
and let  $e = [u, v]$ denote the edge of $G_i$  obtained by this identification. 

Let $P$ and $Q$ be the components of $H_i \sm \Dl_i$ that are dual to the vertices $u$ and $v$, respectively.
Let $c_1, c_2, \dt, c_p$ be the edges of $G_{i-1}$, other than $e_1$, that end at $v_1$, and 
let $d_1, d_2, \dt, d_q$ be the edges of $G_{i-1}$, other than $e_2$, that end at $v_2$.

Let $D_{c_1}, D_{c_2}, \dt, D_{c_p}$, $D_{d_1}, D_{d_2}, \dt, D_{d_q}, D_e$ denote the disks of $\Dl_i$ that are dual to $c_1, c_2, \dt, c_p, \linebreak[2] d_1,d_2, \dt, d_q, e$, respectively. 
Then $Q$ is bounded by the meridian disks $D_{c_1}, D_{c_2}, \linebreak[2]  \dt, D_{c_p}$, $D_{d_1}, D_{d_2}, \dt, D_{d_q}, D_e$.  
Pick  two disjoint meridian disks $D_1$ and $D_2$ of $H_i$ parallel to $D_e$ such that $D_1$ and $D_2$ are contained in $P$ and that $D_1$ and $D_e$ bound a solid cylinder in $H_i$ containing $D_2$.

\begin{figure}[htbp]
\includegraphics[width = 5in]{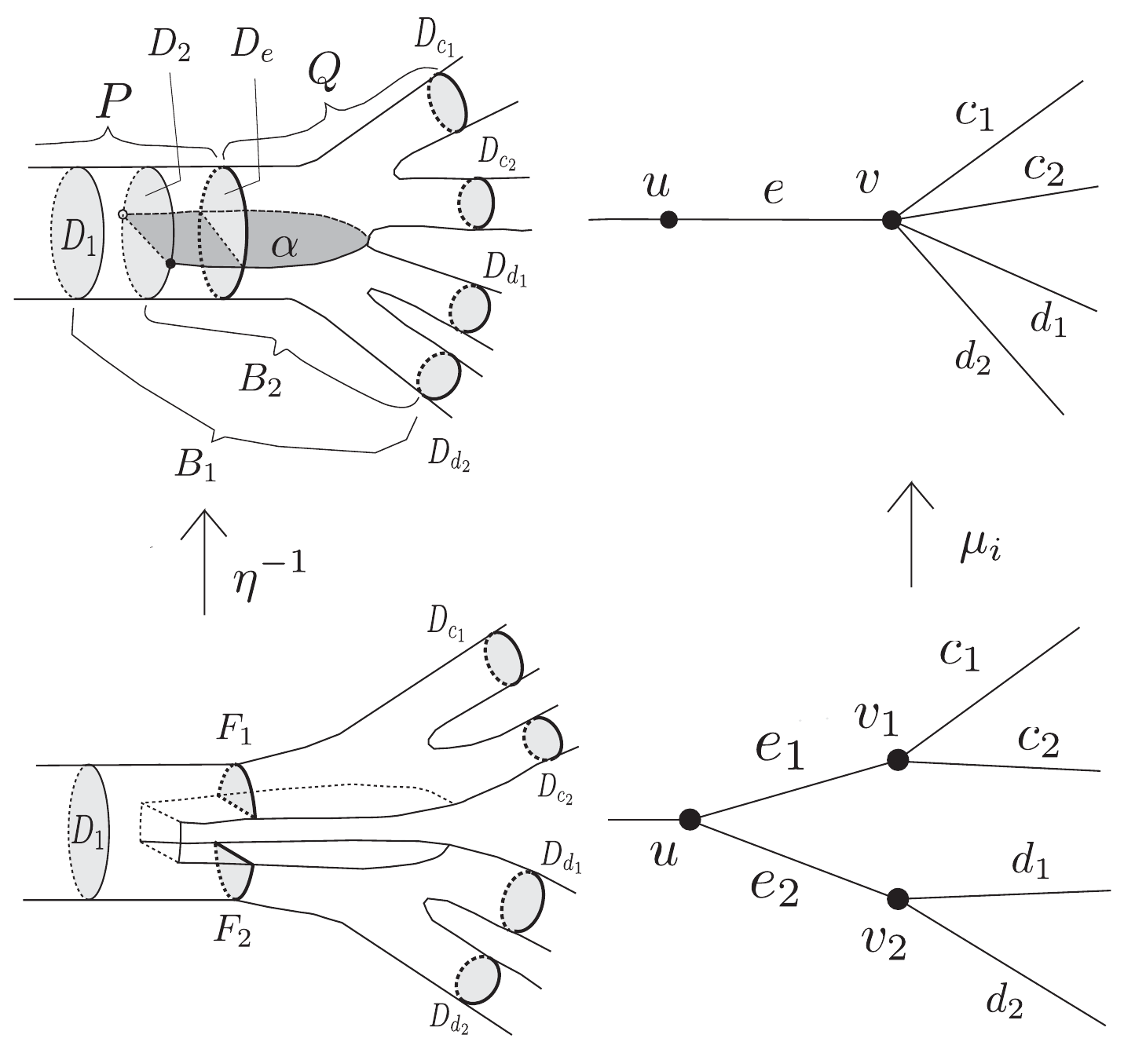}
\caption{}
\label{Before&AfterSplitting}

\end{figure}

{\it Case 1.}
Suppose that $u$ and $v$ are different vertices of $G_i$.
Then $u$, $v_1$, $v_2$ are different vertices of $G_{i -1}$, and $P$ and $Q$ are different components of $H_i \sm \Dl_i$. 
For each $j \in \{1,2\}$, let $B_j$ be the union of $Q$ with the solid cylinder, in $P$, bounded by $D_j$ and $D_e$.
Then $B_j$ is topologically the  $3$-disk and it is bounded by the meridian disks $D_j, D_{c_1}, D_{c_2}, \dt, D_{c_p}$, $D_{d_1}, D_{d_2}, \dt, D_{d_q}$.
Then $\pt B_2$ is a 2-sphere and those meridian disks are disjointly embedded in $\pt B_2$, so that the complement is a $(1 + p + q)$-holed sphere. 
Choose an arc $\ap$ properly embedded in this $(1 + p + q)$-holed sphere, such that:
(I) the end points of $\ap$ are distant points on $\pt D_2$,
 (II) $\ap$ separates $D_{c_1}, D_{c_2}, \dt, D_{c_p}$ and $D_{d_1}, D_{d_2}, \dt, D_{d_q}$ in the 2-disk $\pt B_2 \sm D_2$, and
(III) $\ap$ transversally intersects the circle $\pt D_e$ in exactly two points. 
Pick  an arc $\bt$ properly embedded in the disk $D_2$  connecting the end points of $\ap$. 
Then $\ap \cup \bt$ is a loop on $\pt B_2$ and it bounds a $2$-disk $E$  properly embedded in $B_2$.
In addition, we can assume that  $E$ transversally intersects $D_e$ in a single arc.

We will see that the compression along disk $E$ naturally transforms  $(H_i, \Dl_i )$  to $(H_{i - 1}, \Dl_{i - 1} )$ and also  $\e_i$  to $\e_{i -1}$  (Figure \ref{Before&AfterSplitting}).
Choose a small regular neighborhood $N$ of $E$ in $B_2$, so that $D_e \sm N$ is a union of two disjoint 2-disks, which we denote by $F_1$ and $F_2$. 
Then $N$ is a 3-disk such that $\pt N \cap \pt H_i$ is a 2-disk contained in the sphere $\pt B_1$. 
Thus there is an isotopy $\eta$ from $H_i$ to $H_i \sm N$ supported on $B_1$.
Then $Q \sm N$ has two components; 
if necessarily by swapping the names $F_1$ and $F_2$ of the disks,  a component of $Q \sm N$ is bounded by $F_1, D_{c_1}, D_{c_2}, \dt, D_{c_p}$ and the other by $F_2, D_{d_1}, D_{d_2}, \dt, D_{d_q}$. 
Then $H_i \sm N$ is still a genus-$g$ handlebody.
In addition $\Dl_i \cap (H_i \sm N)$ is obtained from $\Dl_i$ by replacing  the disk $D_e$ by the disks $F_1$ and $F_2$; thus  $\Dl_i \cap (H_i \sm N)$  is a union of meridian disks in the handlebody $H_i \sm N$.
Thus we see that the new cellular handlebody $(H_i \sm N,  \Dl_i \cap (H_i \sm N))$ is isomorphic to $(H_{i-1}, \Dl_{i-1})$.

We see moreover that the inverse isotopy $\eta\iv$ of $H_i \sm N$ to $H_i$ induces the folding map $\mu_i\cn G_{i -1} \to G_i$ via the duality: 
The dual of $v_1$ is the component of $H_{i-1} \sm \Dl_{i-1}$ bounded by $F_1, D_{c_1}, D_{c_2}, \dt, D_{c_p}$, \linebreak[4] and the dual of $v_2$ is the component of  $H_{i-1} \sm \Dl_{i-1}$ bounded by \linebreak[4] $F_2, D_{d_1}, D_{d_2}, \dt, D_{d_q}$; 
the isotopy $\eta\iv$ combines $F_1$ and $F_2$ into $D_e$, and accordingly $\mu_i$ folds the edges $F_1\at = e_1$ and $F_2\at = e_2$ into $D_e\at = e$; 
the isotopy $\eta\iv$ is supported on $B_i$ and in particular it preserves all the other disks of $\Dl_{i-1}$, and accordingly $\mu_i$ is an isomorphism onto its image in the complement of  $e_1$ and $e_2$.

We define $\e_{i -1}\colon (H_{i-1}, \Dl_{i-1}) \to (H',\Dl')$  to be the restriction of  $\e_i\colon (H_i, \Dl_i) \linebreak[2] \to (H', \Dl')$ to $(H_i \sm N,  \Dl_i \cap (H_i \sm N))$.
We show that $\e_{i-1}$ satisfies (i), (ii) and (iii).
Each 2- and 3-cell of $(H_{i-1}, D_{i-1})$ is contained in a 2- and 3-cell of $(H_i, \Dl_i)$, respectively.
Since $\e_i$ satisfies (i),  thus $\e_{i-1}$ also satisfies (i).
Recall that  $\eta$ isotopes ${\rm Im}(\e_i)$ to ${\rm Im}(\e_{i -1})$.    
Since $\e_i$ satisfies (ii), so does $\e_{i-1}$.

We show (iii).
Since the isotopy $\eta $ is supported on the 3-disk $B_1$ embedded in $H_i$, it lifts to a ($\Gm$-equivariant) isotopy $\td{\eta }$ from $\Ht_i$ to $\Ht_{i-1}$ supported on the total lift  $\td{B}_1$ of $B_1$ to $\Ht_{i}$. 
Since each component $R$ of $\td{B}_1$ is homeomorphic to $B_1$, we can canonically identify $\ett|_R$ with $\eta|_{B_1}$.
For each loop $\ell $ of $\Lt_{i-1}$, let $D_\ell$ denote the disk of $\Dlt_{i -1}$ bounded by $\ell $.
Let $m = h_i(\ell)$,  which is a loop of $\Lt_i$, and let $D_m$ be the disk of $\Dlt_i$ bounded by $m$.
Since $f_{i-1} = f_i \cc h_i$, we have $f_{i-1}(\ell) = f_i(h_i(\ell)) = f_i(m)$. 

First, suppose that $\ell $ does $not$ bound a lift of $F_1$ or $F_2$ in $\Ht_{i-1}$.
Since $\eta$ is the identity on $\Dl_i \sm D_\ell$, the isotopy $\tilde{\eta}\iv$ fixes $D_m$.
Therefore $D_\ell = D_m$ by the inclusion $(\Ht_{i-1}, \Dlt_{i-1}) \st (\Ht_i,\Dlt_i)$.
Then we have $\et_{i-1}(D_\ell) = \et_i(D_m)$. 
Since $\et_i$ satisfies (iii),  we have $\et_i(D_m) \st Conv(f_i(m)) = Conv(f_{i-1}(\ell))$.
Therefore $\et_{i-1}(D_\ell) \st Conv(f_{i-1}(\ell))$.
 
Next, suppose that $\ell $ bounds a lift of $F_1$ or $F_2$.
Then, accordingly, $D_\ell$ is a lift of $F_1$ or $F_2$ to $\Ht_{i-1}$.
Therefore $D_m$ is a lift of $D_e$ to $\Ht_i$, and it is contained in a component $R$ of $\td{B}_1$ by the inclusion $(\Ht_{i-1}, \Dlt_{i-1}) \st (\Ht_i,\Dlt_i)$.
Since $\ett|_R = \eta|_{B_1}$, we have $D_\ell \st D_m$ and thus $\et_{i-1}(D_\ell) \st \et_i(D_m)$.
Similarly to the previous case, we have $\et_i(D_m) \st Conv(f_i(m)) = Conv(f_{i-1}(\ell))$.
Thus $\et_{i-1}(D_\ell) \st Conv(f_{i-1}(\ell))$, and therefore $\e_{i-1}$ satisfies (iii).

\nin {\it Case 2.} (For the following discussion, see Figure \ref{case2}.)
Suppose that $u$ and $v$ are the same vertices of $G_i$.
Then, without loss of generality, we can assume that $u = v_1$ and $u \neq  v_2$ (if $u = v_1 = v_2$, there is a contradiction to the definition of folding maps). 
Then the components $P$ and $Q$ of $H_i \sm \Dl_i$ are the same component.
Since $u = v_1$, we can assume that  $e = c_1$.
Then $D_e = D_{c_1}$.

We define $B_1$ and $B_2$ analogously:
Let $B_1$ be the 3-disk in $H_i$ bounded by $D_1, D_{c_2}, \dt, D_{c_p}$,  $D_{d_1}, D_{d_2}, \dt, \linebreak[2]D_{d_q}$.
Let $B_2$ be the 3-disk in $H_i$ bounded by $D_1, D_2, D_{c_2}, \dt, D_{c_p}$, $D_{d_1}, D_{d_2}, \dt, D_{d_q}$.
Similarly let $\ap$ be an arc properly embedded in the $(1 + p+q)$-holed sphere 
$$\pt B_2 \sm (D_1 \sq D_2 \sq D_{c_2} \sq \dt \sq D_{c_p} \sq D_{d_1} \sq D_{d_2} \sq \dt \sq D_{d_q}),$$ such that:
(I) the end points of $\ap$ are contained in $\pt D_2$;
(II) $\ap$ separates $D_1, D_{c_2}, D_{c_3}, \dt, D_{c_p}$ and $D_{d_1}, D_{d_2}, \dt, D_{d_q}$ in the 2-disk $\pt B_2 \sm D_2$, and
(III) $\ap$ transversally intersects $\pt D_e = \pt D_{c_1}$  in exactly two points. 
The rest of the proof is also similar to  ${\it Case 1}$. 
\end{proof}

\begin{figure}[htbp]
\includegraphics[width = 5in]{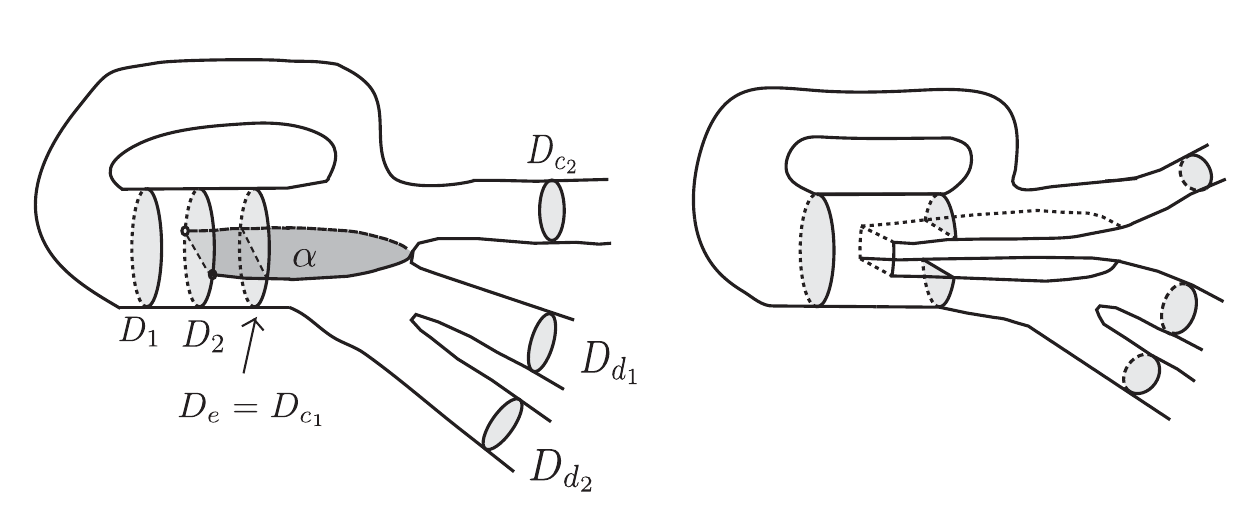}
\caption{}
\label{case2}
\end{figure}

%% Section $M$ decomposes $(S,C) into good holed spheres$

\section{Decomposition of a Schottky structure into good holed spheres}\label{HoledSphere} 

We started with a projective structure $C$ on $S$  with real Schottky holonomy $\rho\cn \po(S) \to \Gm$.
Then, we obtained a multiloop $L$ on $S$ by Proposition \ref{N} (see \S \ref{CellularStructures}), and regarded $S$ as the boundary of the handlebody $H$ so that $L$ bounds a multidisk $\Dl$ in  $H$ (\S \ref{cellular}).
There we also constructed a pair $(H', \Dl')$ of the hyperbolic handlebody $H'$ and the multidisk $\Dl'$ consisting of copies of $\ol{\h^2}$ such that $\pt H$ is  the uniformizable structure $C' = \Og/\Gm$.
Let $\e\colon (H, \Dl) \to (H', \Dl')$ be the embedding obtained by Proposition \ref{e}.
Then by Proposition \ref{e} (ii)  there is a homeomorphism $\eta\colon S \times [0,1] \to H' \sm int({\rm Im}(\e))$.
We set $\eta_t(s) = \eta(s, t)$ so that $\eta_0$ is a homeomorphism from $S$ onto $\e(\pt H)$ and  $\eta_1$  onto $\pt H'$.
By Proposition \ref{e} (i), (iii),   the multiloop $\eta_0(L)$ embedded in $H'$ is the intersection of the boundary of  the embedded handlebody $\e(H)$ and the multidisk $\Dl'$.
Let $M' = \eta_1 (L)$.
Then $M'$ is a multiloop on $\pt H' = C'$.
 (This multiloop $M'$ depends on the choice of $\eta$, however it is  unique it is up to an isotopy.)

We show that $M'$ satisfies Assumptions (I), (II) of Proposition \ref{N}.
Since each component of $S \sm L$ is a holed sphere by Proposition \ref{N} (iii) and $\eta_1$ is a homeomorphism, each component of $C' \sm M'$ is also  a holed sphere; thus $M'$ satisfies (II). 
For an arbitrary loop $m'$ of $M'$, let $\ell$ be the corresponding loop of $L$, i.e. $\eta_1(\ell)  = m'$.
Then $\eta(\ell  \times [0,1])$ is an annulus properly embedded in the product region $H' \sm int({\rm Im}(\e))$ bounded by $\eta_1(\ell) = m'$ and $\eta_0(\ell) = \e(\ell)$.
Since $\ell $ bounds a meridian disk in $H$, then $\e(\ell)$ also bounds a meridian disk in the embedded handlebody ${\rm Im}(\e)$.
Thus the union of this meridian disk in ${\rm Im}(\e)$ and the annulus $\eta(\ell \times [0,1])$  is a meridian disk in  $H'$ bounded by $m'$.
Hence $M'$ satisfies (I).

Let $M$ be the pullback of $M'$ via the developing map $f\cn \St \to \rs$ of $C$. 
Then $M$ is a multiloop on $S$ (\S \ref{pullback}),  and it decomposes $(S,C)$ into almost good holed spheres (Proposition \ref{N} (iii)).
In this section, we show that this decomposition is moreover into good holed spheres.  
 Let $\Ct$ be the projective structure on $\St$ obtained by lifting $C$, and let $\Mt$ and  $\Mt'$ denote the total lifts of $M$ and $M'$ to $\St$ and $\Og$, respectively. 
Then 
\begin{theorem}\Label{M} 
There exists a $\td{\rho}$-equivariant homeomorphism $\zeta: \St \to \Og$ such that, if $\Pt$ is a component of $\St \sm \Mt$, then
 the restriction $\Ct | \Pt$ is a good holed sphere whose full support is $\zt(\Pt)$ and in particular each boundary component $\ell$ of $\Pt$ covers the loop $\zt(\ell)$ via $f$.  

\end{theorem}

Thus, if $P$ is a component of $S \sm M$, then the restriction of $C$ to $P$ is a good holed sphere.
Theorem \ref{M} follows from:
\begin{proposition}\label{single}
If $\mu'$ is a loop of $\Mt'$, then $\lf f\iv(\mu') \rf$ is a single loop on $\St$. 
\end{proposition}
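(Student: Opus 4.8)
The plan is to show that $f\iv(\mu')$, which by Lemma \ref{multiloop} is already known to be a multiloop on $\St$, has exactly one essential loop in it. Let $\mu'$ be a loop of $\Mt'$; since $M'$ satisfies Assumptions (I) and (II) of Proposition \ref{N} (as verified just before the statement of Theorem \ref{M}), $\mu'$ is a meridian loop of $H'$, and it separates $\Og$ into two pieces whose images in $\h^3\cup\Og$ are cut off by the totally geodesic disk $Conv(\mu')$. The first step is to recall from Corollary \ref{eCorollary} that each component $R$ of $\St\sm\Lt$ is properly embedded by $\et$ into $Conv(Supp_f(R))$, and from Corollary \ref{SuppAreAdjacent} that the combinatorics of $\St\sm\Lt$ (which components are adjacent, along which loops) is carried by $f$ to the combinatorics of $\Og\sm\Lt'$, and by $\et$ to the cellular structure of $(H',\Dl')$. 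Thus the embedding $\et\colon(\Ht,\Dlt)\to(\h^3\cup\Og,\Dlt')$ is the geometric realization of the graph homomorphism $\kp\colon G\to G'$, which by Lemma \ref{decomposition} factors as a composition of folding maps followed by an isomorphism.

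The key idea is the following. The loop $\mu'\st\Og=\pt H'$ bounds a meridian disk of $H'$; inside the handlebody this disk is $Conv(\mu')$ (up to isotopy), a $2$-cell meeting the embedded graph $G'$ transversally in a single point, on a single edge $e'$ of $G'$. Now look at $f\iv(\mu')$. By Proposition \ref{N}(ii) applied to $N=M$ (legitimate because $M'$ satisfies (I),(II)), every essential loop of $f\iv(\mu')$ is a loop of $\Mt$ — wait, more carefully: the relevant statement is that each component of $f\iv(\mu')$ is either a loop of $\Mt$ or an inessential loop on $\St$, which is exactly the content established in the proof of Lemma \ref{pi(Q)}. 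So it suffices to count the loops of $\Mt$ lying over $\mu'$, i.e. to count the $2$-cells of $(\Ht,\Dlt)$ that $\et$ carries into $Conv(\mu')$. Dually, this is the number of edges of $\Gt$ that the graph homomorphism $\kpt\colon\Gt\to\Gt'$ sends onto the edge of $\Gt'$ dual to $Conv(\mu')$.

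The main step, then, is to show this count is exactly one. This is where folding maps enter: a folding map $\mu_i$ identifies two edges sharing a vertex and having the same label, so a priori it can make a map non-injective on edges. The crucial observation is that by Proposition \ref{e}(iii) (equivalently (iii$'$)), $\e$ realizes the holonomy $\rho$ faithfully, and the composite $\kp=\iota\cc\mu_n\cc\dt\cc\mu_1$ is a homotopy equivalence onto the bouquet $G'$; but being a homotopy equivalence of graphs that is a composition of folds, each $\mu_i$ must actually be "trivial" in the sense that it folds two edges that were already parallel (bound a disk together in the handlebody) — more precisely, I would use the compression picture in the proof of the preceding Proposition: at stage $i$ the folded edges $F_1,F_2$ become parallel copies $D_1,D_2$ of $D_e$ inside the $3$-disk $B_2$, so the two disks of $\Dlt_{i-1}$ over a given disk of $\Dlt_i$ are, after the isotopy $\ett$, the *same* disk. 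Chasing this back through the tower $\Ht_0\to\cdots\to\Ht_n$ via the maps $h_i\colon[\Lt_{i-1}]\to[\Lt_i]$, and using that $f_{i-1}=f_i\cc h_i$ with $f_n$ a bijection on edge-sets of $\Gt_n$ (an isomorphism realized by $\et_n$), one concludes that $f_0=f|_\Lt$ — and hence the induced correspondence on $\Mt$ — is one-to-one on the set of loops lying over any single $\mu'$. Therefore $\lf f\iv(\mu')\rf$ is a single loop.

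I expect the main obstacle to be the bookkeeping in that last step: making rigorous the claim that the folding maps $\mu_i$, being homotopy equivalences, only ever fold edges that bound a disk together, and that this disk collapses under $\ett$ so that $h_i$ is "injective on loops over a fixed loop of $\Lt'$". One has to be careful that $h_i$ as a map $[\Lt_{i-1}]\to[\Lt_i]$ need not be injective globally, but *is* injective on the fiber over each loop of $\Lt'$ — which is exactly the preimage under $f_{i-1}$ versus $f_i$ of a fixed $\mu'$. The geometric input that prevents a genuine non-injective fold is that $\et_i$ is an *embedding* into $\h^3\cup\Og$ landing each $2$-cell inside the correct totally geodesic disk $Conv(f_i(\ell))$, and two distinct loops of $\Lt_i$ over the same $\mu'$ would force two disjoint disks inside the single disk $Conv(\mu')$, separating $\Og$ into three pieces, contradicting that $\mu'$ is a single meridian separating $\Og$ into two and that $\et_i(\Ht_i)$ deformation retracts onto the embedded graph meeting $Conv(\mu')$ once.
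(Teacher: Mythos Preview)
Your argument has a fundamental confusion between $\Mt'$ and $\Lt'$, and this confusion makes the proposed approach collapse.

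The multiloop $\Lt'$ consists of the round circles $\Gm(\pt\Og_0)$; these are the loops for which the folding machinery, the dual graphs $G,G'$, and Proposition~\ref{e} are set up. By contrast, $\Mt'$ is the total lift of $M'=\eta_1(L)$: each loop $\mu'$ of $\Mt'$ is $\ett_1(\ld)$ for a specific loop $\ld$ of $\Lt$ on $\St$, pushed out to $\Og$ along the product structure on $H'\sm int(Im(\e))$. In general $\mu'$ is \emph{not} a loop of $\Lt'$, so there is no ``edge of $\Gt'$ dual to $Conv(\mu')$'', and the edge-counting argument you propose has no meaning. What you are really attempting is to show that $\lf f\iv(\ld')\rf$ is a single loop for $\ld'=f(\ld)\in\Lt'$; but that statement is \emph{false}. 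Indeed, each fold $\mu_i$ genuinely identifies two distinct disks $F_1,F_2$ of $\Dl_{i-1}$ inside a single disk $D_e$ of $\Dl_i$, so the correspondence $f_0\colon[\Lt]\to[\Lt']$ is typically many-to-one. The paper's own Lemma~\ref{phi=id} records exactly this: $\e_\Fh\iv(D'_{\ld'})$ is a multidisk bounded by $\lf f_\Fc\iv(\ld')\rf$, and the entire purpose of Step~3 (the homeomorphism $\phi$ of $\hb$) is to eliminate those extra disks one at a time. Your claim that folds are ``trivial'' because $\kp$ is a homotopy equivalence is incorrect: a composition of genuine (non-injective-on-edges) folds is still a homotopy equivalence of graphs.

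The actual mechanism behind the proposition is quite different. The loop $\mu'$ remembers the specific loop $\ld$ of $\Lt$ it came from, via the annulus $\ett(\ld\times[0,1])\subset\h^3$. The paper shows (Steps~1--2) that all of $\lf f\iv(\mu')\rf$ lives in a compact subsurface $F\subset\St$, extends everything to a good punctured sphere $\Fc$ with an embedded $3$-disk $H_\Fh\subset\hb$, and then (Step~3, Lemma~\ref{InductionLemma}) constructs by hand an ambient homeomorphism $\phi$ of $\hb$ that isotopes $\mu'$ to $\ld'$ \emph{in the complement of the punctures} while simultaneously pushing the extra components of $H_\Fh\cap D'_{\ld'}$ off $D'_{\ld'}$ one by one. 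Neither the reduction to a compact piece nor the inductive isotopy is present in your outline. Finally, your last paragraph's geometric objection (``two disjoint disks \dots\ separating $\Og$ into three pieces'') is mistaken: those disks lie in $\h^3$, and having several parallel disks inside a single $Conv(\ld')$ is perfectly compatible with $\et_i$ being an embedding --- that is precisely the picture in Figure~\ref{Before&AfterSplitting}.
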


\begin{proof}[Proof of Theorem \ref{M}, with  Proposition \ref{single} assumed] 
~The developing map $f$, by Proposition \ref{single},   yields a $\td{\rho}$-equivariant bijection $f_\ast\cn[\Mt] \to [\Mt']$ from the set of the loops of $\Mt$ onto that of $\Mt'$.  
Therefore, we can choose a $\td{\rho}$-equivariant homeomorphism $\zeta\colon \Mt \to \Mt'$ such that, if $m$ is a loop of $\Mt$, then  $m$ covers $ \zeta(m)$ via $f$.
We have seen that, if $P$ is a component of $\St \sm \Mt$, then $\Ct|P$ is an almost good holed sphere whose support is a unique component $R$ of $\Og \sm \Mt'$ via $f$.
Since $f_\ast$ is bijection, if different loops $a, b$ of $\Mt$ are boundary components of  $P$, then $f(a), f(b)$ are also  different boundary components of $R$. 
Since $f_\ast$ is a bijection,  Corollary \ref{SuppAreAdjacent} (ii) implies that $\zeta|_{\pt P}$ is  a homeomorphism onto $\pt R$.
Therefore $\Ct|P$ is a good holed sphere fully supported on $R$.
Thus $\zeta\cn \Mt \to \Mt'$ continuously extends to $P$ so that $\zeta|P$ is a homeomorphism onto $R$. 
Therefore we can extend $\zeta$ to a $\td{\rho}$-equivariant homeomorphism from $\St$ to $\Og$ with the desired property. 
\end{proof}

\subsection{An outline of the proof of Proposition \ref{single}}
Proposition \ref{single} is the main proposition of this paper, and we here outline its proof.
Let $\mu'$ be a loop of $\Mt'$ and let $\ld$ be the loop of $\Lt$ with $\ett_1(\ld) = \mu'$.
Let $\ld' = f(\ld)$, which is a loop of $\Lt'$.

{\it Step 1.}
We first reduce the proposition to a similar assertion about a good holed sphere obtained from $(\St, \Ct)$ as follows.   
Find a compact subsurface $F$ of $\St$ with boundary,  such  that the interior of $F$ contains the loop $\ld$  and the multiloop  $\lf f\iv(\mu') \rf$ and that the restriction $\Ct|F$ is an almost good holed sphere (Proposition \ref{F} and Corollary \ref{Fcontains}).
Let $\Fc$ be the puncture sphere obtained by attaching a once-punctured desk long every boundary component of $F$\,;
then $F$ is a subsurface of  $\Fc$.  
We naturally extend  $\Ct|F$ to a good structure $C_\Fc = (f_{\Fc}, \rho_\id)$ on $\Fc$, so that  $\lf f\iv(\mu') \rf = \lf f_{\Fc}\iv(\mu') \rf \st F$.
Thus it suffices to show that $\lf f_{\Fc}\iv(\mu') \rf $ is a single loop (Proposition \ref{reduced}).
We also see, in particular, that  $\lf f_\Fc\iv(\ld') \rf$ is a multiloop on $\Fc$ that decomposes $C_\Fc$ into almost good holed spheres.  

{\it Step 2.}
Proposition \ref{e} yields  an embedding $\et\cn \Ht \to \hb$  associated with the projective structure $C$.
We construct an analogous embedding $\e_\Fc$ of a 3-disk into $\hb$ associated with $C_\Fc$, where the punctured sphere $\Fc$ sits in  the boundary of  the 3-disk.
(See the left picture in Figure \ref{ExamplePhi}.)
In this analogy, the punctures of $\Fc$ correspond to the limit set of the Schottky group.

{\it Step 3.}
An isotopy of  $\rs$ induces a deformation of the good projective structure $C_\Fc$, by postcomposing its developing map $f_\Fc$ with the isotopy.
Under this deformation the projective structure remains a good structure on $\Fc$.
In addition  the loop $\mu'$ on $\rs$ is also isotoped, and accordingly the pullback of $\mu'$, a multilloop, is also isotoped on $\Fc$.
We then isotope $\rs$ so that, yet with the loop $\ldp$ fixed on $\rs$, the pullback of $\ldp$ is a single loop homotopic to $\ld$ on $\Fc$.
This isotopy is realized as a composition of isotopies of $\rs$ that inductively reduce the number of the components of the pullback of $\ld'$.

We then extend this isotopy of $\rs$ to $\hb$  (Figure \ref{ExamplePhi}) so that the induced good structure on $\Fc$ still satisfies all properties analogues to  Proposition \ref{e} (i) -(iii) at each induction step. 
Then a certain product structure analogues to Proposition \ref{e} (iii) implies that, after the isotopy, the loop $\mu'$ becomes isotopic to the fixed loop $\ld'$ on the punctured sphere  fully supporting the induced structure on  $\Fc$.
We thus conclude that the pullback of $\mu'$ is  also a single loop isotopic to $\ld$.

\subsection{The proof of Proposition \ref{single}}

 {\it Step 1.} Recall that  $\Og_0$ is the compact fundamental domain for the $\Gm$-action on $\Og$ bounded by $2g$ round loops of $\Lt'$.
Accordingly $Conv(\Og_0)$ is a fundamental domain for the $\Gm$-action on $\h^3 \cup \Og$.
Then $Conv(\Og_0)$ is a compact subset of $\h^3 \cup \Og$  bounded by $2g$ disks of $\Dlt'$.
Also $\et\cn (\Ht, \Dlt) \to (\h^3 \cup \Og, \Dlt')$ is the $\rt$-equivariant lift of $\e$. 
Similarly the homeomorphism $\eta\cn S \times [0,1] \to H' \sm int({\rm Im}(\e))$ lifts to a unique $\rt$-equivariant homeomorphism $\ett: \St \times [0,1] \to (\h^3 \cup \Og) \sm int({\rm Im}(\et))$.
Set $\ett_t(s) = \ett(s, t)$.

Let $\mu'$ be a loop of $\Mt'$ and $\ld$ be its corresponding loop of $\Lt$, i.e.\ $\ett_1(\ld) = \mu'$.
Then $f(\ld)$ is the loop of $\Lt'$ with $Conv(f(\ld)) \supset \et(\ld)$ ~(Proposition \ref{e} (iii)).
Then $\ett(\, \ld \times [0,1] )$ is a compact annulus embedded properly in the product region $(\h^3 \cup \Og) \sm int({\rm Im}(\et))$ and  bounded by $\ett(\ld \times \{0\}) = \et(\ld)$ and $\ett( \ld \times \{1\}) = \mu'$. 
Let $$F_0' = \bigcup \,\{\, \gm \Og_0 \,|\, \gm \in \Gm,\, Conv(\gm \Og_0) \cap \ett(\ld \times [0,1]) \neq \emptyset\, \}.$$
Then
\begin{lemma}\label{F_0'}
$F'_0$ is a compact connected subsurface of $\Og$ bounded by (finitely many) loops of $\Lt'$, and the interior of $F'_0$ contains the loops $\mu'$ and $f(\ld)$.
\end{lemma}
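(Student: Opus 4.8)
The plan is to verify the three assertions — compactness, connectedness, and the containment of $\mu'$ and $f(\ld)$ in the interior — essentially from the definitions, using that $\ett(\ld \times [0,1])$ is a compact connected set and that $\{Conv(\gm\Og_0)\}_{\gm\in\Gm}$ is a locally finite tessellation of $\h^3\cup\Og$ by compact cells whose intersection pattern is dual to the tree $\Gt'$. First I would record that $\ett(\ld\times[0,1])$ is compact (continuous image of a compact annulus) and connected, hence it meets only finitely many of the cells $Conv(\gm\Og_0)$, by local finiteness of the tessellation; this finiteness is exactly what makes $F_0'$ a finite union of translates of $\Og_0$, hence a compact subsurface of $\Og$ whose boundary is a finite subcollection of the loops of $\Lt'$ (the only loops appearing in $\pt(\gm\Og_0)$). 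The boundary is a \emph{multiloop}, not merely a union of loops, because distinct cells $\gm\Og_0, \gm'\Og_0$ meet only along shared boundary loops of $\Lt'$, and each loop of $\Lt'$ bounds exactly two cells; so a loop of $\Lt'$ survives in $\pt F_0'$ precisely when exactly one of its two adjacent cells lies in $F_0'$, and the surviving loops are pairwise disjoint.

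For connectedness I would argue on the dual tree. Index the cells meeting $\ett(\ld\times[0,1])$ by the corresponding vertices of $\Gt'$; since $\ett(\ld\times[0,1])$ is connected and the cells are closed with connected (indeed planar, hence path-connected) intersections along boundary disks, any two of these cells are joined by a chain of cells each meeting $\ett(\ld\times[0,1])$ and each consecutive pair sharing a disk of $\Dlt'$ — this uses that a connected set meeting a union of closed tessellation cells meets a \emph{connected} sub-collection in the adjacency graph (the dual graph $\Gt'$), which is a standard fact for such cellulations. Hence the corresponding vertices span a connected (in fact, since $\Gt'$ is a tree, a \emph{subtree}) subgraph of $\Gt'$, and the union $F_0'$ of the associated cells, glued along the shared boundary disks' boundary loops, is connected.

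Finally, $\mu'$ and $f(\ld)$ lie in the interior of $F_0'$. The loop $\mu' = \ett_1(\ld)$ lies on $\ett(\ld\times\{1\})\st\ett(\ld\times[0,1])$, and $f(\ld)$ is a loop of $\Lt'$ with $\et(\ld)=\ett_0(\ld)\st Conv(f(\ld))$; in particular $Conv(f(\ld))$ meets $\ett(\ld\times[0,1])$, so $f(\ld)$ bounds two cells \emph{both} of which meet $\ett(\ld\times[0,1])$ — one sees this because $\et(\ld)$ is the $\langle\rho(\ld)\rangle$-invariant embedded line in $\rs$ with endpoints $Fix(\rho(\ld))$ separating $\rs$, so a collar of $\et(\ld)$ on either side lies in $Im(\et)$ and is pushed by $\ett$ into both half-spaces cut off by $Conv(f(\ld))$ in $\h^3\cup\Og$, meeting the two cells adjacent to $f(\ld)$. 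Hence neither of those cells contributes $f(\ld)$ to $\pt F_0'$, so $f(\ld)$ is interior. For $\mu'$, one can take a small product neighborhood $\ett(\ld\times(1-\e,1])$: it lies in the union of the cells meeting $\ett(\ld\times[0,1])$, hence in $F_0'$, and it is a two-sided collar of $\mu'$ in $\Og$, so $\mu'\st int(F_0')$.

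The main obstacle is the connectedness step — specifically, justifying cleanly that a connected compact set meeting a locally finite union of closed tessellation cells meets a connected sub-collection in the dual adjacency tree, and extracting from this that $F_0'$ (assembled by gluing those cells along shared boundary loops of $\Lt'$) is connected as a subsurface of $\Og$ rather than merely as a subset of $\h^3\cup\Og$; once the subtree structure on $\Gt'$ is in hand, the rest (compactness, the boundary being a multiloop of $\Lt'$-loops, the two interior-containment claims) is routine bookkeeping with the tessellation.
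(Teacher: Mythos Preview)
Your overall strategy matches the paper's: use compactness and connectedness of $\ett(\ld\times[0,1])$ to conclude that only finitely many cells $Conv(\gm\Og_0)$ are hit and that their union is connected, then check the two interior-containment claims. The paper phrases this by passing to the 3-dimensional set $E = \cup\{Conv(\gm\Og_0) : Conv(\gm\Og_0)\cap\ett(\ld\times[0,1])\neq\emp\}$, observing $E$ is compact and connected, and noting $Conv(F_0') = E$; your dual-tree formulation is an equivalent way of saying the same thing, and your identification of the connectedness step as the only point requiring care is accurate.

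However, your argument that $f(\ld)$ is interior contains a genuine error. You write that ``$\et(\ld)$ is the $\langle\rho(\ld)\rangle$-invariant embedded line in $\rs$ with endpoints $Fix(\rho(\ld))$''. This mischaracterizes $\ld$: it is a loop of $\Lt$, hence a lift to $\St$ of a \emph{meridian} loop of $L$ (Proposition~\ref{N}(ii)), so it is a closed loop in $\St$ and there is no loxodromic $\rho(\ld)$ in play. Moreover $\et$ maps $\Ht$ into $\h^3\cup\Og$, and $\et(\ld)\st Conv(f(\ld))\cong\ol{\h^2}$ by Proposition~\ref{e}(iii) is a closed loop inside that totally geodesic disk in $\h^3$, not a line in $\rs$. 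Your collar-and-half-space picture therefore does not apply.

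The fix is simpler than what you attempted: since $\et(\ld)=\ett(\ld\times\{0\})$ lies in $Conv(f(\ld))$, and this disk is contained in the \emph{closure} of each of the two cells $Conv(\gm\Og_0)$ adjacent to it, both closed cells meet $\ett(\ld\times[0,1])$. Hence both $\gm\Og_0$'s are included in $F_0'$, and $f(\ld)$, being their shared boundary loop, lies in $int(F_0')$. This is exactly the paper's argument. (The same observation, applied to any point of $\mu'\cap\Lt'$, also gives $\mu'\st int(F_0')$ without needing a collar: any cell whose closure meets $\mu'\st\ett(\ld\times[0,1])$ is in $F_0'$, so no boundary loop of $F_0'$ can meet $\mu'$.)
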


\begin{proof}
The multidisk $\Dlt'$ decomposes $\h^3 \cup \Og$ into the compact fundamental domains $Conv(\gm \Og_0)$ with $\gm \in \Gm$.
Since $\ett(\ld \times [0,1])$ is compact,   it intersects $Conv(\gm \Og_0)$ for only finitely many $\gm \in \Gm$.
Therefore, letting
 $$E = \cup \{\, Conv (\gm \Og_0) \,|\, \gm \in \Gm,\, Conv(\gm \Og_0) \cap \ett(\ld \times [0,1]) \neq \emptyset\, \}, $$
$E$ is compact. 
Besides, since $\ett(\ld \times [0,1])$ is connected,  $E$ is also connected. 
Thus  $E$ is a connected compact convex subset of $\h^3 \cup \Og$ bounded by (finitely many) disks of $\Dlt'$. 
Since $Conv (F'_0) = E$, thus $F_0'$ is a connected compact subsurface of $\Og$ bounded by finitely many loops of $\Lt'$.
Since $\mu' = \ett(\ld \times \{1\}) \st \ett(\ld \times [0,1])$, the interior of $F_0'$ contains $\mu'$ by the definition  of $F_0'$.  
Since the loop $\e(\ld)$ is contained in the annulus $\ett(\ld \times [0, 1])$ and in the disk $Conv (f(\ld))$ of $\Dlt'$, therefore $F'_0$ contains the adjacent components of $\Og \sm \Lt'$ along the loop  $f(\ld)$.
Therefore the interior of $F'_0$  contains $f(\ld)$. 
\end{proof}

\begin{proposition}\label{F}
There exist a compact connected subsurface $F$ of $\St$ bounded by finitely many loops of $\Lt$ and a compact connected subsurface $F'$ of $\Og$ bounded by finitely many loops of $\Lt'$ such that:\\ 
(i) $F'$ contains $F'_0$, \\
(ii) $\Ct|F$ is an almost good holed sphere supported on $F'$, and\\
(iii) $F$ contains $\et\iv(Conv(F'_0)) \cap \St$.
\end{proposition}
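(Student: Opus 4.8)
The plan is to construct $F$ and $F'$ by pulling the tiles of $\h^3\cup\Og$ that meet $Conv(F'_0)$ back through the $\rho$-equivariant graph homomorphism $\kpt\colon\Gt\to\Gt'$ of \S\ref{GraphHomomorphism}. Write $E_0=Conv(F'_0)$; by the proof of Lemma \ref{F_0'} this is a union of closed tiles $Conv(\gm\Og_0)$, and two tiles meet iff they coincide or share a face of $\Dlt'$, i.e.\ iff their dual vertices are adjacent in the tree $\Gt'$. Recall that for a vertex $v$ of $\Gt$, $v\at\cap\St$ is a component of $\St\sm\Lt$ whose support $Supp_f(v\at\cap\St)$ is exactly the tile dual to $\kpt(v)$. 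Since $\et$ is an embedding and $E_0$ is compact, $\et\iv(E_0)$ is compact and so meets only finitely many components of $\St\sm\Lt$; moreover, by Corollary \ref{eCorollary} and Proposition \ref{e}(iii), whenever $\et\iv(E_0)$ meets a component $R$ or one of its boundary loops, $Conv(Supp_f(R))$ — respectively a face of it — meets $E_0$, which forces $Supp_f(R)$ to be a tile of $F'_0$ or adjacent to one. Let $\mc V'$ be the (finite, subtree-spanning) set of vertices of $\Gt'$ dual to such tiles; then the components just described correspond to vertices of $\Gt$ lying in $\kpt\iv(\mc V')$. Because $\kp$ is a composition of Stallings folds and a graph isomorphism (Lemma \ref{decomposition}), each fold lifting to a surjective, nonexpanding, uniformly-finite-to-one graph homomorphism of universal covers, the map $\kpt$ is proper, surjective, with all fibers of size at most $c:=$ (number of components of $S\sm L$); in particular $\kpt\iv(\mc V')$ is finite.

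The main step is to enlarge this finite collection of components to a subsurface $F$ that is connected and still carries an almost good structure. Since $f$, and hence $\kpt$, need not be injective — distinct components of $\St\sm\Lt$ may share a support — the union of the closures of the chosen components need be neither connected nor saturated in the right sense. I would repair this by repeatedly (a) adjoining the components lying on $\Gt$-geodesics between those already chosen, to restore connectedness, and (b) \emph{$f$-saturating}, i.e.\ adjoining every component whose support already occurs among the chosen ones; producing from this a \emph{finite}, connected, $f$-saturated subsurface $F\subset\St$ is the crux of the proof, and uses the uniform bound $c$ on the fibers of $\kpt$. Set $F'=f(F)=\bigcup_R Supp_f(R)$. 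Then $F$ is a compact connected genus-zero (as $\St$ is planar) subsurface of $\St$ bounded by finitely many loops of $\Lt$, $F'$ is a compact connected subsurface of $\Og$ bounded by finitely many loops of $\Lt'$, and $F'\supset F'_0$ since surjectivity of $\kpt$ makes every tile of $F'_0$ the support of a component met by $\et\iv(E_0)$; this is (i). Property (iii) holds by construction: $\et\iv(E_0)\cap\St$ lies in the union of closures of the initially chosen components.

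It remains to check (ii), that $\Ct|F$ is an almost good holed sphere supported on $F'$. It has trivial holonomy and no punctures; by the path-lifting property $f(R)=Supp_f(R)$ for every component $R$, so $f(F)=F'$, and $f$ carries each boundary loop $\ell$ of $F$ onto a boundary circle of $Supp_f(R)$ for the unique chosen component $R$ that $\ell$ bounds. That circle lies in $\pt F'$, not in $int F'$, precisely because $F$ is $f$-saturated: were the tile on its far side contained in $F'$, the far component would already have been adjoined. Conversely, using $f(R)=Supp_f(R)$ together with the fact that the $f$-preimage of a boundary point of a support lies on the boundary of the component, one sees that every boundary circle of $F'$ is covered by a boundary loop of $F$, so $f(\pt F)=\pt F'$. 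Finally, $F'\supset F'_0$ contains $f(\ld)$ — a loop of $\Lt'$, hence an essential loop of $\Og$ — in its interior, so $F'$ is not a disk; a compact connected genus-zero surface with non-empty boundary that is not a disk has at least two boundary circles, so $f(\pt F)$ is disconnected, which is the last axiom for an almost good structure.

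I expect the main obstacle to be exactly the construction in step (a)--(b): in the presence of the non-injectivity of $f$, producing a single finite subsurface $F$ that is at once connected and $f$-saturated — so that no boundary loop of $F$ develops into the interior of its support, and so that $F$ genuinely restricts to an almost good holed sphere. The auxiliary input that $\kpt$ is proper with uniformly bounded fibers, which is what keeps every set in the argument finite, also requires the (routine but fiddly) analysis of how a Stallings fold behaves on universal covers.
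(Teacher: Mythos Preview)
Your approach aims for more than the proposition requires, and the extra strength is exactly where the gap lies. You want $F$ to be \emph{$f$-saturated} so that $f(\pt F)=\pt F'$; but ``almost good supported on $F'$'' only asks that each boundary loop of $F$ cover \emph{some} boundary circle of $F'$ (condition~(iii) in \S\ref{AlmostGood}), i.e.\ $f(\pt F)\subset\pt F'$. Your iterated process (a)~connect, (b)~$f$-saturate is precisely what you flag as the ``main obstacle'', and indeed it is not clear that it terminates: after $f$-saturating you must reconnect along geodesics of $\Gt$, and since $\kpt$ is not an immersion (two boundary loops of a single tile of $\St\sm\Lt$ can have the same $f$-image), those geodesics can pass through vertices whose $\kpt$-image lies \emph{outside} the current support set, so step~(b) may genuinely enlarge the supports again. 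The uniform bound on fibres of $\kpt$ does not by itself control this growth.

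The paper sidesteps the issue with a two-pass (non-iterative) construction. First pick \emph{any} compact connected $F_0\subset\St$ bounded by loops of $\Lt$ and containing $\et\iv(Conv(F'_0))\cap\St$; this is possible because the latter is a finite union of closed tiles of $\St\sm\Lt$. Set $F'=\bigcup_Q cl(Supp_f(Q))$ over the tiles $Q$ of $F_0$, and then take $F$ to be the component of $\et\iv(Conv(F'))\cap\St$ containing $F_0$. No saturation is attempted: other components of $\et\iv(Conv(F'))\cap\St$ are simply discarded. The point is that if $\ell\subset\pt F$, the tile $R'\subset\St\sm F$ adjacent across $\ell$ satisfies $Supp_f(R')\not\subset F'$ (else $R'$ would lie in the same component of $\et\iv(Conv(F'))\cap\St$ as $F$), so by Corollary~\ref{SuppAreAdjacent} $f(\ell)\in\pt F'$. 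That $f(\pt F)$ is not a single loop then follows from the path-lifting argument at the end of the proof of Proposition~\ref{N}(iii), not from $f(\pt F)=\pt F'$. Dropping the saturation requirement is what makes the construction finite and immediate.
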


\begin{proof}
For a component $R$ of $\St \sm \Lt$, we have either  $Supp(R) \st F'_0$ or $Supp(R) \st \St \sm F'_0$ (here we mean $\Ct|R$ by $R$, abusing notation).
By Corollary \ref{eCorollary}, if $Supp(R) \st F'_0$, then $\et(R) \st Conv(F'_0)$ and, if $R \st \St \sm F'_0$, then $\et(R) \cap Conv(F'_0) = \emptyset$.
Therefore
$\et^{\,-1}(Conv(F'_0)) \cap \St\  (=: X_0)$ is equal to 
 $$\bigcup\, \{cl(R) ~|~R  ~is~ a~ component~ of~  \St \sm \Lt,~ Supp_f(R) \st F'_0\},$$
where $cl(R)$ denote the closure of $R$.

Since $f$ is $\rt$-equivariant,  for each $\gm \in \Gm$, there is at least one but at most finitely many components of $\St \sm \Lt$ supported on $\gm \Og_0$ (by Corollary \ref{SuppAreAdjacent} (ii)).
Therefore $X_0$ is a compact subsurface of $\St$ bounded by finitely many loops of $\Lt$, but $X_0$ is $not$ necessarily connected. 
Thus we choose a compact connected subsurface $F_0$ of $\St$ bounded by finitely many loops of $\Lt$ such that $F_0 \supset X_0$.
Then each component $Q$ of $F_0 \sm \Lt$ is a component of $\St \sm \Lt$, and $Q$ is supported on a unique component of $\Og \sm \Lt'$.
Let 
$$F' =  \bigcup\, cl ( {\it Supp}(Q \hspace{.2mm})),$$
 where $Q$ varies over all components of $F_0 \sm \Lt$.
By the definition of $F_0$ and $F'$,  we have $F'_0 \st  F'$; thus (i) holds.

Since $F_0$ is compact, by Corollary \ref{SuppAreAdjacent}, we see that $F'$ is also a compact connected subsurface of $\Og$ bounded by finitely many loops of $\Lt'$.
We see that  $\et\iv(Conv(F')) \cap \St$ is the union of the closures of finitely many components $R$ of $\St \sm \Lt$ such that ${\it Supp}(R) \st F'$.
Then $\et\iv(Conv(F')) \cap \St$ is a compact subsurface of $\St$ bounded by finitely many loops of $\Lt$, but again it is $not$ necessarily connected. 
Since $F_0$ is connected, $\et\iv(Conv(F')) \cap \St$ contains a connected component containing $F_0$ by the definition of $F'$.
Let $F$ be this component of $\et\iv(Conv(F')) \cap \St$. 
Since $F_0$ contains $X_0$, then $F$ also contains $X_0$; thus (iii) holds.

Since $F$ is a compact subsurface of $\St$,  its  genus is zero and it has at least two boundary components. 
By  Corollary \ref{SuppAreAdjacent},  $\pt F$ covers $\pt F'$ via $f$. 
We see that  $f(\pt F)$ has at least two components, similarly to Proposition \ref{N} (iii).
Therefore $\Ct|F$ is an almost good holed sphere supported on $F'$; thus (ii) holds.
\end{proof}

\begin{corollary}\label{Fcontains}
The multiloop $\lf f\iv(\mu') \rf$ is contained in the interior of the compact subsurface $F$ of $\St$ given by in Proposition \ref{F}. 
\end{corollary}
\begin{proof}
For an arbitrary component of $\St \sm \Lt$ that is disjoint from $F$, let $R$ denote its closure. 
Then it suffices to show that $R\, \cap \, \lf f\iv(\mu') \rf = \emptyset$.
By Proposition \ref{F_0'} (iii),  ${\it Supp}_f(R)$ disjoint from $int(F'_0)$.
Therefore, by Lemma \ref{F_0'}, $~\mu'$ and ${\it Supp}_f(R)$ are disjoint.
Hence, by Lemma \ref{support}, $R \cap \lf f\iv(\mu') \rf = \emptyset$.
\end{proof}

{\it Step2.} Set $\Ct|F = (f_F, \rho_{id})$, where  $f_F\cn F \to \rs$ is its developing map and $\rho_{id}\cn \po(S) \to \psl$ is the trivial representation.
Then, in order to prove Proposition \ref{single}, it suffices to show that, by Corollary \ref{Fcontains}, 
$\lf f_F\iv(\mu') \rf = \lf f\iv(\mu') \rf \cap F$ is a single loop on $F$.

Every boundary components of $F$ bound a disk of $\Dlt$.
The union of such disks bound a 3-disk $H_F$ in $\Ht$.
Then $H_F \cap \St = F$. 
Let $\e_F\cn H_F \to \h^3$ be the restriction of $\et\colon \Ht \to \h^3$ to $H_F$.
Accordingly, restricting  the embedding  $\ett\colon \St \times [0,1] \to \hb$ to $F \times [0,1]$, we obtain an embedding $\eta_F: F \times [0,1] \to H'$.
Let $\check{F}$ be the punctured sphere obtained by attaching a once-punctured disk along each boundary component of $F$, and
let $p_1, p_2, \dt, p_n$ be the punctures of $\Fc$, where $n$ is the number of the boundary components of $F$.
Then $\Fc \cup p_1 \cup p_2 \dt \cup p_n =: \hat{F}$ is a 2-sphere.
Let $H_{\Fh}$ be a closed 3-disk and identify its boundary  with  $\Fh$.
Then similarly the multiloop $\pt F\, (\st \Fh)$ bounds a multidisk property embedded in $H_{\Fh}$.
Then the multidisk bounds a 3-disk in $H_{\Fh}$\,; we naturally identify this 3-disk with $H_F$, so that $\pt H_F \cap \Fh = F$.

With respect to the inclusion $H_F \st H_\Fh$, we will extend $\e_F\colon H_F \to \hb$ to an embedding $\e_\Fh\colon H_\Fh \to \hb$ such that
\begin{itemize}
\item $\e_\Fh$ takes $p_1, p_2, \dt, p_n$ to $\rs$, and the rest to $\h^3$, so that $\rs \sm {\rm Im}(\e_\Fh)$ is homeomorphic to $\Fc$, and  
\item $\hb \sm \im(\e_\Fh)$ is homeomorphic to $\Fc \times (0,1]$, and the product extends to $\Fh \times \{0\}$, inducing a homeomorphism from $ \e_\Fh(\Fc)$ to $\Fh\times \{0\}$.
\end{itemize}

Since $F$ is embedded in both $\St$ and $\Fh$,   each boundary component $\ell $ of $F$ bounds a disk of $\Dlt$  in $\Ht$ and also a disk in $H_\Fh$ that is a component of   $\pt H_F \sm F$\,; 
we denote both disks by $D_\ell$. 
Let $F_\ell$ be the component of $\St \sm F$ bounded by $\ell$.
Then $F_\ell \sm \Lt$ is a union of infinitely many components of $\St \sm \Lt$.
Since $H_F \st H_\Fh$, we let $H_\ell$ be the component of $H_{\Fh} \sm H_F$ bounded by $D_\ell$.
Then $H_\ell$ is (topologically) a 3-disk whose boundary sphere is the union of the disk $D_\ell$ and the component of $\Fh \sm F$ bounded by $\ell$, whose interior contains a unique puncture point $p(\ell) \in \{p_1, p_2, \dt, p_n\}$. 

In $\hb$, $~Conv (f(\ell)) \cong \ol{\h^2}$ is a boundary component of $Conv (F')$. 
Then, let $X_\ell$ be the component of $\hb \sm Conv(F')$ bounded by $Conv(f(\ell))$.
Let $Y_\ell$ be the component of $\rs \sm F'$ bounded by $f(\ell)$, so that  $X_\ell \cap \rs = Y_\ell$.

\begin{proposition}\label{R_i}
There is a sequence  $(R_i)_{i=1}^\infty$ of distinct connected components of $F_\ell \sm \Lt$ such that\\
(i) $\ell $ is a boundary component of $R_1$,\\ 
(ii)  $R_i$ and $R_{i+1}$ are adjacent subsurfaces of $F_\ell$ for all $i = 1,2, 3, \dt$,\\
(iii) $\et(R_i) \st X_\ell$ for all $i = 1,2, 3,\dt$ , and\\
(iv) $(\et(R_i))_{i=1}^\infty$ converges to a limit point of $\Gm$ (contained in $Y_\ell$). 
(See Figure \ref{G_l}.)
\end{proposition}

\begin{proof} 
Let $R_1$ be the component of $F_\ell \sm \Lt$  bounded by $\ell  =: \ell_ 0$ (\,(i)\,).
Then $\Ct|R_1$ is a good holed sphere supported on a unique component $\Og_1$ of  $\Og \sm \Lt'$ (via $f$).
By Corollary \ref{SuppAreAdjacent}, $~\Og_1$ and $F'$ are adjacent subsurfaces of $\Og$ sharing  the boundary component  $f(\ell)$.
Thus $\Og_1 \st Y_\ell$, and by Corollary \ref{eCorollary}, $~\et(R_1) \st Conv(\Og_1) \st X_\ell$.

We inductively define $R_i$ for $i  \geq 2$.  
For $i \geq 1$, suppose that we have components $R_1, R_2, \dt, R_i$ satisfying (ii) with $R_1$ as above.
Then, let $\Og_i$ be the component of $\Og \sm \Lt'$ that is a support of  $\Ct|R_i$.
Let $\ell _{i -1} (\st \Lt)$ denote the common boundary component of $R_{i-1}$ and $R_i$. 
Then, by the definition of an almost good holed sphere, $f(\pt R_i)$ is a union of at least $2$ boundary components of $\Og_i$.
Thus we can pick a boundary component $\ell _i$ of $R_i$ so that $f(\ell_ i)$ and $f(\ell_ {i-1})$ are different boundary components of $\Og_i$.
Let $R_{i+1}$ be the component of $F_\ell \sm \Lt$ adjacent to $R_i$ along the boundary component  $\ell _i$. 

\begin{lemma}\label{Omega}
For each $k \geq 1$, $~\Og_1, \Og_2, \dt, \Og_k$ are distinct components of $Y_\ell \sm \Lt'$, and $cl(\sq_{i =1}^k \Og_i)$ is a holed sphere in $Y_\ell$ bounded by finitely many loops of $\Lt'$ such that there is a 2-disk component of  $Y_\ell \sm cl(\sq_{i = 1}^k \Og_i)$  bounded by $f(\ell_ {k})$.
\end{lemma}

\begin{proof}
(See Figure \ref{Nesting}.)
Clearly this claim holds for $k =1$.
Suppose that this claim holds for some $k \geq 1$.
Let $B_k$ be the 2-disk component of $Y_\ell \sm cl(\sq_{i =1}^k \Og_i)$ bounded by $f(\ell_ k)$.
From the construction of $(R_i)_{i =1}^\infty$, we see that $\Og_k$ and $\Og_{k+1}$ are adjacent along the loop $f(\ell_ k)$.
Then $\Og_{k+1}$ is a  sphere with $2g$ holes contained in $B_k$.
Therefore  $\Og_1, \Og_2, \dt, \Og_k, \Og_{k+1}$ are distinct components of $Y_\ell \sm \Lt'$, and $cl(\sq_{i = 1}^{k+1} \Og_i)$ is again a holed sphere in $Y_\ell$ bounded by finitely many loops of $\Lt'$.
Since $f(\ell_ k)$ and $f(\ell_ {k+1})$ are different boundary components of $\Og_{k+1}$,  there is  a component of $B_k \sm \Og_{k+1}$ bounded by $f(\ell _{k+1})$, which is a disk; then this component is also a component of  $Y_\ell \sm cl(\sq_{i = 1}^{k+1} \Og_i)$ bounded by $f(\ell_k)$. 
\end{proof}

By Lemma \ref{Omega}, $~\Og_i \st Y_\ell$ for all $i \geq 1$.
Therefore $\et(R_i) \st Conv(\Og_i) \st X_\ell$ (\,(iii)\,).
Lemma \ref{Omega} also implies that $(f(\ell_ i))_{i=1}^\infty$  is a sequence of $nested$ loops of $\Lt'$ contained in $Y_\ell$,
 that is, $Y_\ell \sm \sq_{i=1}^\infty f(\ell_ i)$ is a union of disjoint cylinders bounded by $f(\ell_ {i-1})$ and $f(\ell_ i)$ with $i = 1,2, \dt$ (see Figure \ref{nested}) and a point to which $f(\ell_i)$ converges as $i \to \infty$. 
Since $\et(R_i) \st Conv(\Og_i)$ and  $f(\ell_ {i-1})$ and $f(\ell_ {i})$ are boundary components of $\Og_i$, thus $\et(R_i)$ converges to the same limit point as $i \to \infty$ (\,(iv)\,).
\end{proof}

\begin{figure}[h]
\begin{overpic}[scale= .5%,grid,tics=10
]{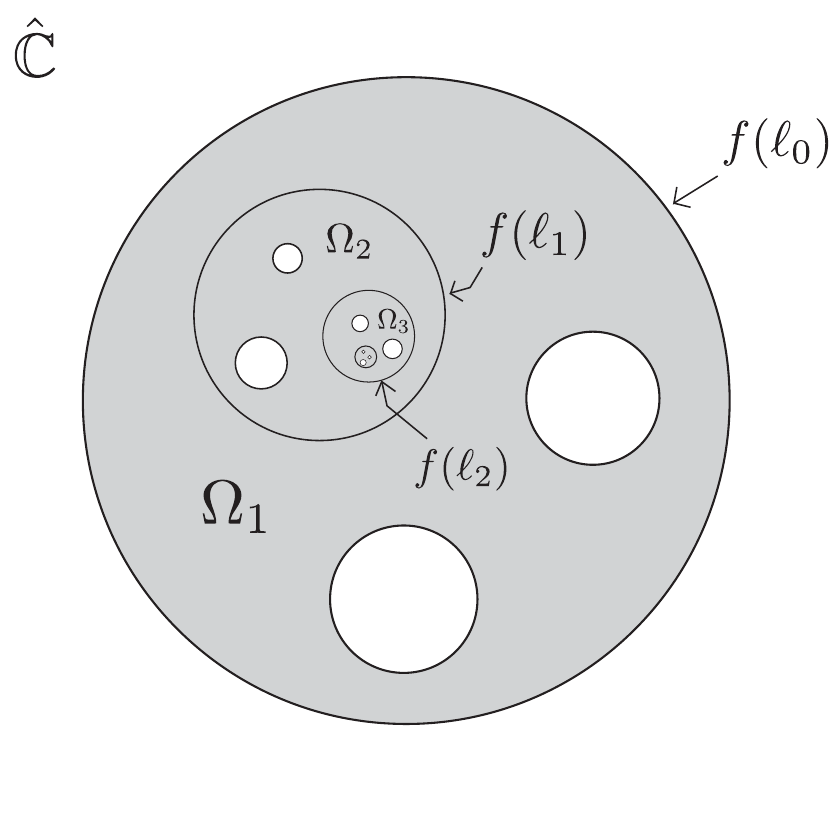}
        \put(78 ,10 ){\Large $F'$}  
      \end{overpic}
      \caption{The shaded region is $\sq_{i=1}^k \Og_i$.}
\label{Nesting}
 \end{figure}

Let $G_\ell$ be  the union of $cl(R_i)$ with $i =1,2, \dt$, obtained by Proposition \ref{R_i}.
Then $G_\ell$ is  an unbounded  connected  subsurface of $\St$ contained in $F_\ell$ and  bounded by infinitely many loops of $\Lt$.
Then $\pt G_\ell$ is a multiloop on $\St$ bounding  infinitely many disks of $\Dlt$ in $\Ht$.
Let $H_{G_\ell}$ be the closed (noncompact) subset of $\Ht$ bounded by these disks of $\Dlt$, so that $G_\ell = H_{G_\ell} \cap \St$. 
Then  $H_{G_\ell}$ is homeomorphic to the closed 3-disk $\D^3$ minus a point in $\pt \D^3$,  which corresponds to the limit point in Proposition \ref{R_i} (iv). 
Thus we can naturally identify $H_{G_\ell}$ with $H_\ell$ minus the puncture point $p(\ell)$.
\begin{figure}[h]
\begin{overpic}[scale=.7%,grid,tics=10
]{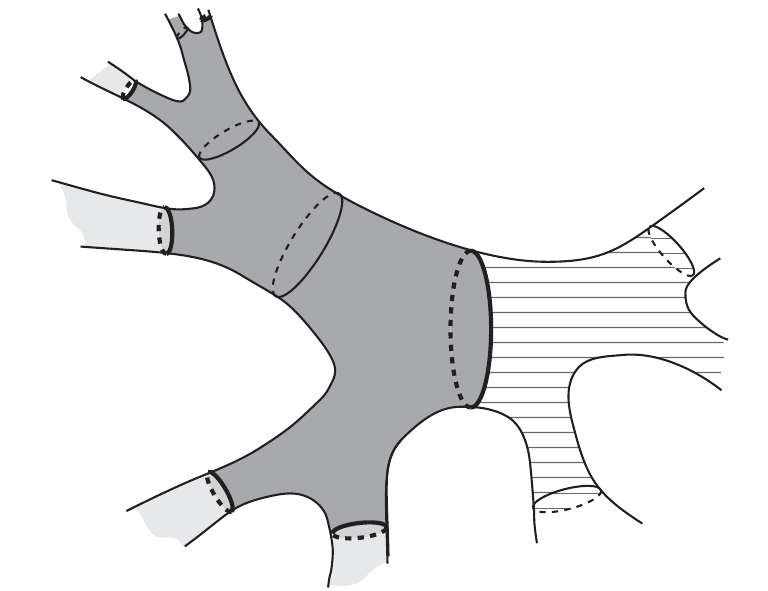}
      %   \put( , ){}  
 \put(75,35){\large $F$} 
 \put(48,30){\large $R_1$}  
\put(30,48){\large $R_2$}  
 \put(24,62){ $R_3$}  
 \put(42,58){$G_\ell = \cup_i R_i$}  
 \put(20, 30){\large $F_\ell$}
      \end{overpic}
\caption{}\label{G_l}
\end{figure}

The set of the ends of $\Ht$ is homeomorphically identified with the limit set of $\Gm$, which is a cantor set. 
Then, $\et\colon \Ht \to \hb$ continuously extends to the ends of $\Ht$, realizing the homeomorphism,  since $\et$ is $\td{\rho}$-equivariant.
In particular,  $\et$ takes the limit of $(R_i)_i^\infty$  to the limit point of $\Gm$ in Proposition \ref{R_i} (vi).
Therefore, by identifying  $H_\ell$ with the union of $H_{G_\ell}$ and its endpoint,  the embedding $\e_F\colon H_F \to \hb$ extends to an embedding of  $H_F \cup H_\ell$ into $\hb$, taking $p(\ell)$ to the limit point. 

By  Proposition \ref{R_i} (iii), for different boundary components $\ell $ of $F$, corresponding $H_{G_\ell}$ are contained in  different components of $\hb \sm Conv(F')$.
Therefore,  by applying such extension for all boundary components $\ell$ of $F$, we obtain an embedding $\e_{\Fh}\colon H_{\Fh} \to \hb$.
Then $\e_\Fh$ takes $H_{\Fh} \sm \{p_1, p_2, \dt, p_n\}$ to $\h^3$ and $p_1, p_2, \dt, p_n$ to different limit points of $\Gm$  on $\rs$.

Next we show that $\hb \sm {\rm Im}(\e_{\Fh})$ has a natural product structure. 
Let $\Fb = F \cup (\cup_\ell G_\ell)$, where the second union runs over all boundary components $\ell $ of $F$. 
Then $\Fb$ is a connected subsurface of $\St$ bounded by infinitely many loops of $\Lt$.
By the identification of $H_{G_\ell}$ and $H_\ell \sm p(\ell)$,  the inclusion $F \st \Fc$ extends to the inclusion $\Fb \st \Fc$.
Then  $\Fc \sm \Fb$ is a union of infinitely many disjoint 2-disks bounded by boundary components of $\Fb$.

On the other hand, when $\Fb$ is regarded as a subsurface of $\St$, its boundary bounds infinitely many disks of $\Dlt$ in $\Ht$.
For each boundary component $m$ of $\Fb$, let $D_m$ denote the disk of $\Dlt$ bounded by $m$.
(For the following discussion, see Figure \ref{product}.)
Then $\et(D_m)$ is a 2-disk and $\ett(m \times  [0,1])$ is an annulus embedded in $\hb$.
Since $\et(D_m)$ and $\ett(m \times  [0,1])$ share a boundary component,  their union, denoted by $E'_m$\,, is a 2-disk properly embedded in $\hb$.
Then the multidisk $\sq_m E'_m$ is property embedded in $\hb$, where the union runs over all boundary components $m$ of $\Fb$.  
We can see that  $\sq E'_m$ bounds the union of $\ett(\Fb \times [0,1])$ and ${\rm Im}(\e_\Fh)$, which is homeomorphic to a closed 3-disk.

For each boundary component $m$ of $\Fb$, let $D'_m$ be the (disk) component of $\rs \sm \ett(\Fb \times \{1\})$, bounded by $\ett(m \times \{1\})$.
Then the 2-disks $D_m'$ and $E_m'$ has disjoint interior and share the boundary component $\ett(m \times \{1\})$. 
Then $D'_m \cup E_m'$   is a 2-sphere, and it bounds a 3-disk $Q_m'$ in $\hb$.
Then $Q_m'$ is the component of $\hb \sm (\ett(\Fb \times [0,1]) \cup {\rm Im}(\e_\Fh))$ bounded by $E_m'$.
To give a natural product structure on $Q_m'$, 
choose a homeomorphism $\eta_m\colon D_m \times [0,1] \to Q_m'$ such that $\eta_m(D_m \times \{0\}) = \et(D_m)$,  $~\eta_m(D_m \times \{1\}) = D_m'$ and $\eta_m|_{\pt D_m \times [0,1]} = \ett|_{m \times [0,1]}$.
Let $\eta_\Fb: \Fb \times [0,1] \to \hb$ denote the restriction of $\ett: \St \times [0,1] \to \hb$ to $\Fb \times [0,1]$.
Then, since $\Fc \sm \Fb$ is the union of the disks bounded by boundary components $m$ of $\Fb$,
the product structures given by
$\eta_{\Fb}$ and $\eta_m$ match up along $\pt \Fb \times [0,1]$ and they yield  a homeomorphism $$\eta_\Fc\colon \Fc \times [0,1] \to \hb \,\sm \,[\,int({\rm Im}(\e_\Fh))\, \cup \, (\sq_{i=1}^n \e_\Fh(p_i)) \,]$$ such that $\eta_\Fc(\Fc \times \{0\}) = \e_\Fh(\Fc)$ and $\eta_\Fc(\Fc \times  \{1\}) = \rs \sm  \sq_{i=1}^n \e_\Fh(p_i)$. 

\begin{figure}[h]
\begin{overpic}[width=5in%, grid,tics=10
]{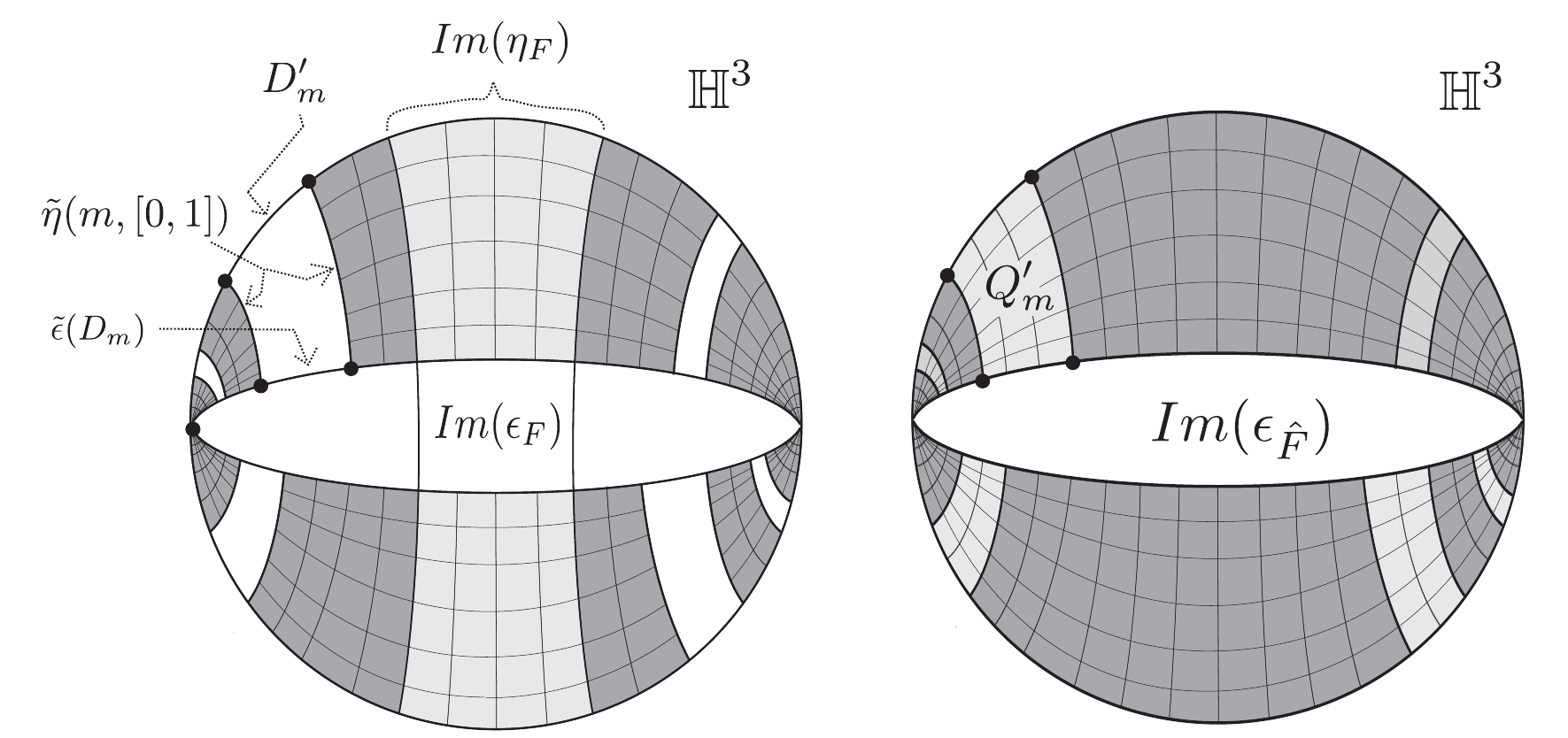}
\put(2,20){\tiny $\e_\Fh(p(\ell))$}
\put(16, 19){\small $\e_\Fh(H_\ell)$}
      %   \put( , ){}  
      \end{overpic}
\caption{A schematic for the product structure given by $\eta_\Fc$. On the left, the darkest region corresponds to $\ett(G_\ell \times [0,1])$'s, and, On the right, to $\ett(\Fb \times [0,1])$.} 
\label{product}
\end{figure}

Next, since $F$ is a subsurface of $\Fc$, we extend the almost good structure $\Ct|F = (f|_F, \rho_\id)$ on the holed sphere $F$ supported on $F'$ to a good structure on the punctured sphere $\Fc$.
Each boundary component $\ell$ of $F$ bounds a component of $\Fc \sm F$, which is  a 2-disk with  the puncture point $p(\ell)$.
Accordingly $\e_\Fh(p(\ell))$ is contained in the component of $\rs \sm F'$ bounded by the loop $f(\ell)$ that is also homeomorphic to a disk.  
Since $\e_{\Fh}$ takes different punctures points of $\Fc$ to different points on $\rs$. 
Therefore, as in \S \ref{AlmostGood}, we can uniquely extend the  almost good projective structure $\Ct|F$ on $F$ to a good projective structure $C_{\Fc} = (f_{\Fc}, \rho_{id})$ on $\Fc$ such that $f_\Fc(p(\ell)) = \e_\Fh(p(\ell))$ for all boundary components $\ell $ of $F$.
Then ${\it Supp}(C_\Fc)$ is $\rs \sm  \sq_{i=1}^n \e_\Fh(p_i)$.

We have seen that, in order to prove Proposition \ref{single}, it suffices to show that $\lf f_F\iv(\mu') \rf = \lf f\iv(\mu') \rf \cap F$ is a single loop on $F$.
If $\ell$ is a boundary component of $F$, let  $R_\ell$ be a component of $\Fc \sm F$ bounded by $\ell$.
 Then $R_\ell$ covers, via $f_\Fc$, a component of $\rs \sm F'$ minus the image of the corresponding puncture point. 
Since $\mu'$ is contained in the interior of $F'$, we have $f_\Fc\iv(\mu') \cap R_\ell = \emptyset$.
Thus the proof of Proposition \ref{single} is reduced to show:

\begin{proposition}\label{reduced}  
$\lf f_\Fc\iv(\mu') \rf$ is a single loop on $\Fc$.
\end{proposition}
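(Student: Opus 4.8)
The plan is to carry out the final step of the outline: to simplify, by an ambient isotopy, the way $Im(\e_\Fh)$ meets the geodesic plane $Conv(\ld')$, and then to read off the conclusion from the correspondence between $f_\Fc$ and $\e_\Fh$. Recall from the construction of $\e_\Fh$ (and from Lemma~\ref{phi=id}) that $Im(\e_\Fh)\cap Conv(\ld')$ is a finite disjoint union of $2$-disks properly embedded in the $3$-ball $Im(\e_\Fh)$, that exactly one of them is $\e_\Fh(D_\ld)$, bounded by $\e_\Fh(\ld)$, and that $\lf f_\Fc\iv(\ld')\rf=\pt\,\e_\Fh\iv(Conv(\ld'))$, so that the boundaries of the remaining intersection disks are the other components of $f_\Fc\iv(\ld')$. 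Note also that $\ld$ is essential on $\Fc$ (it is a loop of $\Mt$, essential on $\St$ and contained in $F$), hence $\e_\Fh(D_\ld)$ cannot be isotoped off $Conv(\ld')$.

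First I would build a homeomorphism $\phi\cn\hb\to\hb$, isotopic to the identity, with $\phi(Im(\e_\Fh))\cap Conv(\ld')$ a single $2$-disk bounded by $\phi(\e_\Fh(\ld))$. The plane $Conv(\ld')\cong\ol{\h^2}$ separates $\hb$ into two half-balls; among the intersection disks distinct from $\e_\Fh(D_\ld)$, choose one, $\Dl$, that cuts off from the $3$-ball $Im(\e_\Fh)$ a sub-$3$-ball $B$ meeting $Conv(\ld')$ only in $\Dl$, and run an ambient isotopy of $\hb$ supported near $B$ that pushes $B$ across $Conv(\ld')$. This removes the intersection $\Dl$ and, since $B$ met $Conv(\ld')$ only in $\Dl$, creates no new ones. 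Iterating, after finitely many steps only one intersection disk survives, and by the essentiality of $\ld$ noted above it must be the image of $\e_\Fh(D_\ld)$. Composing these isotopies gives $\phi$; extending the ambient isotopy to all of $\hb$ also produces an isotopy of $\rs$ fixing $\e_\Fh(P_\Fc)$ up to isotopy, under which $\phi\iv(\ld')$ is a loop on $\rs$ avoiding $\e_\Fh(P_\Fc)$.

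Next I would identify $\phi\iv(\ld')$ up to isotopy in the punctured sphere $\rs\sm\e_\Fh(P_\Fc)$. After the cleanup, the annular end of $Conv(\ld')$ running from $\phi(\e_\Fh(\ld))$ out to $\ld'$ meets $\phi(Im(\e_\Fh))$ only along $\phi(\e_\Fh(\ld))$; pulling it back by $\phi$ yields an annulus in $\hb$ from $\e_\Fh(\ld)$ to $\phi\iv(\ld')$ disjoint from $int(Im(\e_\Fh))$. On the other hand, the product structure $\eta_\Fc$ gives $\mu'=\ett_1(\ld)=\eta_\Fc(\ld\times\{1\})$, and $\ett(\ld\times[0,1])$ is an annulus from $\e_\Fh(\ld)$ to $\mu'$ lying in $\hb\sm int(Im(\e_\Fh))$. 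Glueing the two annuli along $\e_\Fh(\ld)$ produces an annulus from $\phi\iv(\ld')$ to $\mu'$ embedded in $\hb\sm int(Im(\e_\Fh))\cong\Fc\times[0,1]$ (away from the punctures $\e_\Fh(P_\Fc)$), with both boundary loops on the end $\rs\sm\e_\Fh(P_\Fc)=\Fc\times\{1\}$; since that end carries the full product topology, the annulus is isotopic rel boundary into $\rs\sm\e_\Fh(P_\Fc)$, so $\phi\iv(\ld')$ is isotopic to $\mu'$ there.

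Finally I would transport the conclusion. The homeomorphism $\phi$ carries the good structure $C_\Fc=(f_\Fc,\rho_{id})$ to the good structure $(\phi\cc f_\Fc,\rho_{id})$ on $\Fc$, whose associated embedding in the sense of Lemma~\ref{phi=id} is $\phi\cc\e_\Fh\cn H_\Fh\to\hb$; since $(\phi\cc\e_\Fh)(H_\Fh)\cap Conv(\ld')$ is the single disk $\phi(\e_\Fh(D_\ld))$, Lemma~\ref{phi=id} gives that $\lf(\phi\cc f_\Fc)\iv(\ld')\rf=\pt\,(\phi\cc\e_\Fh)\iv(Conv(\ld'))$ is a single loop on $\Fc$. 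But $(\phi\cc f_\Fc)\iv(\ld')=f_\Fc\iv(\phi\iv(\ld'))$, and $f_\Fc$ has the path-lifting property on $\rs\sm\e_\Fh(P_\Fc)$, so the isotopy from $\phi\iv(\ld')$ to $\mu'$ lifts to an isotopy of $\Fc$ carrying $f_\Fc\iv(\phi\iv(\ld'))$ onto $f_\Fc\iv(\mu')$; hence $\lf f_\Fc\iv(\mu')\rf$ is a single loop, which is Proposition~\ref{reduced}. I expect the main obstacle to be the middle two steps rather than the last: producing $\phi$ is routine cut-and-push, but one must genuinely invoke the essentiality of $\ld$ to be sure exactly one intersection disk survives, and — more delicately — track the product structure $\eta_\Fc$ (hence the loop $\mu'$) through the isotopy, so as to guarantee that the loop one is left with is isotopic to $\mu'$ and not to one of the other loops of $\lf f_\Fc\iv(\ld')\rf$.
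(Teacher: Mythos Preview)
Your overall three–step strategy is exactly the paper's: build a homeomorphism $\phi$ of $\hb$ so that $\phi(Im(\e_\Fh))$ meets $Conv(\ld')$ in a single disk, identify $\phi\iv(\ld')$ with $\mu'$ via the product structure $\eta_\Fc$, and then lift the isotopy through $f_\Fc$. Steps~2 and the lifting in Step~3 match the paper's short proof of Proposition~\ref{reduced} from Proposition~\ref{phi} almost verbatim.

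The gap is in the bridge between Step~1 and Step~3. You write that ``Lemma~\ref{phi=id} gives that $\lf(\phi\cc f_\Fc)\iv(\ld')\rf=\pt\,(\phi\cc\e_\Fh)\iv(Conv(\ld'))$''. But Lemma~\ref{phi=id} is not a functorial statement; its proof uses Proposition~\ref{e}~(i),(iii), i.e.\ the fact that $\e_\Fh$ carries the specific cellular disks of $\Dlt\cap H_F$ into $D'_{\ld'}$ and that these disks are exactly those bounded by $\lf f_\Fc\iv(\ld')\rf$. After postcomposing with an arbitrary ambient isotopy $\phi$, the disk $\phi\iv(D'_{\ld'})$ need not meet $H_\Fh$ along the cellular disks at all, and there is no a priori relation between $\e_\Fh\iv(\phi\iv(D'_{\ld'}))\cap\Fc$ and $f_\Fc\iv(\phi\iv(\ld'))$: the maps $\e_\Fh|_\Fc$ and $f_\Fc$ land in $\h^3$ and in $\rs$ respectively and are tied together only at the punctures. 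Your ``generic push of $B$ across $Conv(\ld')$'' fixes nothing about how $\phi|_{\rs}$ behaves away from tiny neighborhoods of the moved punctures, so you have no control over $\lf f_\Fc\iv(\phi\iv(\ld'))\rf$; in particular it could acquire new essential components rather than lose one.

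This is precisely why the paper's proof of Proposition~\ref{phi} is long: it packages the needed compatibility as the inductive hypotheses (I),(II),(III) and proves in Lemma~\ref{InductionLemma} that a \emph{specific} isotopy (moving the punctures of the outermost piece $Q$ across $\ld'$ along chosen arcs, then extending carefully into $\h^3$) removes exactly one loop from $\lf f_{\phi_1}\iv(\ld')\rf$ \emph{and} keeps $\e_{\phi_2}\iv(D'_{\ld'})$ bounded by $\lf f_{\phi_2}\iv(\ld')\rf$. Your Step~1 is the right picture, but to make the argument go through you must either reprove the content of Lemma~\ref{InductionLemma} (maintain condition~(II) through each push) or find an independent reason why the $\e_\Fh$/$f_\Fc$ correspondence persists under your isotopy; simply citing Lemma~\ref{phi=id} does not do this.
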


Recall that $\ld$ and $\mu'$ are the loops on $\Fc$ and $\rs$, respectively, such that $\mu'  = \ett(\ld \times \{1\})$, which is equal to $\eta_{\Fc}(\ld\times \{1\})$.
Also $\lambda' = f(\ld)$ is the loop of $\Lt$ satisfying $\e_{\Fh}(\ld) \st  Conv(\ld') \cong \ol{\h^2}$.
Let $D'_\ldp =  Conv(\ld')$.

\begin{lemma}\label{phi=id}
$\e_\Fh\iv(D'_{\ld'})$ is a multidisk properly embedded in $H_\Fh$ bounded by the multiloop $\lf f_{\Fc}\iv(\ld') \rf$.
\end{lemma}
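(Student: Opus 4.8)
The plan is to read off $\e_{\Fh}\iv(D'_{\ld'})$ directly from the cell structures rather than from a transversality argument. The key observation is that $D'_{\ld'}=Conv(\ld')$ is itself one of the $2$-cells of $(\h^3\cup\Og,\Dlt')$, and that by Proposition \ref{e}(i),(iii) together with Corollary \ref{eCorollary} the embedding $\et$ (hence its extension $\e_{\Fh}$) sends each cell of $(\Ht,\Dlt)$ into a cell of $(\h^3\cup\Og,\Dlt')$ compatibly with faces; so I expect $\e_{\Fh}\iv(D'_{\ld'})$ to be literally $\bigsqcup\{D_\ell\}$, the union of the disks $D_\ell$ of $\Dlt$ bounded by those loops $\ell$ of $\Lt$ that lie in $F$ and satisfy $f(\ell)=\ld'$. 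To begin, I would discard the attached pieces: for each boundary component $\ell$ of $F$ the component $H_\ell$ of $H_{\Fh}\sm H_F$ is carried by $\e_{\Fh}$ into $X_\ell$, the component of $\hb\sm Conv(F')$ bounded by $Conv(f(\ell))$, since that is precisely how $\e_{\Fh}$ was built from $\et$ on the pieces $R_i$ of Lemma \ref{R_i}(iii). Because $\ld'\in int(F'_0)\subset int(F')$ by Lemma \ref{F_0'} and Proposition \ref{F}(i), we have $D'_{\ld'}=Conv(\ld')\subset Conv(F')$, so $\e_{\Fh}(H_\ell)\cap D'_{\ld'}\subset X_\ell\cap Conv(F')=\emp$; hence $\e_{\Fh}\iv(D'_{\ld'})\subset H_F$, and it remains only to analyse $\et\iv(D'_{\ld'})\cap H_F$.

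For a component $R$ of $\St\sm\Lt$ contained in $F$, let $\hat R$ be its dual $3$-cell of $(\Ht,\Dlt)$, with boundary disks $D_m$ ($m$ ranging over the boundary loops of $R$), and let $\gm_R\Og_0$ be the support of $\Ct|R$ (Corollary \ref{SuppAreAdjacent}). By Proposition \ref{e}(i) and Corollary \ref{eCorollary}, $\et$ embeds $\hat R$ into the $3$-cell $Conv(\gm_R\Og_0)$, sending $R$ properly into it so that $\et(int(R))\subset int(Conv(\gm_R\Og_0))$, and by Proposition \ref{e}(i),(iii) it sends each $D_m$ into the face $Conv(f(m))$ of $Conv(\gm_R\Og_0)$. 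If $\ld'$ is not one of the $2g$ boundary circles of $\gm_R\Og_0$, then $\gm_R\Og_0$ lies strictly on one side of $\ld'$, so $Conv(\gm_R\Og_0)$ lies in the corresponding open half-space bounded by $Conv(\ld')$ and $\et(\hat R)\cap D'_{\ld'}=\emp$. If $\ld'$ is a boundary circle of $\gm_R\Og_0$, then $Conv(\gm_R\Og_0)\cap D'_{\ld'}$ is exactly that whole face, which $\et(\hat R)$ touches precisely along the disk $D_{\ell_R}$, where $\ell_R$ is the unique boundary loop of $R$ with $f(\ell_R)=\ld'$ (here one uses that $\et(int(\hat R))$ stays in the open cell and that distinct faces of $Conv(\gm_R\Og_0)$ are disjoint). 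Assembling over all $R\subset F$, and observing that every such loop $\ell$ of $\Lt$ lies in $int(F)$ (since $f(\pt F)\subset\pt F'$ while $\ld'\in int(F')$), I conclude that $\e_{\Fh}\iv(D'_{\ld'})$ is the disjoint union of the disks $D_\ell$ of $\Dlt$ bounded by the loops $\ell$ of $\Lt\cap F$ with $f(\ell)=\ld'$; these are disks properly embedded in $H_{\Fh}$, and one of them is bounded by $\ld$.

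It then remains to check that $\pt\e_{\Fh}\iv(D'_{\ld'})=\{\ell\in\Lt\cap F:\,f(\ell)=\ld'\}$ coincides with $\lf f_{\Fc}\iv(\ld')\rf$. Since $\ld'\subset int(F')$ is disjoint from every complementary disk $Y_\ell$ of $F'$, and since $f_{\Fc}$ restricts to $f$ on $F$ and, on each attached punctured disk, to the branched covering onto $Y_\ell\sm\{\e_{\Fh}(p_\ell)\}$ described in \S\ref{AlmostGood}, we have $f_{\Fc}\iv(\ld')=f\iv(\ld')\cap F$. By Proposition \ref{N}(ii), used as in the proof of Lemma \ref{pi(Q)}, each loop of $f\iv(\ld')$ is either a loop of $\Lt$ with $f$-image $\ld'$ or inessential on $\St$. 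A component of $f\iv(\ld')$ lying in $F$ and inessential on $\St$ bounds a disk in $\St$ that is disjoint from $\pt F$ (whose components are essential), hence lies in $F$, so it is inessential in $\Fc$; a loop of $\Lt$ lying in $int(F)$ is not $\pt$-parallel in $F$ — no component of $\St\sm\Lt$ is an annulus, as each carries a good holed sphere with $2g\geq 4$ holes — and is therefore essential in $\Fc$. Thus the essential part of $f_{\Fc}\iv(\ld')$ is exactly $\{\ell\in\Lt\cap F:\,f(\ell)=\ld'\}=\pt\e_{\Fh}\iv(D'_{\ld'})$, which finishes the proof.

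The step I expect to require the most care is the middle one: making the phrase ``$\et$ embeds the $3$-cell $\hat R$ into $Conv(\gm_R\Og_0)$ compatibly with faces'' precise enough to force $\et(\hat R)$ to meet the face $Conv(\ld')$ in exactly the dual boundary disk $D_{\ell_R}$ and in nothing more. This rests entirely on the properness assertions of Proposition \ref{e} and Corollary \ref{eCorollary}, which keep $\et(int(R))$, and hence $\et(int(\hat R))$, inside the interior of the $3$-cell, together with the disjointness of distinct faces of $Conv(\gm_R\Og_0)$. Once that is granted, everything else is bookkeeping with the tree of fundamental domains of $\Gm$ acting on $\hb$ and the elementary surface topology of the inclusion $F\subset\Fc$.
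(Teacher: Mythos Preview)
Your approach is essentially the paper's: discard the attached pieces $H_\ell$ using Lemma \ref{R_i}(iii) and $\ld'\subset int(F')$, then on $H_F$ invoke Proposition \ref{e}(i),(iii) to identify $\et\iv(D'_{\ld'})$ with exactly those disks of $\Dlt$ bounded by loops $\ell\in\Lt\cap F$ with $f(\ell)=\ld'$. The paper compresses your cell-by-cell analysis into a one-line appeal to Proposition \ref{e}(i) and leaves the identification $\lf f_F\iv(\ld')\rf=\{\ell\in\Lt\cap F: f(\ell)=\ld'\}$ implicit, but the content is the same.

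Two small corrections are needed. First, in your second paragraph ``the unique boundary loop $\ell_R$ of $R$ with $f(\ell_R)=\ld'$'' need not be unique: $\Ct|R$ is only \emph{almost} good, so several boundary components of $R$ may cover the same loop of $\Lt'$. Replace ``the disk $D_{\ell_R}$'' by ``the union of the disks $D_\ell$ over all such $\ell$'' and the argument goes through unchanged. Second, your justification that a loop $\ell$ of $\Lt$ in $int(F)$ is essential in $\Fc$ is off: components of $\St\sm\Lt$ are almost good, not good, and need not have $2g$ boundary circles---nothing you have proved rules out an annulus. The correct (and simpler) reason is that $\ell\in\Lt$ is essential in $\St$, hence bounds no disk in $F\subset\St$; but a loop in $int(F)$ is inessential in $\Fc$ precisely when one side of it in $F$ meets no component of $\pt F$, i.e.\ when it bounds a disk in $F$. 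So $\ell$ is essential in $\Fc$, and no $\pt$-parallel argument is needed.
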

\begin{proof}
Recall that $H_F$ and all $H_\ell$ have disjoint interiors, where $\ell$ varies over all boundary components of $F$ and that $H_\Fh$ is the union of $H_F$ and all $H_\ell$.
By Lemma \ref{F_0'}, $~\ld'$ is contained in $int(F'_0)$ and thus in $int(F')$.
Then $D'_{\ld'}$ is contained in $int(Conv(F'))$.
If $\ell$ is a boundary component of $F$, then $X_\ell$ is a component of $\hb \sm Conv(F')$ and its closure contains $\e_{\Fh}(H_\ell)$, therefore $\e_{\Fh}\iv(D'_{\ld'}) \cap H_\ell = \emptyset$.
Hence $\e_{\Fh}\iv(D'_{\ld'}) \st H_F$.

For each boundary component $\ell$ of $F$, the once-punctured disk $R_\ell$ (bounded by $\ell$) covers the once-punctured disk $Y_\ell \sm \e_\Fh(p(\ell))$ in $\rs$ via $f_\Fc$.
Since $Y_\ell \cap int (F') = \emptyset$, thus $R_\ell \cap  f_\Fc\iv(\ld') = \emptyset$.
Therefore $\lf f_\Fc\iv(\ld') \rf \st F$.

Therefore it suffices to show that $\e_F\iv(D'_{\ld'})$ is a multidisk properly embedded in $H_F$ bounded by $\lf f_F\iv(\ld') \rf$.
By Proposition \ref{e} (i),  $~\e_F\iv(D'_{\ld'})$ is a union of finitely many (disjoint) disks of $\Dlt \cap H_F$.
By Proposition \ref{e} (iii), a disk $D$ of $\Dlt \cap H_F$ embeds into $D'_\ldp$ if and only if  $f_\Fc(\pt D) = f(\pt D) = \ld'$.
This completes the proof.
\end{proof}

For a surface $\Sigma$, we let $P_\Sigma$ denote the set of all punctures of $\Sigma$.
Let $L_{\ld'} = \lf f_\Fc\iv(\ld') \rf$, which is a multiloop on $\Fc$.

\begin{lemma}\label{almostgood}
Let $X$ be a component of $\Fc \sm L_{\ld'}$.
Then $C_\Fc|X$ is an almost good genus-zero surface fully supported on a 2-disk with finitely many punctures $f_\Fc(P_X)$, where the 2-disk is the component of $\rs \sm \ld'$ containing $f_\Fc(P_X)$.
\end{lemma}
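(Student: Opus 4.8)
The plan is to verify directly that $C_\Fc(X) = (f_\Fc|_X, \rho_\id)$ satisfies conditions (i)--(iv) (and in fact we only need "almost good" here, not "good") in the definition of an almost good genus-zero surface, using the product structure $\eta_\Fc$ and the correspondence with $\e_\Fh$ already established. First I would recall that $\Fc$ is a punctured sphere, so $X$, being a component of $\Fc$ cut along the multiloop $L_{\ld'}$, is a genus-zero surface of finite type; its holonomy is the restriction of $\rho_\id$, hence trivial, giving (i). For (ii): by Lemma \ref{phi=id}, $L_{\ld'} = \lf f_\Fc\iv(\ld')\rf$ corresponds under $\e_\Fh$ to $\e_\Fh\iv(D'_{\ld'}) \cap \Fc$, and $f_\Fc$ has the path-lifting property away from $\ld'$ (since $\ld'$ is a loop of $\Lt'$ and $C_\Fc$ is good with full support $\rs \sm \sqcup_i \e_\Fh(p_i)$, so its developing map has the lifting property off the boundary loops and punctures of that support — but here we are restricting to $X$, whose developing image avoids $\ld'$ in its interior). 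This lets $f_\Fc|_X$ extend continuously to the punctures of $X$, with the punctures coming from $P_\Fc$ being ramification points of the same index as in $C_\Fc$, and the punctures created by pinching components of $L_{\ld'}$ being non-ramified (they lie over the loop $\ld'$, not over a branch point).

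Next, for (iii) and (iv), I would identify the support. The loop $\ld'$ cuts $\rs$ into two open disks; let $D$ be the one containing $f_\Fc(X)$ in its closure — more precisely, I claim $f_\Fc(X)$ is contained in the closure of exactly one component of $\rs \sm \ld'$, because $X$ is a component of $\Fc$ cut along $L_{\ld'}$ and $f_\Fc$ maps each component of $L_{\ld'}$ onto $\ld'$, so $X$ maps into one side. Then the support should be taken to be this disk $D$ with the punctures $f_\Fc(P_\Fc) \cap D$ removed. The boundary components of $X$ all map to $\ld'$, which is the single boundary circle of this punctured disk, and each such boundary component covers $\ld'$ with some degree (possibly $1$). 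Condition (iv) — that $f_\Fc(P_X) \cup f_\Fc(\pt X)$ is not connected — follows because $\pt X$ maps to the single circle $\ld'$, which is at positive distance from the punctures inside $D$, as long as $X$ actually has at least one puncture or the configuration forces disconnectedness; since $\Fc$ is a punctured sphere and $L_{\ld'}$ is essential (each loop of $L_{\ld'}$ is a lift of the meridian-type loop $\ld$... actually essentiality needs checking), $X$ cannot be an annulus with both boundary circles over $\ld'$ and no puncture — I would argue that such an $X$ would force a loop of $L_{\ld'}$ bounding a disk in $\Fc$ mapping homeomorphically to a disk bounded by $\ld'$, contradicting that $\ld'$ is a nontrivial loop of $\Lt'$ (it is the total lift of a meridian, hence essential in $\Og$, so $f_\Fc\iv(\ld')$ has no inessential component lying over it in a way that bounds — compare Lemma \ref{support} and Lemma \ref{multiloop}).

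The main obstacle I expect is precisely the last point: ruling out that a component $X$ of $\Fc \sm L_{\ld'}$ is a "trivial" piece — an unpunctured disk or an annulus over $\ld'$ with no puncture — which would violate (iv) (and the non-positivity of Euler characteristic). For the disk case, a component of $L_{\ld'}$ bounding a disk in $\Fc$ would, via $f_\Fc$, bound a disk mapping to one side of $\ld'$; but $\ld'$ is a loop of $\Lt'$ and hence essential in $\Og$, and the lifting/covering behavior of $f_\Fc$ (established through $\e_\Fh$ and Proposition \ref{e}) should show every component of $f_\Fc\iv(\ld')$ is essential in $\Fc$ — I would invoke the analog of the argument in Lemma \ref{pi(Q)}/Proposition \ref{N}(ii), namely that an inessential such loop would have to cover $\ld'$ with even total intersection against any arc, forcing a contradiction with the equivariance/covering structure. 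Once no component of $L_{\ld'}$ is inessential, $X$ cannot be a disk; and an annulus component with no puncture would make $C_\Fc(X)$ a flat annulus over an annular neighborhood of $\ld'$, which is excluded because distinct boundary components of $X$ map to $\ld'$ in $\rs \sm \ld'$'s two genuinely different sides only if they lie on the same side — the point is that the developing map being an immersion with the stated ramification data forces at least one puncture of $\Fc$ (an original branch point, or one of the pinched ones on the "far" side) into $X$. Modulo carefully setting up this essentiality claim — for which I would lean on Lemmas \ref{multiloop}, \ref{support}, and Proposition \ref{e}(iii) — the rest is a direct check against the definition in \S\ref{AlmostGood}.
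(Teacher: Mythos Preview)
Your proposal has a genuine gap at the step where you claim ``$f_\Fc(X)$ is contained in the closure of exactly one component of $\rs \sm \ld'$, because $X$ is a component of $\Fc$ cut along $L_{\ld'}$ and $f_\Fc$ maps each component of $L_{\ld'}$ onto $\ld'$, so $X$ maps into one side.''  This inference is not valid: $L_{\ld'} = \lf f_\Fc^{-1}(\ld')\rf$ is only the \emph{essential} part of the preimage, so the interior of $X$ may (and typically does) contain inessential loops of $f_\Fc^{-1}(\ld')$.  Across such a loop the immersion $f_\Fc$ passes from one side of $\ld'$ to the other, so $f_\Fc(X)$ need not lie in a single component of $\rs \sm \ld'$.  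Consequently your argument does not establish the crucial fact that all punctures $P_X$ land in the \emph{same} component of $\rs\sm\ld'$, which is exactly what is needed to identify the support as a single punctured disk.

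The paper bypasses this difficulty by working not with the immersion $f_\Fc$ but with the \emph{embedding} $\e_\Fh\colon H_\Fh \to \hb$.  By Lemma \ref{phi=id}, $L_{\ld'}$ is precisely $\pt\bigl[\e_\Fh^{-1}(D'_{\ld'})\bigr]$; since $\e_\Fh$ is injective, the component $X$ is carried by $\e_\Fh$ into a single component $H$ of $\hb \sm D'_{\ld'}$.  Because $f_\Fc(p) = \e_\Fh(p)$ for each puncture $p$, every puncture of $X$ lands in the disk $H\cap\rs$, and distinct punctures go to distinct points.  This three--dimensional argument is the key idea you are missing; nothing in your direct analysis of $f_\Fc$ replaces it.  Your lengthy discussion of condition (iv) is a separate (and comparatively minor) issue, but even there the cleanest route is again through $\e_\Fh$: once the punctures of $X$ are known to lie in $int(H\cap\rs)$, disconnectedness of $f_\Fc(P_X)\cup\ld'$ is immediate whenever $P_X\neq\emptyset$.
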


\begin{proof}
Since, by Lemma \ref{phi=id}, $L_{\ld'}$ is the boundary of the multidisk $\e_\Fh\iv(D'_\ldp)$ in $H_
\Fh$, $~\e_\Fh\cn H_\Fh \to \hb$ embeds $X$ into a single component $H$ of $\hb \sm D'_\ldp$.
Then $H \cap \rs$ is a component of $\rs \sm \ld'$, which is a round 2-disk.  
If  $p \in P_\Fc$ (in particular if $p \in P_X$), then  $f_\Fc(p) = \e_\Fh(p)$. 
Since $\e_\Fh$ is an embedding, different points of $P_X$ map to different points  in the interior of $H \cap \rs$.
In addition, all boundary components of $X$ cover $\ld'$ via $f_\Fc$. 
Therefore $C_\Fc|X$ is an almost good genus-zero surface fully supported on the punctured disk $(H \cap \rs) \sm f_\Fc(P_X)$.
\end{proof}

{\it Step 3.}
We have constructed a good projective structure $C_{\Fc} = (f_\Fc, \rho_{id})$ on a puncture sphere $\Fc$; 
an embedding  $\e_{\Fh}\cn  H_\Fh \to \hb$, where the sphere $\Fh$ that is the union of $\Fc$ with its punctures and $H_\Fh$ is  a 3-disk bounded by $\Fh$; 
an embedding $\eta_{\Fc}\cn \Fc \times [0,1] \to \hb$ so that $\eta_\Fc| (\Fc \times \{0\})$ is the restriction of $\e_\Fh$ to $\Fc \st \pt H_\Fh$ and $\eta| (\Fc \times \{1\})$ is a homeomorphism onto the support of $C_{\Fc}$. 
 
Let  $\phi\colon \ol{\h^3} \to \ol{\h^3}$ be a homeomorphism.
Then we can transform $C_{\Fc}, \e_{\Fh}, \eta_{\Fc}$ by postcomposing with $\phi$, and, since $\phi$ is a homeomorphism,  this transformation preserves essential topological properties  of $C_{\Fc}, \e_{\Fh}, \eta_{\Fc}$ and correspondences between them:
Namely, we let 
\begin{eqnarray*}
f_\phi &=& \phi \cc f_{\Fc}\colon \Fc \to \rs\\
C_\phi &=& (f_\phi, \rho_\id) \\
  \e_\phi &=& \phi \cc \e_{\Fh}\colon H_{\Fh} \to \hb \\
 \eta_\phi &=& \phi \cc \eta_{\Fc}\colon \Fc \times [0,1] \to \hb. 
\end{eqnarray*}

Then
\begin{proposition}\label{phi}
There exists a homeomorphism $\phi\colon \ol{\h^3} \to \ol{\h^3}$ such that 
\begin{itemize}
\item[(i)] $\lf f_\phi \iv(\ld') \rf$ is a single loop isotopic to $\ld$ on $\Fc$, and 
\item[(ii)]  $\ld'$ is isotopic to $\e_\phi (\ld)$ in the product region ${\rm Im} (\eta_\phi)$.
\end{itemize}
\end{proposition}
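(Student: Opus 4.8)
\emph{Overview.} The plan is to reduce Proposition \ref{phi} to a purely topological statement about the closed ball $\hb$ — that the embedded ball $Im(\e_\Fh)$ can be isotoped so as to meet the plane $D'_{\ld'}$ in a single disk — and then to prove that statement by an induction which deletes one ``superfluous'' intersection disk at a time.

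\emph{Reduction.} Write $B = Im(\e_\Fh)\st\hb$; this is a closed $3$-ball, $\e_\Fh$ is an embedding of $\hfh$ onto it, and $B\cap\rs$ is exactly the set $q := \{\,\e_\Fh(p_1),\dt,\e_\Fh(p_n)\,\}$ of marked points. By Lemma \ref{phi=id}, $\e_\Fh\iv(D'_{\ld'})$ is a multidisk $E_1\sq\dt\sq E_k$ properly embedded in $\hfh$ with $\pt(E_1\sq\dt\sq E_k) = L_{\ld'} = \lf f_\Fc\iv(\ld')\rf$, and by the construction of $\e_\Fh$ in Step $2$ one of the disks $D_i := \e_\Fh(E_i)$ is bounded by $\e_\Fh(\ld)$; say $D_1$ is that one, so $\ld = \pt E_1$. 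Since $\ld'\st int(F')$ while $q\st\rs\sm F'$, the geodesic plane $D'_{\ld'}$ is disjoint from $q$, so $D_1,\dt,D_k$ are disjoint subdisks of $D'_{\ld'}$ lying in $\h^3$ and $B\cap D'_{\ld'} = D_1\sq\dt\sq D_k$. I claim it suffices to produce a homeomorphism $\phi\cn\hb\to\hb$ with $\phi(B)\cap D'_{\ld'} = \phi(D_1)$. Indeed, set $P = \phi\iv(D'_{\ld'})$, a properly embedded disk with $\pt P = \phi\iv(\ld')$; the displayed equation reads $B\cap P = D_1$, so $\pt P\cap q = D_1\cap\rs = \emp$ and $\e_\Fh\iv(P) = E_1$, whence $\e_\phi\iv(D'_{\ld'}) = \e_\Fh\iv(P) = E_1$ is a single disk. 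Via the correspondence between $f_\Fc$ and $\e_\Fh$ set up in Steps $1$--$2$ (Lemmas \ref{phi=id} and \ref{almostgood} together with the product structure of $\hb\sm int(B)$), which is topological and hence survives post-composition of $f_\Fc,\e_\Fh,\eta_\Fc$ with the homeomorphism $\phi$, this gives $\lf f_\phi\iv(\ld')\rf = \pt\,\e_\phi\iv(D'_{\ld'}) = \pt E_1 = \ld$, a single loop: this is (i). For (ii), since $Im(\eta_\phi) = \hb\sm\big(int(\phi(B))\cup\phi(q)\big)$ and $\phi(q)\cap\ld' = \emp$, we get $D'_{\ld'}\cap Im(\eta_\phi) = D'_{\ld'}\sm int(\phi(D_1))$, an annulus whose boundary components are $\ld'$ and $\pt\phi(D_1) = \e_\phi(\ld)$, so $\ld'$ is isotopic to $\e_\phi(\ld)$ inside it. (Pulling (ii) back by $\phi$ and using the product structure of $Im(\eta_\Fc)$ then shows $\pt P = \phi\iv(\ld')$ is isotopic to $\mu'$ in $\rs\sm q$, which is what the proof of Proposition \ref{single} requires.)

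\emph{Construction of $\phi$.} We induct on $k$. If $k = 1$, take $\phi = \id$. Let $k\ge2$. The disks $E_1,\dt,E_k$ cut the ball $\hfh$ into $k+1$ balls (the closures of the components of $\hfh\sm\sq_i E_i$) whose dual graph is a tree with edges $E_1,\dt,E_k$; equivalently, $\e_\Fh$ being an embedding, the disks $D_1,\dt,D_k$ cut $B$ into $k+1$ balls with dual tree. Each component of $B\sm D'_{\ld'}$ lies in one of the two components of $\hb\sm D'_{\ld'}$, so all of its contacts with $\rs$ lie in a single component of $\rs\sm\ld'$. A tree with $k\ge2$ edges has at least two distinct leaf edges, so we may pick one, $E_j$, with $j\ne1$; let $A$ be the corresponding leaf ball, so that $A\cap D'_{\ld'} = D_j$, all the contacts of $A$ with $\rs$ lie on one side of $D'_{\ld'}$, and $A$ is disjoint from $D_i$ ($i\ne j$) and from every other piece of $B\sm D'_{\ld'}$ except along $D_j$. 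Hence there is an ambient isotopy of $\hb$, supported in a small neighbourhood of $A$ (in particular disjoint from $D_1$), that pushes $A$ monotonically across $D'_{\ld'}$ — carrying its marked points transversally across $\ld'$ — so that afterwards $A$ and its former neighbour lie on the same side of $D'_{\ld'}$. Its time-$1$ map $\phi_1$ fixes $D_1$ pointwise, and $\phi_1(B)$ is again a ball meeting $\rs$ in finitely many points off $\ld'$, meeting $D'_{\ld'}$ now in $k-1$ disks (exactly $D_j$ having disappeared) with dual tree (the leaf vertex having merged into its neighbour), and carrying the distinguished disk $D_1 = \phi_1(D_1)$, bounded by $\phi_1(\e_\Fh(\ld)) = \e_\Fh(\ld)$. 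By the inductive hypothesis applied to $\phi_1(B)$ there is a homeomorphism $\phi_2$ of $\hb$ with $\phi_2(\phi_1(B))\cap D'_{\ld'} = \phi_2(\phi_1(D_1))$, and $\phi := \phi_2\cc\phi_1$ is as required.

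\emph{Main obstacle.} The delicate point is the ambient isotopy in the inductive step. It goes through precisely because a leaf ball $A$ is a ball all of whose contacts with $\rs$ lie on one side of the wall $D'_{\ld'}$, so it can be swept bodily across the wall with its marked points fingering across $\ld'$, and, being a leaf, it drags no further intersection disk along; performing the sweeps innermost-leaf-first keeps each one local, and since $j\ne1$ at every stage $D_1$ is never disturbed. The other point needing care, used in the Reduction, is the passage ``$\phi(B)$ meets $D'_{\ld'}$ in one disk'' $\Rightarrow$ ``$\lf f_\phi\iv(\ld')\rf$ is one loop''; making this precise amounts to tracking $\lf f\iv(\ld')\rf$ along the isotopy $\phi$, or equivalently to re-running the arguments of Lemmas \ref{phi=id} and \ref{almostgood} with the disk $\phi\iv(D'_{\ld'})$ in place of the geodesic plane $D'_{\ld'}$, which those arguments, being purely topological, permit.
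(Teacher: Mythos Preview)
Your overall architecture matches the paper's: reduce to removing outermost intersection disks one at a time by an ambient isotopy of $\hb$ that pushes a ``leaf'' piece across $D'_{\ld'}$. Your derivation of (ii) from the single-disk condition via the annulus $D'_{\ld'}\sm int(\phi(D_1))$ is exactly right.

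The gap is in your Reduction, at the implication ``$\phi(B)\cap D'_{\ld'}$ is one disk $\Rightarrow \lf f_\phi\iv(\ld')\rf$ is one loop,'' i.e.\ at the equality $\pt\,\e_\phi\iv(D'_{\ld'}) = \lf f_\phi\iv(\ld')\rf$. You assert that Lemmas \ref{phi=id} and \ref{almostgood} can simply be re-run with $\phi\iv(D'_{\ld'})$ in place of $D'_{\ld'}$ because ``those arguments, being purely topological, permit.'' They do not. The proof of Lemma \ref{phi=id} rests on Proposition \ref{e} (i) and (iii): that $\e_F\iv(D'_{\ld'})$ is a union of disks of $\Dlt\cap H_F$, and that such a disk $D$ lands in $D'_{\ld'}$ exactly when $f(\pt D)=\ld'$. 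These facts are specific to $D'_{\ld'}=Conv(\ld')$ with $\ld'\in\Lt'$; they say nothing about an arbitrary properly embedded disk $\phi\iv(D'_{\ld'})$. The only link between $\e_\Fh$ and $f_\Fc$ that survives post-composition by an arbitrary $\phi$ is $\e_\Fh(p_i)=f_\Fc(p_i)$ on the punctures; that alone does not pin down the isotopy class of $\lf f_\phi\iv(\ld')\rf$ (different ways of pushing $P_Q$ across $\ld'$ can braid and yield non-isotopic essential multiloops).

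This is precisely why the paper carries three hypotheses (I), (II), (III) through the induction rather than just your disk count. Condition (III)---that each component of $\Fc\sm L_{\phi_1}$ is almost good, supported on a punctured half of $\rs\sm\ld'$---is what lets one compute how $\lf f_{\phi_1}\iv(\ld')\rf$ changes when the punctures of the outermost piece $Q$ are slid across $\ld'$: lifting the boundary isotopy through the branched cover $f_{\phi_1}$ (which (III) makes tractable piecewise) shows exactly one essential loop disappears. Condition (II) is then re-established by matching the $3$-dimensional push to the $2$-dimensional one. Your inductive step produces the correct $3$-dimensional picture, but you have suppressed the verification on the $f$-side; to repair the argument you must either track (II) and (III) along your isotopy as the paper does, or give an independent proof that a single intersection disk forces $\lf f_\Fc\iv(\phi\iv(\ld'))\rf$ to be a single loop---and no such general statement is available.
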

(An example of such a homeomorphism $\phi$ is illustrated in Figure \ref{ExamplePhi}.)
First we show that it suffices to construct $\phi$ satisfying (i) and, instead of $(ii)$,
\begin{itemize}
\item[(II)] $\e_{\phi}\iv(D'_\ldp)$ is a multidisk properly  embedded in $H_\Fh \sm P_\Fc$ and bounded by $\lf f_\phi\iv(\ld') \rf$.
\end{itemize}
Assume that there is a homeomorphism $\phi: \hb \to \hb$ satisfying (i) and (II).
Then, by (i), $~\lf f_\phi \iv(\ld') \rf$ is a loop on $\Fc$ isotopic to $\ld$.
Thus the loop $\e_\phi (\ld)$ is isotopic to the loop $\e_\phi (\lf f_\phi \iv(\ld') \rf)$ on the surface $\e_\phi (\Fc)$.
By (II), $~D'_\ldp \cap {\rm Im} (\e_\phi)$ is a single disk bounded by $\e_\phi (\lf f_\phi \iv(\ld') \rf)$.
Therefore $\e_\phi (\lf f_\phi \iv(\ld') \rf)$ and $\ld'$  bound an annulus properly embedded in ${\rm Im} (\eta_\phi)$\,;
 in particular, they are isotopic in ${\rm Im}( \eta_\phi)$.
 Thus (ii) holds.

\begin{figure}[htbp]
\begin{center}
\includegraphics[width=5in]{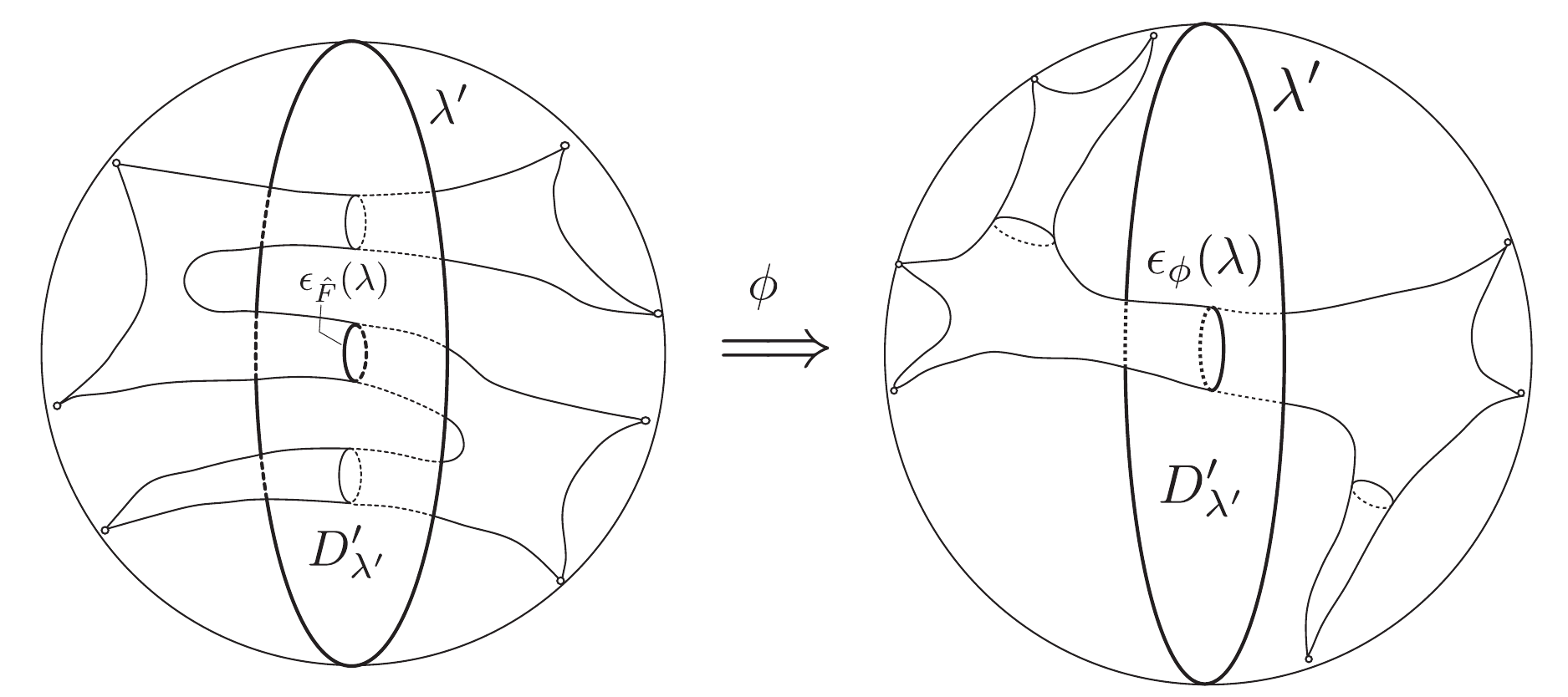}
\caption{An example of $\phi$ in Proposition \ref{phi}.}
\label{ExamplePhi}
\end{center}
\end{figure}

We will further reduce the proof of Proposition \ref{phi} to an induction.
For this, suppose that there is a homeomorphism ${\phi_1}\colon \ol{\h^3} \to \ol{\h^3}$ satisfying:\\
(I)~  $\lf f_{\phi_1}\iv(\ld') \rf =: L_{\phi_1}$ is a multiloop on ${\Fc}$ containing a loop $\ld_{\phi_1}$ isotopic to $\ld$, \\
(II) ~$\e_{\phi_1}\iv(D'_\ldp) =: \Dl_{\phi_1}$ is a multidisk properly  embedded in $H_\Fh \sm P_\Fc$ and bounded by $L_{\phi_1}$, and \\
(III)~ if $X$ is a component of $\Fc \sm L_{\phi_1}$, then $C_{\phi_1}|X$ is an almost good surface fully supported on the punctured disk $B_X \sm f_{\phi_1}(P_X)$,  where $B_X$ is the component of $\rs \sm \ld'$ containing $ f_{\phi_1}(P_X)$.
 
In particular, if $\phi_1 = id$, then, (I) holds since $f(\ld) = \ld'$,  (II)  by Lemma \ref{phi=id}, and (III) by Lemma \ref{almostgood}.

A loop $\ell $ of a multiloop $N$ on a surface $\Sigma$ is {\it outermost} if $\ell $ is a separating loop and a component of $\Sigma \sm \ell$ contains no loops of $N$; this component is called an {\it outermost component}. 
Every loop of $L_{\phi_1}$ on $\Fc$ is separating since $\Fc$ is a planar surface.

\begin{lemma}\label{InductionLemma}\label{induction} 
Let $\ell $ be an outermost loop of $L_{\phi_1}$ on $\Fc$.
Then there is a homeomorphism $\phi_2\colon \overline{\mathbb{H}^3} \to \overline{\mathbb{H}^3}$ satisfying the following properties:\\
(I')  $\lf f_{\phi_2}\iv(\ld') \rf  =: L_{\phi_2}$ is isotopic to $L_{\phi_1} \sm \ell$ on $\Fc$,\\ 
(II') $\e_{\phi_2}\iv(D'_\ldp) =: \Dl_{\phi_2}$ is a multidisk properly  embedded in $H_\Fh \sm P_\Fc$ and  bounded by $L_{\phi_2}$, and\\
(III') if $X$ is a component  of $\Fc \sm L_{\phi_2}$, then $C_{\phi_2}|X$ is an almost good surface fully supported on the punctured disk $B_X \sm f_{\phi_2}(P_X)$,  where $B_X$ is the component of $\rs \sm \ld'$ containing $f_{\phi_2}(P_X)$. \\
\end{lemma}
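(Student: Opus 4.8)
The plan is to produce $\phi_2$ from $\phi_1$ by a single ``finger move'' across $D'_{\ld'}$: a homeomorphism $\psi\colon\hb\to\hb$, isotopic to the identity, that drags the part of $Im(\e_{\phi_1})$ poking through $D'_{\ld'}$ along the outermost disk back across $D'_{\ld'}$, after which one sets $\phi_2=\psi\cc\phi_1$, so that $f_{\phi_2}=\psi\cc f_{\phi_1}$, $~\e_{\phi_2}=\psi\cc\e_{\phi_1}$ and $\eta_{\phi_2}=\psi\cc\eta_{\phi_1}$ all transform coherently.

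First I would pin down the outermost finger. Let $X_0$ be the outermost component of $\Fc\sm\ell$; it contains no loop of $L_{\phi_1}$, hence is a single component of $\Fc\sm L_{\phi_1}$ with $\pt X_0=\ell$, and by (III) the structure $C_{\phi_1}|X_0$ is almost good, fully supported on a punctured disk $B_0\sm f_{\phi_1}(P_{X_0})$ with $B_0$ a component of $\rs\sm\ld'$. Let $d_\ell\st\Dl_{\phi_1}$ be the disk with $\pt d_\ell=\ell$; since $X_0$ carries no other loop of $L_{\phi_1}$, $~d_\ell$ cuts a $3$-ball $H_0$ off $H_\Fh$ with $H_0\cap\Fc=X_0$, $~H_0\cap P_\Fc=P_{X_0}$, $~H_0\cap\Dl_{\phi_1}=d_\ell$. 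Set
$$W=\e_{\phi_1}(H_0)\,\cup\,\eta_{\phi_1}(\overline{X_0}\times[0,1])\,\cup\,\e_{\phi_1}(P_{X_0}),$$
the closure of the union of $\e_{\phi_1}(H_0)$ with the product column over $X_0$. Using (II), (III) and Corollary \ref{eCorollary}, $~W$ is a $3$-ball that meets $\rs$ in a subdisk of $B_0$ containing $f_{\phi_1}(P_{X_0})$, meets $D'_{\ld'}$ in exactly the subdisk $\e_{\phi_1}(d_\ell)$ of its boundary sphere, meets no other part of $Im(\e_{\phi_1})$ or of $D'_{\ld'}$, and has $W\sm D'_{\ld'}$ inside the component of $\hb\sm D'_{\ld'}$ whose trace on $\rs$ is $B_0$.

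Next I would construct $\psi$. Because $W$ is a $3$-ball one face of whose boundary sphere lies on $D'_{\ld'}$ and another on $\rs$, and whose interior avoids everything else relevant, there is an ambient isotopy of $\hb$, supported in a thin neighbourhood of $W$ together with a tube joining it across $D'_{\ld'}$, that slides $W$ through $D'_{\ld'}$; at the end $\psi(W)$ is disjoint from $D'_{\ld'}$ and lies on the far side, with $\psi(W)\cap\rs$ a subdisk of the other component $B_1$ of $\rs\sm\ld'$, and the punctures $f_{\phi_1}(P_{X_0})$ are dragged across $\ld'$. I would arrange the move to fix all punctures of $\Fc$ outside $P_{X_0}$, to equal the identity on $Im(\e_{\phi_1})\sm W$ outside an arbitrarily small collar of $\e_{\phi_1}(d_\ell)$, to create no new intersection of $Im(\e_{\phi_1})$ with $D'_{\ld'}$, to carry the product structure so that $\eta_{\phi_2}$ still exhibits $\hb\sm(int(Im(\e_{\phi_2}))\cup\e_{\phi_2}(P_\Fc))$ as $\Fc\times[0,1]$, and --- crucially --- so that the developing image of every component of $\Fc\sm L_{\phi_2}$ still lies in a single component of $\rs\sm\ld'$. \emph{This is the step that carries the real weight}: the hypothesis that $\ell$ is \emph{outermost} is exactly what makes $W$ an unknotted $3$-ball touching $D'_{\ld'}$ in a single face of $\pt W$, so that the move can be realized by an honest ambient isotopy rather than one that must be routed ``around'' the rest of $Im(\e_{\phi_1})$; the delicate part is to perform it so that it deletes precisely the single disk $\e_{\phi_1}(d_\ell)$ while preserving the compatibility (II) between $\Dl_{\phi}$ and $L_{\phi}$ and the support condition (III).

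Finally I would verify the three conclusions. (II'): $\Dl_{\phi_2}=\e_{\phi_2}\iv(D'_{\ld'})=\e_{\phi_1}\iv(\psi\iv(D'_{\ld'}))$, and by construction $\psi\iv(D'_{\ld'})$ misses the interior of $W\supset\e_{\phi_1}(H_0)$ while agreeing with $D'_{\ld'}$ away from the finger, so $\Dl_{\phi_2}=\Dl_{\phi_1}\sm d_\ell$, still a multidisk properly embedded in $H_\Fh\sm P_\Fc$; its boundary is $L_{\phi_1}\sm\ell$ up to isotopy. (I'): away from the finger nothing changed, and near the finger $f_{\phi_2}\iv(\ld')$ acquires no essential loop --- the strip $X_0\cup\ell$ is absorbed into a single component of $\Fc\sm L_{\phi_2}$ --- so $L_{\phi_2}\simeq L_{\phi_1}\sm\ell$. (III'): every component of $\Fc\sm L_{\phi_2}$ other than the absorbed one is untouched by the localized move and inherits (III); for the absorbed component $Y$ (the one containing $X_0$) one checks, using the last requirement imposed on $\psi$, that $C_{\phi_2}|Y$ is an almost good genus-zero surface supported on $B_Y\sm f_{\phi_2}(P_Y)$, where $B_Y$ is the component of $\rs\sm\ld'$ carrying $f_{\phi_2}(P_Y)$: $Y$ is planar, its punctures map to distinct interior points of $B_Y$, its boundary loops cover $\ld'=\pt B_Y$, and the images of its punctures together with its boundary form a disconnected set since $Y$ has at least one puncture. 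This closes the induction.
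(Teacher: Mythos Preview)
Your overall strategy --- set $\phi_2=\psi\cc\phi_1$ for a homeomorphism $\psi$ of $\hb$ that pushes the outermost piece across $D'_{\ld'}$ --- is exactly the paper's strategy. But your execution has two genuine gaps.

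First, the claim that $W$ meets $D'_{\ld'}$ only in $\e_{\phi_1}(d_\ell)$ is unjustified. Hypothesis (II) controls $\e_{\phi_1}(H_\Fh)\cap D'_{\ld'}$, but nothing controls where the product column $\eta_{\phi_1}(\overline{X_0}\times[0,1])$ sits relative to $D'_{\ld'}$; Corollary~\ref{eCorollary} concerns $\et$ on the handlebody, not the product $\eta$, and in any case ceases to apply after composing with a general $\phi_1$. In particular $\eta_{\phi_1}(\overline{X_0}\times\{1\})$ is an arbitrary disk in $\rs\sm\e_{\phi_1}(P_\Fc)$ containing $\e_{\phi_1}(P_{X_0})$, with no reason to lie in $B_0$, so $W$ may cross $D'_{\ld'}$ many times and the ``single unknotted finger'' picture is not available.

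Second, and more importantly, your verification of (I$'$) is an assertion, not an argument. Saying ``near the finger $f_{\phi_2}\iv(\ld')$ acquires no essential loop'' is precisely the content of the lemma; one must actually analyze $\lf(\psi\cc f_{\phi_1})\iv(\ld')\rf$. The paper does this by working on $\rs$ first: it introduces an auxiliary round circle $\ld''$ just inside $B(\ld',-)$, bounding an annulus $A'$; chooses disjoint arcs $a_i$ in $B(\ld'',+)\sm P_\Fc$ from each puncture $q_i\in P_{X_0}$ to a point of $A'$, each crossing $\ld'$ once; and takes $\psi|_{\rs}$ to be the composition of point-pushes supported on small disks $U_i\supset a_i$. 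The point of the buffer $\ld''$ is that the components of $\Fc\sm\lf f_{\phi_1}\iv(\ld'')\rf$ agree with those of $\Fc\sm L_{\phi_1}$ up to thin collars, and on the enlarged outermost piece $X_0\cup A_j$ the support of $C_{\phi_2}$ becomes $B(\ld'',+)\sm\psi(P_{X_0})\supset B(\ld',+)$; then $\ld'$ bounds a disk in this support and Lemma~\ref{support} forces $\lf f_{\phi_2}\iv(\ld')\rf\cap(X_0\cup A_j)=\emp$, killing exactly the loop $\ell$. On every other component the isotopy $\psi|_{\rs}$ lifts via the covering property of $f_{\phi_1}$, giving isotopic preimages. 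Only after (I$'$) and (III$'$) are established on $\rs$ does the paper extend $\psi$ to $\hb$, using explicit $3$-ball neighborhoods $V_i$ of the arcs $a_i$ and a sequence of compressions of $H_Q\cup H_R$, to obtain (II$'$). Your sketch collapses all of this into a list of desiderata for $\psi$ without showing they can be met simultaneously.
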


This lemma implies Proposition \ref{phi}: 

\begin{proof}[Proof of Proposition \ref{phi} with Lemma \ref{induction} assumed] 
Clearly Conclusions (I'), (II'),  (III') on $\phi_2$ correspond to Assumptions (I), (II), (III) on $\phi_1$, respectively. 
Therefore, starting from the base case that ${\phi_1} = id$, we repeatedly apply Lemma \ref{induction} and  inductively reduce the number of the loops of $L_{\phi_1}$.
Since $L_{\phi_1}$ always contains at least two outer most loops, we can proceed out induction, preserving the loop isotopic to $\ld_{\phi_1}$.
Thus, when there is exactly one loop isotopic to $\ld$ left, we obtain $\phi_2$ satisfying (i), (II).
Hence this $\phi_2$ realizes Proposition \ref{phi} (as discussed above).
\end{proof}

\begin{proof}[Proof (Lemma \ref{InductionLemma})]
First, we  construct a homeomorphism $\psi: \rs \to \rs$ such that $\psi \cc \phi_1|_{\rs}$ is a homeomorphism from $\rs$ to itself satisfying (I') and (III').
Later, we extend $\psi$ to a homeomorphism from $\hb$ to itself, such that $\psi \cc \phi_1\colon \hb \to \hb$  satisfies (II') (with $\psi \cc \phi_1 = \phi_2$).

Let $D_\ell$ be the disk of $\Dl_{\phi_1}$ bounded by $\ell$.
Let $Q$ be the outermost component of $\Fc \sm L_{\phi_1}$ bounded by $\ell $. 
Let $H_Q$ be the closure of the component of $H_\Fh \sm \Dlt$ bounded by $Q$.
Then $H_Q \cap \Fc = Q \cup \ell$, and,
by (II),  $~\e_{\phi_1}(H_Q)$ is contained in the closure of a component of $\hb \sm D'_{\ld'}$.
Let $R$ be the component of $\Fc \sm L_{\phi_1}$ adjacent to $Q$ along $\ell $.
Then $\pt R$ bounds a unique multidisk consisting of disks of $\Dl_{\phi_1}$.
Similarly let $H_R$ be the closure of the component of $H_\Fh \sm \Dl_{\phi_1}$ bounded by this multidisk; then $H_R \cap \Fc = R$.

Since $\e_{\phi_1}\cn H_\Fh \to \hb$ is an embedding, we regard subsets of $H_\Fh$ also as their images in $\hb$ under $\e_{\phi_1}$.
By this convention, the  disk $D_\ld$ separating $H_Q$ and $H_R$ is contained in the disk $D'_\ldp$, and the interiors of $H_Q$ and $H_R$ are contained in the different components of $\hb \sm D'_\ldp$.  
Our gold is to isotope $H_\Fh$ in $\hb$ keeping the puncture of $\Fc$ on $\rs$ so that $H_Q \cup H_R$ moves to   the single component of $\hb \sm D'_\ldp$ containing $H_R$ while  $H_\Fh \sm (H_Q \cup H_R)$ is fixed.
Such an isotopy induces a desired homeomorphism $\psi\cn \hb \to \hb$.

Let $B(\ld', +)$ and $B(\ld', -)$ denote the components of $\rs \sm \ld'$. 
Since $Q$ and $R$ are adjacent, we can assume that ${\it Supp} (C_{\phi_1}|Q) = B(\ld',+) \sm P_Q$ and ${\it Supp}(C_{\phi_1}|R) = B(\ld', -) \sm P_R$.
The full supports ${\it Supp} (C_{\phi_1}|Q)$ and ${\it Supp}( C_{\phi_1}|R )$ are punctured disks, and thus $Q$ and $R$ have at least one puncture  by the definition of almost good  projective structures. 
Let $q_1, q_2, \dt, q_h$ be the punctures of $Q$.
Let $\ld''$ be a round circle in the punctured disk $B(\ld', -) \sm P_\Fc$ parallel to $\ld' = \pt B(\ld', -)$ so that $\ld''$ and $\ld'$ bounds a round annulus $A$ in $B(\ld', -) \sm P_\Fc$. 
Similarly, let $B(\ld'', +)$ and $B(\ld'', -)$ be the components of $\rs \sm \ld''$ so that $B(\ld'', +) = B(\ld', +) \cup A'$ and $B(\ld'', -) = B(\ld', -) \sm A'$.

Let $a_1, a_2, \dt, a_h$ be disjoint paths on the punctured disk $B(\ld'', +) \sm P_\Fc$ such that $a_i$  connects the point $q_i$ to a point $r_i$ in $int(A')$ for each $i \in  \{1, 2, \dt, h\}$. 
We can in addition assume that each $a_i$ transversally intersects $\ld'$ in a single point. 
Pick a 2-disk neighborhood $U_i$ of $a_i$ in $B(\ld'', +)$ such that $U_i$ contains no points of $P_\Fc$ except $q_i$ and $U_i$ intersects $\ld'$ in a single arc (Figure \ref{V_i}).
In addition assume that $U_1, U_2, \dt, U_h$ are disjoint. 
For each $i \in \{1,2, \dt, h\}$, choose a homeomorphism  $\psi_i\colon \rs \to \rs $ {\it supported} on $U_i$ such that $\psi_i(q_i) = r_i$ (i.e.\ $\psi_i$ is the identity map on $\rs \sm U_i$).
Let $\psi = \psi_1 \cc \psi_2 \cc \dt \cc \psi_h\colon \rs \to \rs$.
Then $\psi$ is a homeomorphism supported on $\sq_{i=1}^h U_i =: U$.
Then there is an isotopy $\xi_\psi\cn [0,1] \times \rs \to \rs$ of $\rs$ supported on $U$ between $\psi$ and the identity map.

The restriction of $f_{\phi_1}$ to $\Fc \sm f_{\phi_1}\iv(P_\Fc)$ is a covering map onto $\rs \sm P_\Fc$.
Since $A'$ is disjoint from $P_\Fc$, via this covering map, $A'$ lifts to finitely many copies $A_1, A_2, \dt, A_J$ of $A$ in $\Fc$.
Therefore, letting $L_{\phi_1}^-  = \lf f_{\phi_1}\iv(\ld'') \rf$,  each $A_j\,(1< j < J)$ is bounded by a loop of $L_{\phi_1}$ and a loop of  $L^-_{\phi_1}$. 
Then  the components of $\Fc \sm L_{\phi_1}$   bijectively correspond to the components of $\Fc \sm L_{\phi_1}^-$ by adding or subtracting   $A_1, A_2, \dt, A_J$. 
By Assumption (III), if $X$ is a component of $\Fc \sm L_{\phi_1}$, then $C_{\phi_1}|X$ is an almost good surface fully supported on either $B(\ld', +) \sm P_X$ or $B(\ld', -) \sm P_X$.
Accordingly, if $X$ is a component  of $\Fc \sm L_{\phi_1}^-$, then $C_{\phi_1}|X$ is a good genus-zero surface supported on either $B(\ld'', +) \sm P_X$ (Type One) or $B(\ld'', -) \sm P_X$ (Type Two).
Since $Q$ is outermost, $Q$ has exactly one boundary component, which is $\ell  $.   
Thus there exists a unique $j \in \{1,2, \dt, J\}$ such that $A_j$ adjacent to $Q$ along $\ell$. 
Then $Q \cup A_j$ is a component of $\Fc \sm L_{\phi_1}^-$ of Type 1.

Let $X$ be a component of $\Fc \sm L^-_{\phi_1}$  of  Type Two. since $\ld'$ and ${\it Supp}(C_{\phi_1}|X)$ are disjoint, by Lemma \ref{support}, $L_{\phi_1} \cap X = \emptyset$.
Since $\psi$ is supposed on the complement of ${\it Supp}( C_{\phi_1}|X )$, we have $L_{\phi_2} \cap X = \emptyset$. 

Next let  $X$ be a component of  $\Fc \sm L^-_{\phi_1}$ of  Type One.
Then $f_\Fc\iv(A') \cap X$ is a regular neighborhood of $\pt X$, and it is a union of some $A_j$'s.
Thus $X \cap L_{\phi_1}$ is a multiloop on $X$ isotopic to $\pt X$.
Suppose that $X$ does $not$ contain $Q$.
Since $U$ is disjoint from  $P_\Fc \sm P_Q$, then $P_X$ is disjoint from $U$. 
Thus ${\it Supp}( C_{\phi_1}|X )$ contains  $U$.
Since the isotopy $\xi_\psi$ of $\rs$ is supported on $U$, lifting $\xi_\psi$  via $dev(C_{\phi_1}|X)$, we obtain an isotopy  of $X \cap L_{\phi_1}$ to $X \cap L_{\phi_2}$ on $X$. 

Last suppose that $X$ contains $Q$. 
Then $X = Q \cup A_j$ for some $j \in \{1,2, \dt, J\}$ and  $X \cap L_{\phi_1} = \ell$.
The homeomorphism  $\psi$ moves $P_X \st B(\ld',+)$ to $int(A') \st B(\ld', -)$.
Then $C_{\phi_2}|X$ is a good surface fully supported on $B(\ld'', +) \sm \psi(P_X)$.
Since the support $B(\ld'', +) \sm \psi(P_X)$ contains the disk $B(\ld', +)$ bounded by $\ld'$, therefore, by Lemma \ref{support}, $~X \cap L_{\phi_2} = \emptyset$.
We have seen that on all other components $X$ of $\Fc \sm L_{\phi_1}^-$, the multiloop $L_{\phi_1} \cap X$ is isotopic to $L_{\phi_2} \cap X$. 
Hence   $L_{\phi_2}$  is isotopic to $L_{\phi_1} \sm \ell$ on $\Fc$ (\,(I')\,).

If $Z$ is a component of $\Fc \sm L_{\phi_2}$, 
then each boundary component of $Z$ covers $\ldp$ via $dev( C_{\phi_2}|Z ) = f_{\phi_2}|_Z$.  
In addition, under the isotopy of $\Fc$ moving $L_{\phi_2}$ to $L_{\phi_1} \sm \ell$, the component $Z$ is isotoped to either the union $Q \cup R$ or  a component of $\Fc \sm L_{\phi_1}$ that is $not$  $Q$ or $R$.
First suppose that $Z$ is isotopic to $Q \cup R$.
We have $\psi(P_Q) \st A'$ and $\psi(P_R) = P_R \st B(\ld'', -)$.
Thus, since $\psi$ is a homeomorphism, $dev(C_{\phi_2}|Z)\colon Z \to \hb$ takes all punctures of $Z$ to distinct points in  $B(\ld', -)$.
Therefore, since all boundary components of $Z$ cover $\ld'$,  thus $C_{\phi_2}|Z$ is an almost good  surface whose support is $B(\ld', -) \sm \psi(P_Z)$.
Next suppose that $Z$ is isotopic to a component $Y$ of $\Fc \sm L_{\phi_1}$.
Then, since $\psi$ fixes $P_Z$, obviously $\psi(P_Z)$ is contained in either $B(\ld', +)$ or $B(\ld', -)$. 
Therefore $C_{\phi_2}|Z$ is an almost good surface whose support is, accordingly, $B(\ld', +) \sm \psi(P_Z)$ or $B(\ld', -) \sm \psi(P_Z)$.
 (Moreover it is easy to see that  $C_{\phi_2}|Z = C_{\phi_1}|Y$ since a projective structure is defined up isotopy of its base surface.)
Thus (III') holds.

We now extend the homeomorphism $\psi\colon \rs \to \rs$ to a homeomorphism $\psi\colon \hb \to \hb$.
Let $H'(\ld', +) = Conv(B(\ld', +))$ and $H'(\ld', -) = Conv(B(\ld', -))$, so that  $\hb \sm D'_\ldp = H'(\ld', +) \sq H'(\ld',-)$. 
Similarly, let $H'(\ld'', +) = Conv(B(\ld'', +))$ and $H'(\ld'', -) = Conv(B(\ld'', -))$, so that  $\hb \sm Conv(\ld'') = H'(\ld'', +) \sq H'(\ld'',-)$.

For each $i \in \{1,2,\dt, h\}$, let $V_i$ be a neighborhood of $a_i$ in $H'(\ld'', +)$ homeomorphic to a closed 3-disk that is a ``natural extension'' of $U_i$\,:
Namely we require
\begin{itemize}
\item[(i)] $V_i \cap \rs = U_i$,
\item[(ii)] $V_i \cap D'_\ldp$ is a 2-disk,
\item[(iii)]  $\pt V_i \cap H_\Fh$ is contained in $H'(\ld', +)$, and it is  the union of a closed 2-disk $K_i$  in $\h^3$ and the point $q_i$ in $\rs$,
\item[(iv)] $V_1, V_2, \dt, V_n$ are disjoint, and
\item[(v)] $\pt V_i, D_{\ldp}, \pt H_\Fh$ intersects transversally
\end{itemize}
(see Figure \ref{V_i}).
By (i) and (ii), $\pt( V_i \cap D'_\ldp)$ is a circle that is the union of  an arc properly embedded in $D'_\ldp$ and the arc $U_i \cap \ld'$.
By (iii), $V_i \cap H_\Fh$ is a closed 3-disk $T_i$.

\begin{figure}[htbp]
\includegraphics[width=3in]{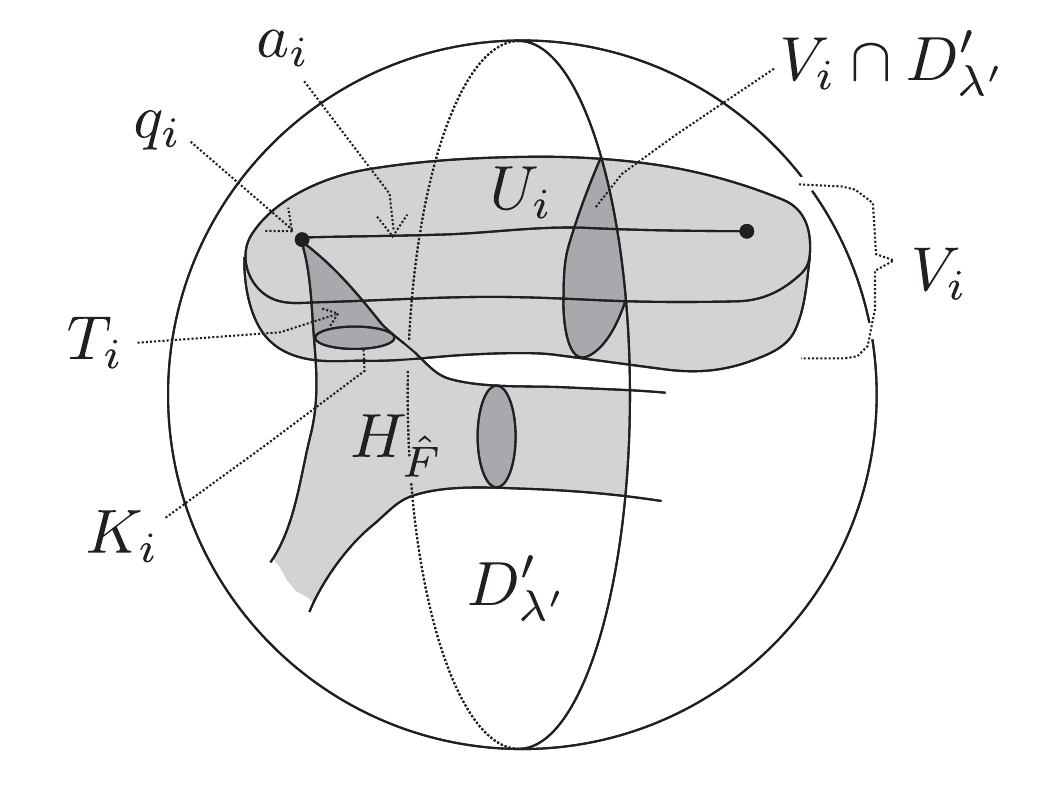}

\caption{A picture of $V_i$.}
\label{V_i}
\end{figure}

Then, for each $i = 1, 2, \dt, h$, we can extend the homeomorphism $\psi_i\colon \rs \to \rs$ supported on $U_i$ to a homeomorphism $\sigma_i\colon \hb \to \hb$ supported on $V_i$ so that the boundary of $\sigma_i (T_i)$ transversal intersects  $D'_\ldp$ in a single circle;
let $D_{T_i}$ be the 2-disk $\sigma_i (T_i) \cap D'_\ldp$ bounded by the circle
(see (i) and (ii) in Figure \ref{isotopies}).
Accordingly, $\sigma = \sigma_1 \cc \sigma_2 \cc \dt \cc \sigma_h$ has extended to a homeomorphism from $\hb$ to itself.
In addition the isotopy $\xi_\sigma$ of $\rs$ extends to an isotopy of $\hb$ supported on $V = \sq\sp h_{i =1}     V_i$ between $\sigma$ and the identity map.  

Since we have isotoped $H_\Fh (= \e_{\phi_1}(H_\Fh))$ by $\xi_\sigma$, we now regard subsets of $H_\Fh$  as their images under the embedding $\sigma \cc \e_{\phi_1}\cn H_\Fh \to \hb$.
(In partiular, the puncture $q_i$ of $\Fc$ is regarded as the point  $r_i \in \rs$.)
Since $\sigma$ is a homeomorphism,  $\eta_\sigma := \sigma \cc \eta_\Fc\colon \Fc \times [0,1] \to \sigma \cc {\rm Im}(\eta_\Fc) \st \hb$ is a homeomorphism that takes $\Fc \times \{0\}$ to $\Fc$ (= \ $\sigma \cc \e_\Fh(\Fc)$)  and $\Fc \times \{1\}$ to $\rs \sm P_\Fc~$ (= \ $\rs \sm \sigma \cc \e_\Fh(P_\Fc))$.  

Choose a puncture $r$ of $R$.
Let $\beta_1, \beta_2, \dt, \beta_h$ be disjoint paths on the punctured sphere $\Fc \, (\st \hb)$ such that $\beta_i$ connects $r_i (= q_i)$ to $r$ for each $i = 1,2, \dt, h$ (see Figure \ref{isotopies} (ii)).
In addition, we can assume that $\beta_i$ and $T_j$ are disjoint if $i \neq j$ and that $\beta_i$ transversally intersects $\pt K_i$ and $D'_\ldp$ minimally (i.e. it intersects $\pt K_i$ in a single point and $D'_{\ldp}$ in two points).
Then, since $L_{\phi_1}$ is contained in $D_\ld'$, then $\beta_i$ is contained in the connected subsurface $Q  \cup R$ of $\Fc$ and it intersects $\ell$ in a single point. 

The union of the product region ${\rm Im} (\eta_\sigma)$ and the puncture points $P_\Fc$ is naturally homeomorphic to the product $F_\Fh \times [0,1]$ with the  interval $[0,1]$ above each puncture point collapsed to a single point. 
Then for each $i \in \{1,2,\dots, h\}$, since $\beta_i$ is a path on $\Fc$ connecting the puncture points $r_i$ and $r$,
then $\eta_\sigma(\beta_i \times [0,1]) \cup \{r_i, r\} =: E_i$ is a 2-disk properly embedded in ${\rm Im} (\eta_\sigma) \cup P_\Fc$.
Then the disks $E_1, E_2, \dt, E_h$ share the point $r$, and $E_1 \sm \{r\}, E_2 \sm \{r\}, \dt, E_h \sm \{r\}$ are disjoint.
In addition, we can assume that each $E_i$ transversally intersects $D'_\ldp$, if necessarily, by a small isotopy of $E_i$ in ${\rm Im}(\eta_\sigma)$. 
Then there is a unique arc component $e_i$ of $E_i \cap D'_\ldp$  connecting  the points of $\beta_i \cap D'_\ldp$.
Then $e_i$ connectes the disks $D_{T_i}$ and $D_\ell$. 
Take disjoint (small) regular neighborhoods   $N(E_1), N(E_2), \dt, N(E_h)$ of $E_1 \sm \{r, r_1\}, E_2 \sm \{r, r_2\}, \dt, E_h \sm \{r, r_h\}$, respectively, in ${\rm Im}(\eta_{\sigma})$, such that $N(E_i)  \cap V_j = \emptyset$ for all $i, j \in \{1,2, \dt, h\}$ with $i \neq j$.
In addition, we can assume that  $N(E_i) \cap D'_\ldp$ is a regular neighborhood of the one-dimensional manifold $E_i \cap D'_\ldp$ properly embedded in $D'_\ldp$ and that $N(E_i) \cap \pt K_i$ is a single arc, which is a regular neighborhood of the single point $E_i \cap \pt K_i$ in the circle $\pt K_i$.
In particular, there is  a component of $N(E_i) \cap D'_\ldp$ that is  a regular neighborhood $N(e_i)$ of the  arc $e_i$ in $D'_\ldp$.
Then  $N(e_i)$ is a rectangular strip connecting $D_\ell$ and $D_{T_i}$, and $N(e_i) \cup D_{T_i}$ is a (topological) 2-disk properly embedded in the 3-disk $T_i \cup N(E_i) \cup \{r, r_i\}$.

Take a small regular neighborhood of $N(e_i) \cup D_{T_i}$ in $T_i \cup N(E_i)$.
Then,  since $N(e_i) \cup D_{T_i}$ is contained in $D'_\ldp$, we parametrize this regular neighborhood as $(N(e_i) \cup D_{T_i}) \times [-1,1]$ so that $(N(e_i) \cup D_{T_i}) \times [-1, 0)  \st H'(\ld', -)$ and $(N(e_i) \cup D_{T_i}) \times (0, 1] \st H'(\ld', +)$.
Then $N(e_i) \times \{-1\}$ is a 2-disk properly embedded in  the 3-disk $N(E_i)$, splitting $N(E_i)$ into two 3-disks.
Let $\np(E_i)$ denote the one of these two 3-disks containing $N(e_i) \times [-1,1]$(see Figure \ref{isotopies} (ii)).
Then $int(\np(E_i))$ is disjoint from $int(H_\Fh)$, and  the intersection of $\pt \np(E_i)$ and $\pt H_\Fh$ is  a 2-disk  contained in the subsurface $Q \cup R$ of $\Fc$.
Therefore we can isotope $H_\Fh$ to $H_\Fh \cup \np(E_i)$ in $\hb$, fixing $H_\Fh \sm (H_Q \cup H_R)$ and $\pt \h^3$.
Accordingly we now regard subsets of $H_\Fh$ as their images in $\hb$ under the composition of 
$\sigma \cc \e_\Fh \cn H_\Fh \to \hb$ and this isotopy.

Since $\np(E_i)$ and $T_i$ have  disjoint interiors and their boundaries intersect in a 2-disk, therefore $\np(E_i) \cup T_i$ is a 3-disk.
Then  $(N(e_i) \cup D_{T_i}) \times \{-\frac{1}{2}\}$ is a 2-disk properly embedded in the 3-disk $\np(E_i) \cup T_i$, 
and therefore it separates $\np(E_i) \cup T_i$ into two closed 3-disks.
Let $D_i^+$ and $D_i^-$ denote these two 3-disks so that $D_i^+$ contains $(N(e_i) \cup D_{T_i}) \times [-\frac{1}{2}, 1]$ and $D_i^-$ contains $(N(e_i) \cup D_{T_i}) \times [ -1, -\frac{1}{2}]$ (see Figure \ref{isotopies} (iii)).
Then $D_i^+$ is contained in $\h^3$.
On the other hand $D_i^-$ is contained in $H'(\ld', -)$ and it intersects $\rs$ at the point $r_i$.
Then $D_i^+$ and $cl(H_\Fh \sm D_i^+)$ are 3-disks with disjoint interiors, and their boundaries share a single 2-disk. 
Thus, similarly, there is an isotopy of $\hb$ supported on a small neighborhood of $D_i^+$ that moves $H_\Fh$ to $cl(H_\Fh \sm D_i^+)$; in particular it  fixes $H_\Fh \sm (H_Q \cup H_R)$ and $\rs$ (see Figure \ref{isotopies} (iv)).

For each $i \in \{1,2, \dots, h\}$, modify $\sigma_i\colon \h^3 \to \h^3$  by postcomposing it with be the homeomorphism induced by  the composition of  the isotopies of $\hb$ applied after $\sigma_i$ (as in Figure \ref{isotopies}), which transforms the initial image $\e_{\phi_1}(H_\Fh)$ to the 3-disk $cl(H_\Fh \sm D_i^+)$ in the preceding paragraph. 
Then we still have $\sigma_i|\rs = \psi_i$.
To compare the difference between them, below we regard subsets of $H_\Fh$ as subsets of $\e_\Fh(H_\Fh)$, returning to the initial identification.
Therefore $H_Q$ of $H_\Fh$  was transformed to $H_Q \sm T_i$ and $H_R$ to $H_R \cup D^-_i$; the rest of $H_\Fh$ reminded the same. 
Thus, topologically, $\sigma_i$ has just moved the puncture point $q_i$ on $\rs$ across the loop $\ld'$ and accordingly $q_i$ has moved from on the boundary of $H_Q$  to to the boundary of  $H_R$ across $\ell$, while  $\sigma_i$  fixes $H_\Fh \sm (H_Q \cup H_R)$.

For different $i, j \in \{1,2,\dots, h\}$, since $V_i$ and $N(E_j)$ are disjoint, thus $T_i$ and $N(E_j) \sm r$ are disjoint; then the homeomorphisms $\sigma_i\cn \hb \to \hb$  $(i \in \{1,2, \dt, h\})$ have disjoint support.
Then let $\sigma\cn \hb \to \hb$ be their composition $\sigma_1 \cc \sigma_2 \cc \dots \sigma_h$, which is obviously homeomorphism.
Then $\sigma$ transforms $H_Q$ to $H_Q \sm (T_1 \cup T_2 \cup \dt \cup T_h)$, which contains no puncture points and $H_R$ to $H_R \cup (D^-_1 \cup D^-_2 \cup \dt \cup D^-_h)$, which contains $P_Q \cup P_R$.
Then $H_Q \sm (T_1 \cup T_2 \cup \dt \cup T_n)$ is topologically a 3-disk in $H'(\ld', +) \cap \h^3$ and its boundary intersects $D'_\ldp$ in a single 2-disk as the boundary of  $H_Q$ does.
Therefore, there is an isotopy of $\hb$ supported in small neighborhood of the 3-disk $H_Q \sm (T_1 \cup T_2 \cup \dt \cup T_n)$ that  moves its neighborhood $H_Q \sm (T_1 \cup T_2 \cup \dt \cup T_n)$ into $H'(\ld', -)$.
By this isotopy the entire union $H_R \cup H_Q$ has moved into $H'(\ld', -)$ while its complement $H_\Fh \sm (H_R \cup H_Q)$ remains fixed. 
Modify the homeomorphism $\sigma\colon \hb \to \hb$ by postcomposing with this isotopy. 
Thus we have  $(\sigma \cc \e_{\phi_1})\iv(D'_\ldp)$ is $\Dl_{\phi_1} \sm D_\ell$.

Combining with (I'), since $\Dl_{\phi_1} \sm D_\ell$ in $H_\Fh$ is bounded by $L_{\phi_1} \sm \ell$ in $H_\Fh$,  thus the multiloop $L_{\phi_1} \sm \ell$ is  isotopic to $L_{\psi \cc \phi_1}$ on $\Fc$.
There is an isotopy of $\hb$ that fixes $\rs$, preserves $\sigma(H_\Fh)$, in particular the surface $\sigma(\Fc) (\st \h^3)$, and moves $\sigma(L_{\phi_1} \sm \ell)$ into $D'_{\ldp}$.   
Finally extend $\psi\cn \rs \to \rs$ to the homeomorphism of $\h^3$ obtained by post-composing $\sigma$ with this isotopy.
Then we have $\pt \Dl_{\psi \cc \phi_1} = L_{\psi \cc \phi_1}$ (\,(II')\,).
\end{proof}
 
 \begin{figure}[htbp]
\begin{center}

\includegraphics[width=5in]{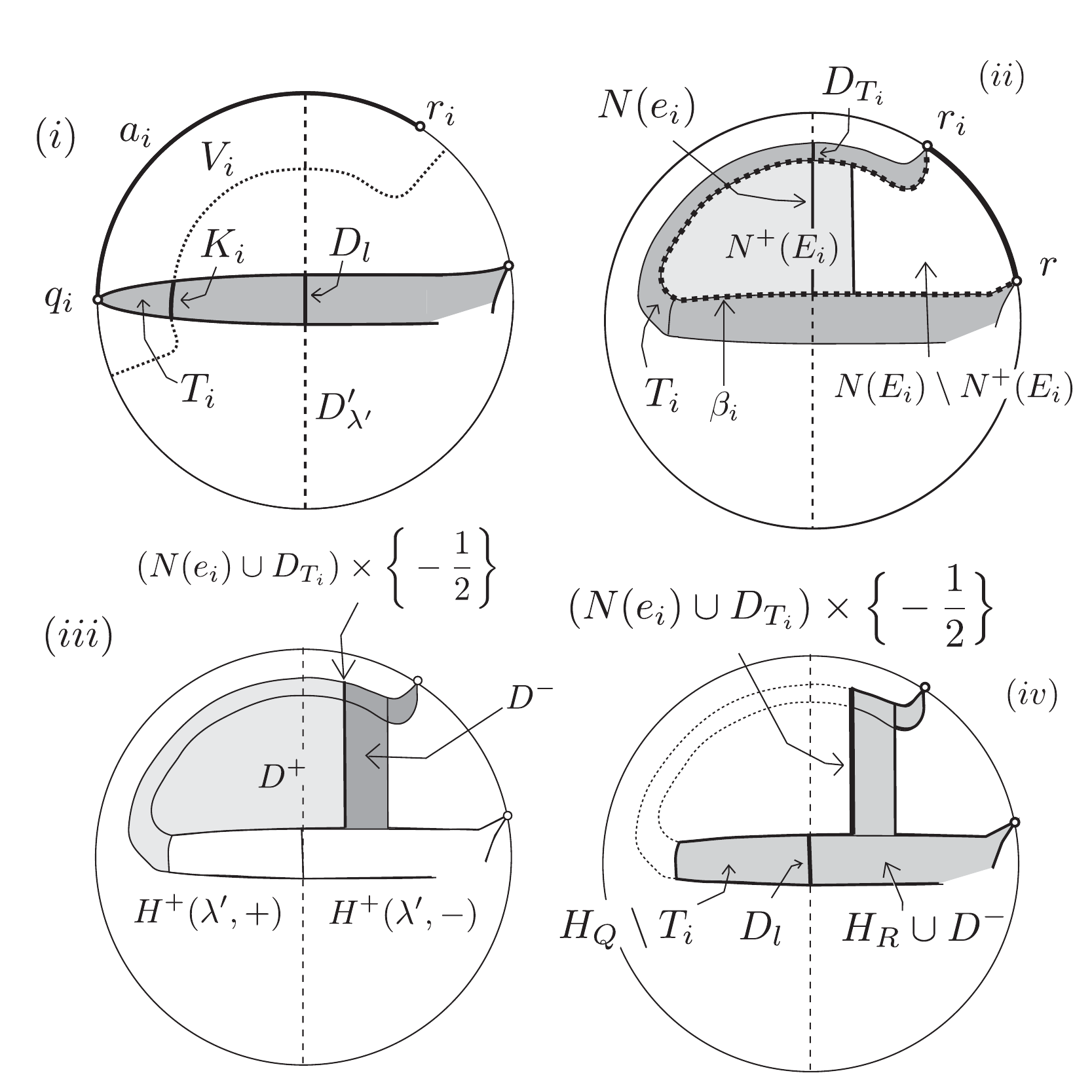}
\caption{The series of isotopes of $H_\Fh$ in $\hb$.}
\label{isotopies}
\end{center}

\end{figure}

\begin{proof}[Proof (Proposition \ref{reduced}).]
Let $\phi\colon \ol{\h^3} \to \ol{\h^3}$ be the homeomorphism obtained by Proposition \ref{phi}.
By Proposition \ref{phi} (ii), there is an isotopy  between $\ld'$ and $\e_\phi(\ld)$ in ${\rm Im}(\eta_\phi)$.
By the product structure of $\eta_\phi$, there is also an isotopy  between the loops $\eta_\phi(\ld \times \{0\}) = \e_\phi(\ld) $ and $\eta_\phi(\ld \times \{1\})$ in ${\rm Im}(\eta_\phi)$. 
Then there is an isotopy between the loops $\ld'$ and $\eta_\phi(\ld  \times \{1\})$ in ${\rm Im} (\eta_\phi)$.
By postcomposing with $\phi\iv$, we have an isotopy between the loops $\phi\iv(\ld')$ and $\phi\iv \cc \eta_\phi(\ld \times \{1\}) = \eta_\Fc(\ld \times \{1\}) = \mu'$ in $\phi\iv({\rm Im}(\eta_\phi)) = {\rm Im}(\eta_\Fc)$.
Those loops $\phi\iv(\ld')$ and $\mu'$ are on the punctured sphere $\rs \sm \e_\Fh(P_\Fc) = \eta_\Fc (\Fc \times \{1\})$.
By the obvious projection from ${\rm Im}(\eta_\Fc) \cong \Fc \times [0,1]$ to $\Fc \times \{1\}$, the isotopy between $\phi\iv(\ld')$ and $\mu'$ in ${\rm Im}(\eta_\Fc)$ induces to a homotopy between $\phi\iv(\ld')$ and $\mu'$ on $\rs \sm \e_\Fh(P_\Fc)$; 
Thus there is moreover an isotopy between them. 
Thus, via $f_\Fc$, this isotopy between  $\phi\iv(\ld')$ and $\mu'$  lifts to an isotopy between the multiloops $f_\Fc\iv(\phi\iv(\ld')) = f_\phi\iv(\ld')$ and $f_\Fc\iv(\mu')$ on $\Fc$.
In particular, their essential parts $\lf f_\phi\iv(\ld') \rf$ and $\lf f_\Fc\iv(\mu')\rf$ are also isotopic on $\Fc$.
By Proposition \ref{phi} (i),  there is an isotopy between  $\lf f_\phi\iv(\ld') \rf$ and the loop $\ld$ on $\Fc$.
Hence $\lf  f_\Fc\iv(\mu') \rf$ is a single loop.
\end{proof}

%%%%SECTION  A good punctured sphere = a basic structure + grafting 

\section{A characterization of good structures by grafting}

Let $F$ be a sphere $\Fh$ with  $n$ punctures $p_1, p_2, \dt, p_n$. 
Let $C = (f, \rho_{id})$ be a good projective structure on  $F$, where $\rho_\id\colon \po(F) \to \psl$ is the trivial representation. 
Then the developing map $f\colon F \to \rs$ continuously extends to a branched covering map $f\colon \Fh \to \rs$.
Since $C$ is a good structure,  $f(p_1) =: q_1,  f(p_2) =: q_2, \dt, f(p_n) =: q_n$ are distinct points on $\rs$, and ${\it Supp}(C)$ is  the $n$-punctured sphere $\rs \sm \{q_1, q_2, \dt, q_n\} $.
Let  $f_0: F \to {\it Supp}(C) $ be  a homeomorphism such  that (its extension satisfies) $f_0(p_i) = q_i$ for all $i \in \{1,2, \dt, n\}$.
Then the pair $(f_0, \rho_{id})$ represents  an embedded projective structure on $F$ associated with $C$.
On the other hand, every embedded projective structure on $F$ associated with $C$ can be obtained in such a way. 
We  prove

\begin{proposition}\label{grafting}
Every good projective structure $C = (f, \rho_\id)$ on a punctured sphere $F$ can be obtained by grafting a basic structure associated with $C$ along a multiarc  (each arc of which connects different punctures of $F$). 
\end{proposition}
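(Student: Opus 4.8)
The plan is to restate the problem in terms of branched coverings of $\rs$ and then construct the required multiarc. Since $C=(f,\rho_{id})$ is a good structure on the $n$-punctured sphere $F$, its developing map extends to a topological branched covering $\fh\colon\Fh\to\rs$ of the $2$-sphere, of some degree $d$, whose branch points are the distinct points $q_i=\fh(p_i)$ and whose ramification points are exactly $p_1,\dots,p_n$, with ramification index $e_i:=e_{p_i}\ge 2$; Riemann--Hurwitz gives $\sum_{i=1}^{n}(e_i-1)=2(d-1)$, so $d\ge 2$, and up to topological equivalence $\fh$ is determined by its monodromy tuple $(\tau_1,\dots,\tau_n)$ in $S_d$, each $\tau_i$ a single cycle of length $e_i$, with $\tau_1\cdots\tau_n=\mathrm{id}$ and $\langle\tau_1,\dots,\tau_n\rangle$ transitive. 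A basic structure $C_0$ associated with $C$ is the projective structure of a homeomorphism $f_0\colon F\to\rs\sm\{q_1,\dots,q_n\}$. Unwinding the definition of grafting in \S\ref{graft}, the equality $C=Gr_A(C_0)$ for a multiarc $A$ whose arcs join distinct punctures is equivalent to the existence of a multiarc $\widehat A$ on the base surface $F$ of $C$ --- the gluing locus of the grafting --- whose arcs join distinct punctures and on each component of whose complement $f$ is a homeomorphism onto a subregion of $\rs$, with the extra feature that $f(\widehat A)$ is a union of $d-1$ arcs joining the $q_i$, each covered exactly twice by arcs of $\widehat A$ (an Euler-characteristic count then forces $\widehat A$ to consist of $2(d-1)$ arcs). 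So the whole statement reduces to producing such a multiarc $\widehat A$ on an arbitrary good $F$.

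I would produce $\widehat A$ by an induction on $d$. For $d=2$ one has $n=2$, $e_1=e_2=2$, $\fh$ is the squaring map, and $\widehat A$ is the union of the two lifts of a short arc joining $q_1$ to $q_2$. For the inductive step, the idea is to find a short embedded arc $\beta\st\rs$ joining $q_i$ to $q_j$ for a well-chosen pair $i\ne j$, missing the other $q_k$; since $\beta$ carries no branch point in its interior, $\fh^{-1}(\beta)$ is a disjoint union of $d$ arcs in $\Fh$, and the two of them that run into the ramification points $p_i$ and $p_j$ will serve as the ``last'' pair of the multiarc. Cutting $\Fh$ along these two lifts and discarding the slit sphere lying between the two corresponding sheets at $p_i,p_j$ yields a branched covering of degree $d-1$, hence a good structure of degree $d-1$ on a punctured sphere with the same branch points --- with the bookkeeping that $e_i$ and $e_j$ drop by one, so that a puncture whose index was $2$ is absorbed, to be reinstated afterwards as an extra (unramified) puncture. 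Applying the inductive hypothesis and reattaching the two lifts just removed --- and using that grafting along a disjoint union of arcs is the iterated grafting along the individual arcs --- exhibits $C$ as $Gr_A(C_0)$.

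The point at which I expect the real work to lie is the choice of $\beta$ (equivalently, of a graph $\Gamma\st\rs$ through the $q_i$ whose preimage, after deleting the ramification points, \emph{is} a multiarc joining distinct punctures on whose complement $f$ is injective): for a careless choice the preimage acquires vertices of valence $\neq 2$ at unramified preimages of the $q_i$, or arcs returning to a single puncture, and it is not clear a priori that $\beta$ or $\Gamma$ can be chosen compatibly with the given monodromy tuple. The clean way to clear this is to use Theorem \ref{connected}: the space of good structures on $F$ with a fixed ramification profile $(e_1,\dots,e_n)$ is connected --- up to labeling of punctures it is the Hurwitz space $\mc{H}(e_1,\dots,e_n)$, connected by Liu--Osserman --- so it suffices to show that the good structures realized as $Gr_A(C_0)$ form a non-empty open and closed subset. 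Non-emptiness comes from grafting along the multiarc dual to any planar multigraph on $\{q_1,\dots,q_n\}$ with degree sequence $(e_1-1,\dots,e_n-1)$; openness from perturbing the branch points and dragging the multiarc along; and closedness --- the delicate point --- from the fact that the monodromy is locally constant along a convergent family, which bounds the geometric complexity of the multiarcs and so yields a limiting multiarc of the same form. Connectedness then gives everything, and since grafting does not move the branch points, the $C_0$ produced automatically has support $\rs\sm\{q_1,\dots,q_n\}$, i.e.\ is a basic structure associated with $C$. (One should keep in mind throughout that such a $C_0$, and hence the multiarc, is only determined up to the pure mapping class group of $F$, which is consistent with the non-uniqueness allowed in the statement.)
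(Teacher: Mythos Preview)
Your proposal identifies the two genuine ingredients --- the existence of a multiarc with the prescribed endpoint multiplicities $(e_1-1,\dots,e_n-1)$, and the Liu--Osserman connectedness of the Hurwitz space --- but the way you assemble them leaves a real gap. The closedness step (``the monodromy is locally constant \dots\ which bounds the geometric complexity of the multiarcs and so yields a limiting multiarc'') does not work as stated: local constancy of the monodromy only tells you that nearby covers are \emph{topologically equivalent}; it does not, by itself, give any control on the isotopy classes of the multiarcs $A_k$, which can differ by arbitrary elements of the pure mapping class group of $F$, so no subsequence of multiarcs need converge. The natural repair is to show instead that topological equivalence transports a grafting decomposition, i.e.\ that if $f_1 = Gr_A(f_0)$ and $\phi'\circ f_1 = f_2\circ\phi$ with $\phi,\phi'$ fixing the punctures, then $f_2$ is also a grafting of a basic structure --- but once you have that statement you no longer need the open--closed argument at all.

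This is exactly what the paper does. It first proves the combinatorial existence of a multiarc $A$ with $\delta_i$ ends at $p_i$ (your ``planar multigraph with the right degree sequence''), so that $C_1:=Gr_A(C_0)$ has the same ramification profile as $C$. It then uses connectedness of $\mR(d_1,\dots,d_k)$ not to run an open--closed argument but to prove directly that \emph{any} two branched covers with the same profile are topologically equivalent: a path $\tau_t$ in $\mR$ from $\tau_1$ to $\tau_2$ lets one lift an ambient isotopy of the branch set on the target to isotopies of both source and target, yielding homeomorphisms $\phi,\phi'$ with $\phi'\circ f = f_1\circ\phi$. The proof then finishes with the one--line identity
\[
f = (\phi')^{-1}\circ Gr_A(f_0)\circ\phi = Gr_{\phi^{-1}(A)}\bigl((\phi')^{-1}\circ f_0\circ\phi\bigr),
\]
which shows $C$ is a grafting of the basic structure $((\phi')^{-1}\circ f_0\circ\phi,\rho_{\id})$. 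This bypasses entirely the need to extract limiting multiarcs and replaces your inductive/compactness outline with a single algebraic manipulation. (Incidentally, your opening assumption that every $e_i\ge 2$ is not quite what the paper allows: some $p_i$ may be trivial ramification points, carried along as marked points, and Proposition~\ref{TopEquiv} handles these by appending extra paths $P_i(t)$ after the fact.)
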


For each $i \in  \{1,2, \dt, n\}$, let $d_i$ be the ramification index of $f$ at the ramified  point $p_i$.
If $d_i > 1$, then $p_i$ is called a {\it proper ramification point}; 
if $d_i = 1$, then $p_i$ is called a {\it trivial ramification point}, i.e.\ $f$ is a local homeomorphism at $p_i$.
In the latter case, we may regard $p_i$ and $q_i$ merely as marked points. 
Let $d$ be the degree of $f$, i.e.\  the cardinality of $f\iv(x)$ for  $x \in  {\it Supp}(C)$. 
Let $\dl = d -1$ and $\dl_i = d_i -1$ for each $i \in \{1,2, \dt, n\}$.
Clearly we have $\dl \geq \dl_i \, (\geq 0)$ and, by the Riemann-Hurwitz formula, $2 \dl  = \Sigma_{i=1}^n \dl_i ~(\in 2 \N)$. 
Therefore we have

\begin{equation}
\displaystyle 2 \max_{1 \leq i \leq n} \dl_i  \leq \sum_{i=1}^n \dl_i. \label{RH}
\end{equation}

\begin{lemma}\label{multiarc}
There is a multiarc $A$ on $F$ such that\\
(i) each arc of $A$ connects distinct punctures of $F$, and \\
(ii) for each $i  \in \{ 1,2, \dt, n \}$, there are exactly $\dl_i $ arcs of $A$ ending at $p_i$. 
\end{lemma}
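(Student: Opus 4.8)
The plan is to build $A$ one arc at a time, reducing the multiset of "remaining deficiencies" at each step, and to argue that the process can always continue until all deficiencies are exhausted. Concretely, I would set up an inductive process on the quantity $\Sigma \dl_i$. At a generic stage of the construction I will have placed some arcs of $A$ already and will be left with a revised tuple $(\dl_1', \dl_2', \dt, \dl_n')$ of nonnegative integers recording how many more arc-ends are still needed at each puncture, with $\Sigma_i \dl_i'$ still even (each arc placed removes exactly $1$ from two distinct entries, so parity of the sum is preserved) and — this is the crucial invariant to maintain — still satisfying the inequality $2\max_i \dl_i' \le \Sigma_i \dl_i'$, i.e.\ the analog of \eqref{RH}. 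The base case is exactly inequality \eqref{RH}, which holds by Riemann-Hurwitz.

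The inductive step: if $\Sigma_i \dl_i' = 0$ we are done. Otherwise, since the sum is positive and even, at least two entries are positive; let $p_j$ be an index achieving $\max_i \dl_i' =: m \ge 1$, and let $p_k$ ($k \ne j$) be any other index with $\dl_k' \ge 1$ — such a $k$ exists because if $p_j$ were the only positive entry we would have $\Sigma_i \dl_i' = m < 2m = 2\max_i\dl_i'$, contradicting the invariant. Draw an arc $a$ in $F$ connecting $p_j$ to $p_k$, chosen disjoint from the finitely many arcs already placed (possible since $F$ is a surface of positive genus-zero type with punctures — its fundamental groupoid is rich enough, or concretely one isotopes $a$ off the existing finite multiarc). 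Adjoin $a$ to $A$, and decrement $\dl_j'$ and $\dl_k'$ by $1$. I then need to check the invariant $2\max_i \dl_i'' \le \Sigma_i \dl_i''$ survives. The right side drops by $2$. For the left side: the new maximum $m''$ is at most $m$; if $m'' \le m-1$ then $2m'' \le 2m - 2 \le \Sigma_i\dl_i' - 2 = \Sigma_i\dl_i''$ and we are fine; if $m'' = m$, it is realized at some index $\ell \ne j$ (since $\dl_j'' = m-1$), so $\ell$ is a second entry equal to $m$ distinct from the one we just used, whence $\Sigma_i\dl_i' \ge \dl_j' + \dl_\ell' = m + m = 2m$, and actually we also had $\dl_k' \ge 1$ contributing, so $\Sigma_i\dl_i' \ge 2m+1$; combined with evenness, $\Sigma_i \dl_i' \ge 2m+2$, hence $\Sigma_i\dl_i'' \ge 2m = 2m''$. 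Either way the invariant is restored. The process strictly decreases $\Sigma_i \dl_i'$ by $2$ each time, so it terminates with all $\dl_i' = 0$; at termination puncture $p_i$ has received exactly $\dl_i$ arc-ends, giving (ii), and by construction every arc joins two distinct punctures, giving (i).

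The one point requiring a little care — and the only place I would expect friction — is the simultaneous disjointness and simplicity of all the arcs: each individual arc is easy to draw, but one must be sure that the whole finite collection can be realized as an embedded multiarc. This is handled by the standard fact that on a surface any finite collection of arcs with distinct endpoints (counted with the multiplicities above, so that possibly several arcs emanate from the same puncture) can be isotoped to be pairwise disjoint and embedded, which here follows because we add arcs one at a time into the complement of a finite, already-embedded multiarc in a surface with nonabelian (or at least infinite, when $n$ is small) fundamental group; the genus-zero, $n\ge 2$ hypothesis guarantees enough room. No orientation or projective-structure input is needed — the lemma is purely topological/combinatorial, and the reduction above is essentially a bookkeeping argument driven by \eqref{RH}.
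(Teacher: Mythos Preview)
Your inductive bookkeeping on the invariant $2\max_i \dl_i' \le \sum_i \dl_i'$ is correct and is essentially the combinatorial engine the paper uses. The gap is precisely where you anticipate friction: the assertion that one can always ``isotope $a$ off the existing finite multiarc'' is false. The complement of a multiarc in a punctured sphere is a disjoint union of disks, and if earlier choices have placed $p_j$ and $p_k$ in the closures of different components, no simple arc joining them can avoid the existing arcs. For instance, take $n=6$ with $(\dl_1,\dots,\dl_6)=(3,3,1,1,1,1)$: your algorithm permits $j=1$, $k=2$ at each of the first three steps, and those three arcs from $p_1$ to $p_2$ can be placed so that $p_3$, $p_4$, and $\{p_5,p_6\}$ land in three separate bigons of $\Fh$. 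The invariant holds throughout, but after connecting $p_5$ to $p_6$ you are forced to join $p_3$ to $p_4$, which is now impossible.

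The paper handles exactly this by working not on $F$ but on an $n$-gon $X$ with $\dl_i$ marked points on the interior of edge $e_i$ (Claim \ref{n-gon}). At each step one joins the marked point on the maximal edge $e_1$ nearest $e_2$ to the marked point on the first subsequent nonempty edge $e_m$ nearest $e_{m-1}$; this arc cuts off a sub-polygon containing \emph{no} marked points, so the remainder of the induction lives in a single sub-polygon and disjointness is automatic. Afterwards each edge is collapsed to a point and the quotient disk is embedded in $\Fh$ with edge $e_i$ sent to $p_i$, recovering the multiarc on $F$. Your invariant argument transplants verbatim into that setting; only this planarity control is missing from your write-up.
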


The following claim implies Lemma \ref{multiarc} (c.f. \cite{Kuperberg-96}):
\begin{claim}\label{n-gon}
Let $X$ be an $n$-gon,  and let $e_1, e_2, \dt, e_n$  denote its edges in a cyclic order. 
For each edge $e_i$, choose $\dl_i$ distinct marked points in its interior. 
Then, there exists a multiarc $A'$ properly embedded in $X$ such that  each arc of $A'$ connects marked points on different edges of $X$ and each marked point is an end of exactly  one arc of $A'$. 
\end{claim}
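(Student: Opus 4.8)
Claim \ref{n-gon} is a purely combinatorial/topological statement about matching up marked points on the edges of a polygon by disjoint chords. The plan is to prove it by induction on the total number of marked points $N = \sum_{i=1}^n \dl_i$, which is even by \eqref{RH} (equivalently by the Riemann-Hurwitz relation). The key structural fact driving the induction is precisely inequality \eqref{RH}: no single edge carries more than half of all marked points, i.e.\ $2\dl_j \le N$ for every $j$.

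First I would dispose of the base case: if $N = 0$ there is nothing to do (the empty multiarc works), and if $N = 2$ the two marked points lie on two distinct edges — they cannot both lie on a single edge $e_j$, since that would force $\dl_j = 2 > 1 = N/2$, contradicting \eqref{RH} — so we connect them by a single chord. For the inductive step, assume $N \ge 4$ and the claim holds for all smaller (even) totals. The strategy is to peel off one chord cutting $X$ into two smaller polygons, each inheriting a marked-point distribution still satisfying the analogue of \eqref{RH}, then apply the inductive hypothesis to each piece. Concretely: pick an edge $e_i$ with $\dl_i \ge 1$ and a marked point $p$ on it that is ``outermost'' (adjacent along $e_i$, in the cyclic order of all marked points around $\partial X$, to a marked point $q$ on a neighboring edge $e_{i\pm1}$, or — if $e_i$ carries $\ge 2$ points — to another point of $e_i$ itself, but we want $q$ on a different edge). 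Draw the chord $\alpha$ from $p$ to $q$. This chord cuts $X$ into a ``small'' polygon $X_1$ (a triangle or bigon-type region containing no marked points in its interior other than the endpoints we just used) and a ``large'' polygon $X_2$; remove $p,q$ from the marked set. The delicate point is choosing $p$ and $q$ so that $X_2$'s marked-point distribution still obeys \eqref{RH}.

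The main obstacle — and the step I would spend the most care on — is exactly this: guaranteeing that after removing the two chosen endpoints, neither remaining polygon has an edge holding more than half of its remaining marked points. A single bad choice can violate this (e.g.\ if one edge holds nearly half the points and we remove a point from a different edge, that edge now holds strictly more than half). The clean fix is a pigeonhole/extremal argument: either every edge has $\dl_i < N/2$ strictly, in which case removing any two points from two distinct edges keeps all counts $\le N/2 - 1 < (N-2)/2$... wait, one must be careful here; the correct argument is to let $e_j$ be an edge attaining the maximum $\dl_j$. If $2\dl_j < N$ then $\dl_j \le N/2 - 1$ and after deleting two marked points (at least one from $e_j$ if $\dl_j$ is the unique max, or from any two distinct edges otherwise) every edge has at most $N/2 - 1 = (N-2)/2$, preserving the hypothesis; if $2\dl_j = N$ (so $e_j$ holds exactly half), then all other edges together hold the other half, and we \emph{must} connect a point of $e_j$ to a point not on $e_j$ — this is always possible since $N/2 \ge 2$ — and after deletion $e_j$ holds $N/2 - 1$ while the rest hold $N/2 - 1$, again fine. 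In every case the chord $\alpha$ can moreover be chosen ``short'' (cutting off a region $X_1$ with no interior marked points), so $X_1$ is handled trivially and $X_2$ by induction; the union of $\alpha$ with the multiarc produced in $X_2$ (pushed off to avoid $\alpha$, which is automatic as they meet only at $\partial X$) is the desired $A'$. Finally I would record how Claim \ref{n-gon} yields Lemma \ref{multiarc}: cut $F$ along a maximal tree of arcs joining its punctures to obtain a polygon $X$ with $2n$ edges... more simply, realize $F$ as an $n$-gon with its $n$ vertices removed to become the punctures $p_1, \dots, p_n$, place $\dl_i$ marked points on the edge ``between'' $p_i$ and $p_{i+1}$ appropriately — I would match the combinatorics so that marked points assigned to $p_i$ sit on the two edges incident to the $i$-th vertex, $\dl_i$ of them total — apply the claim to get disjoint chords, and push their endpoints into the corresponding punctures to get arcs of $A$ connecting distinct punctures with exactly $\dl_i$ arcs at $p_i$, as required.
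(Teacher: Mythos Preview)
Your argument is correct and follows the same inductive strategy as the paper: peel off one chord $\alpha$ connecting cyclically adjacent marked points on distinct edges (so that one side of $\alpha$ carries no marked points), verify that the reduced tuple still satisfies \eqref{RH}, and recurse. The paper streamlines your case analysis by always taking $\alpha$ from a point on an edge realizing the maximum (relabelled $e_1$) to the nearest marked point on the next nonzero edge $e_m$; this single canonical choice simultaneously guarantees the empty--side property and the preservation of \eqref{RH}, so no separate discussion of whether the maximum is uniquely attained is needed.
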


\begin{figure}[htbp]

\label{nested}
\begin{center}
\includegraphics[width=3in]{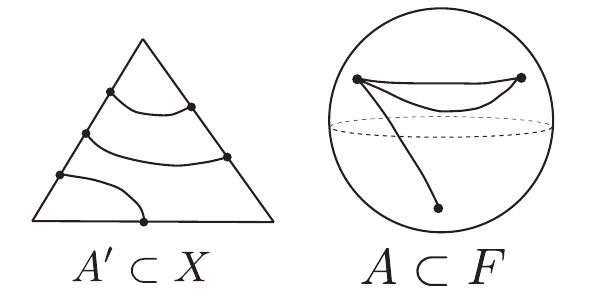}
\caption{The multiarcs $A$ and $A'$ for $(\dl_1, \dl_2, \dl_3) = (1,2,3)$.}
\label{multiarc.pdf}
\end{center}
\end{figure}

\begin{proof}[Proof of Lemma \ref{multiarc} with Claim \ref{n-gon} assumed] (See Figure \ref{multiarc.pdf}.)
For each $i \in \{1,2, \dt, n\}$, let $e'_i$ be the closed arc contained in the interior of  the edge $e_i$ so that $e_i'$ contains all marked points on $e_i$.
Consider the quotient space $X/\!\!\sim$ obtained by collapsing each $e'_i$ to a single point $r_i$  so that the end points of  the multiarc $A'$ on each $e_i$ are identified. 
Embed $X/\!\!\sim(\cong \D^2)$ into the sphere $\Fh$ so that $r_i$ maps to $p_i$ for each $i \in \{1,2, \dt n\}$.
Via this embedding, $A'/\!\!\sim$ realizes the desired multiarc $A$ on $F$. 
\end{proof} 

\begin{proof}[Proof of Claim \ref{multiarc}]
We prove this claim by induction on  the even non-negative integer  $\Sigma \dl_i$.
Suppose that the lemma holds if $(\dl_1, \dl_2, \dt, \dl_n)$ satisfies $\Sigma \dl_i = 2(k -1)$ for some  $k \in \Z_{> 0}$.
If $(\dl_1, \dl_2, \dt, \dl_n)$ satisfies $\Sigma_{i=1}^{n} \dl_i = 2k$, then 
without loss of generality, we can in addition assume that $\dl_1 = \max_{1 \leq i \leq n} \dl_i$.
Let $m$ be the minimal integer in $\{2, 3, \dots, n\}$ with $\dl_m \neq 0$. 
Then there are adjacent marked points on $e_1$ and $e_m$ (along the circle $\pt X$).
Thus pick an arc $\ap$ properly embedded in $X$ connecting these marked points; then that a component of $X \sm \ap$ contains no marked points.
Therefore it suffices to find a multiarc for the reduced $n$-tuple $$(\dl_1 -1, 0, 0,\dt, 0,\dl_m -1, \dl_{m+1}, \dl_{m +2}, \dt, \dl_n).$$
For this reduces $n$-tuple, we have $$(\dl_1 -1) + 0 + \dt + 0 + (\dl_m -1) + \dl_{m+1} + \dl_{m +2} + \dt + \dl_n = 2(k-1).$$
In addition, using Inequality (1) and also from the way the inequality is obtained, we see that 
$$2 \,max \{\dl_1 -1, 0, 0,\dt, 0,\dl_m -1, \dl_{m+1}, \dl_{m +2}, \dt, \dl_n \} \leq 2(k-1)$$
(if more than one $i \in \{1,2, \dots, n\}$ realizes $\max_{1 \leq i \leq n} \dl_i$, then consider if there are more than two non-zero $\dl_i$ or not).
Therefore, by the induction hypothesis, there is a multiarc $A$ on $X$ for the reduces $n$-tuple.
Then we can in addition assume that $A$ is contained in $X \sm \ap$.
Then $\ap \sq A$ is indeed the desired multiarc on $X$ for the original n-tuple $(\dl_1, \dl_2,  \dt, \dl_n)$.
\end{proof}

\begin{proposition}\label{TopEquiv}
Let $f_1, f_2\colon \Fh \to \rs$ be  branched covering maps, such that, for each $i \in \{1,2, \dt n\}$,  $~p_i$ is a ramification point of both $f_1$ and $f_2$ over $q_i$ with the ramification index $d_i$ and that $f_1$ and $f_2$ have no other proper ramification points. 
Then $f_1$ and $f_2$ are topologically equivalent, i.e.\  there are homeomorphisms $\phi\colon  \Fh \to \Fh$ and $\phi'\colon \rs \to \rs$ such that\\
(i) $\phi(p_i) = p_i$ and $\phi'(q_i) = q_i$ for all $i \in \{1,2, \dt, n\}$ and \\
(ii) $\phi' \cc f_1 = f_2 \cc \phi$.  
\end{proposition}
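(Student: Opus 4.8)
The plan is to reduce the statement to one about rational maps and then to apply the connectedness of the Hurwitz space, Theorem \ref{connected}. First I would replace $f_1$ and $f_2$ by rational maps: on $\Fh$ with its ramification points removed, each $f_j$ is a local homeomorphism, so the complex structure of $\rs$ pulls back, and near each $p_i$ --- where $f_j$ is topologically $z \mapsto z^{d_i}$ --- this structure extends. Since $\Fh$ is a $2$-sphere, the resulting Riemann surface is biholomorphic to $\rs$, and carrying $f_j$ across such a biholomorphism makes it a rational self-map of $\rs$ while changing it only by a topological equivalence (the identity on the target) that sends each $p_i$ to its image. So it suffices to treat the case in which $f_1$ and $f_2$ are rational; we keep the names $f_j$. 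Discard the trivial ramification points and let $i_1 < \dt < i_k$ be the indices $i$ with $d_i > 1$. If $k = 0$ the $f_j$ are homeomorphisms and the conclusion is immediate, so assume $k \geq 1$. Then each $f_j$, together with its (ordered) list of proper ramification points $p_{i_1}, \dt, p_{i_k}$, represents a point of the \emph{same} Hurwitz space $\mR(d_{i_1}, \dt, d_{i_k})$, its branch values being the distinct points $q_{i_1}, \dt, q_{i_k}$ (cf.\ \cite{Fulton-69, Volklein-96}).

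By Theorem \ref{connected}, $\mR(d_{i_1}, \dt, d_{i_k})$ is connected, hence path-connected; I would fix a path $t \mapsto ( (p^t_{i_1}, \dt, p^t_{i_k}), \tau_t )$, $t \in [0,1]$, from the point of $f_1$ to the point of $f_2$, whose branch values $q^t_{i_\ell} := \tau_t(p^t_{i_\ell})$ vary continuously and remain pairwise distinct. I would then run the classical isotopy-extension (Ehresmann-type) argument for this holomorphic one-parameter family of branched covers: first straighten the $k$ disjoint arcs $\{(t, q^t_{i_\ell})\}_t \st [0,1] \times \rs$ by a fibrewise ambient isotopy, so that the branch locus of the family becomes constant; then trivialize the family over the contractible interval $[0,1]$ --- which away from the branch values is the triviality of a covering space over $[0,1]$, and near each branch value is patched with the local model $z \mapsto z^{d_{i_\ell}}$ (this varies continuously along the path in suitable holomorphic coordinates). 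Evaluating the conjugating homeomorphisms at $t = 1$ yields a topological equivalence of $f_1$ with $f_2$ fixing every $q_{i_\ell}$ and matching up the $p_{i_\ell}$; undoing the biholomorphisms of the reduction step, this gives $\phi \colon \Fh \to \Fh$ and $\phi' \colon \rs \to \rs$ with $\phi' \cc f_1 = f_2 \cc \phi$, $\phi'(q_{i_\ell}) = q_{i_\ell}$ and $\phi(p_{i_\ell}) = p_{i_\ell}$ for $\ell = 1, \dt, k$.

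It then remains to arrange $\phi'(q_i) = q_i$ and $\phi(p_i) = p_i$ for the indices $i$ with $d_i = 1$; for these, $q_i$ is not a branch value and $p_i$ is one of the points of $f_2\iv(q_i)$. Treating one such $i$ at a time, I would post-compose $\phi'$ with an isotopy of $\rs$ supported in a small disk disjoint from all branch values and from all other $q_j$, dragging $\phi'(q_i)$ back to $q_i$; lifting this isotopy through $f_2$ adjusts $\phi$ accordingly and moves $\phi(p_i)$ to some $\td p_i \in f_2\iv(q_i)$. If $\td p_i \neq p_i$, push $q_i$ once around a loop $\gm$ in $\rs \sm \{q_{i_1}, \dt, q_{i_k}\}$, chosen to avoid the other $q_j$, and back to $q_i$: the resulting point-push of $\rs$ fixes every branch value and every $q_j$, and its lift through $f_2$ permutes $f_2\iv(q_i)$ by the monodromy of $\gm$. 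Since $\Fh$ is connected this monodromy group is transitive on $f_2\iv(q_i)$, so $\gm$ can be chosen with $\td p_i \mapsto p_i$; as the corrections for distinct $i$ have disjoint supports, they combine into the desired $\phi$ and $\phi'$.

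The delicate point is the trivialization of the one-parameter family of branched covers in the second step: proving that such a family over $[0,1]$ is topologically a product requires uniform control of the covers near the \emph{moving} branch values, the standard remedy --- available here because the family is holomorphic --- being to fix along the path small coordinate disks about the $q^t_{i_\ell}$ in which $\tau_t$ is exactly $z \mapsto z^{d_{i_\ell}}$ and to carry out the trivialization compatibly with these normal forms. Granting this, the rest is routine bookkeeping. Alternatively, one may argue combinatorially with the monodromy representations $\po(\rs \sm \{q_{i_1}, \dt, q_{i_k}\}) \to S_d$ ($d$ the common degree of the $f_j$) and the action of the pure mapping class group of the punctured sphere, connectedness of $\mR$ being precisely the assertion that there is a single orbit.
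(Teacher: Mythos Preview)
Your proof is correct and follows essentially the same route as the paper: pull back the complex structure through $f_j$ and uniformize to reduce to rational maps, invoke the connectedness of $\mR(d_{i_1},\dt,d_{i_k})$ to connect $\tau_1$ and $\tau_2$ by a path, and lift an isotopy of the target sphere (tracking the moving branch values) through the family $\tau_t$ to produce the conjugating homeomorphisms. The paper is at the same level of detail about the lifting step as you are, simply noting that one observes it ``locally using the local charts of the branched coverings $\tau_t$''.

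The one place where your execution diverges from the paper's is the treatment of the trivial ramification points ($d_i = 1$). You handle them \emph{after} producing $\phi,\phi'$ for the proper ramification data, by dragging $\phi'(q_i)$ back to $q_i$ with a supported isotopy and then correcting the fiber with a point-push whose monodromy sends $\td p_i$ to $p_i$; this is clean and exploits transitivity of the monodromy. The paper instead folds the trivial points into the family: it chooses paths $P_i(t)$ from $\psi_1(p_i)$ to $\psi_2(p_i)$ on the source sphere (avoiding collisions), sets $Q_i(t)=\tau_t(P_i(t))$ on the target, and arranges the target isotopy $\xi'_t$ to carry \emph{all} $n$ points $Q_i(t)$ along simultaneously, so that the single lifted isotopy already fixes every $p_i$ and $q_i$ at $t=1$. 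Both approaches work; yours separates concerns more sharply, while the paper's avoids the extra monodromy argument at the cost of one additional genericity step (ensuring the $Q_i(t)$ stay distinct).
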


\begin{proof}
Without loss of generality, we can assume that $d_1, d_2, \dt, d_k > 1$ and $d_{k+1} = d_{k+2} = \dt = d_n =1$ for some integer $k \in \{1,2,\dt, n\}$. 
For each $j =1,2$, let $\mC_j$ be the complex structure on $\Fh \cong \s^2$ obtained by pulling back the complex structure on $\rs$ via $f_j$.
Then $f_j\colon (\Fh, \mC_j) \to \rs$ is a meromorphic function.
By the uniformization theorem, $f_j$ is conformally equivalent  to a rational function, i.e.\  there exist a rational function $\tau_j\colon \rs \to \rs$ and a conformal map $\psi_j\colon (\Fh, \mC_j) \to \rs$ such that $f_j = \tau_j \cc \psi_j$. 
Then,  for each $i \in \{1,2, \dt, k\}$,  $~\psi_j(p_i)$ is the ramification point of $\tau_j$ over $q_i$ with the ramification index $d_i$, and for each $i \in \{k+1, k+2, \dt, n\}$, $\psi_j(p_i)$ is the trivial ramification point of $\tau_i$.

By Theorem \ref{connected}, there is a path $\tau_t ~(t\in [1,2])$ connecting $\tau_1$ to $\tau_2$  in  the space $\mR(d_1,d_2, \dt, d_k)$ of rational functions defined in \S \ref{Hurwitz}.
Along $\tau_t$, the (proper) ramification points of $\tau_t\cn \rs \to \rs$ continuously move on the source sphere $\rs$ without hitting each other; 
similarly the branched points of $\tau_t$ continuously move on the target sphere $\rs$ without hitting each other. 
Thus, for each $i \in \{1,2,\dt  k\}$, there is a unique closed curve $Q_i(t) ~(t \in [1,2])$ on the target sphere  such that $Q_i(1) = Q_i(2)= q_i$ and $Q_i(t)$ is a branched point of $\tau_t$ for all $t \in [1,2]$.
Then $Q_1(t), Q_2(t), \dt, Q_n(t)$ are the branched points of $\tau_t$  for all $t \in [1,2]$.
Similarly, for each $i \in \{1,2,\dt, k\}$, we have a unique ($not$ necessarily closed) curve $P_i(t)~ (t \in [1,2])$ on the source sphere, such that $P_i(t)$ is the ramification point of $\tau_t$ over $Q_i(t)$ with the ramification index $d_i$ for each $t \in [1,2]$.
Then $P_1(t), P_2(t), \dt, P_k(t)$ are the branched points of $\tau_t$ for each $t \in [1,2]$.
In particular $P_i(1) = \psi_1(p_i)$ and $P_i(2) = \psi_2(p_i)$ for each $i \in \{1,2, \dt, k\}$.

For each $i  \in \{k+1, k+2, \dt, n\}$, pick a path $P_i(t)$ on the source sphere $\rs$ connecting $\psi_1(p_i)$ to $\psi_2(p_i)$ such that, for each $t \in [1,2]$, $~P_1(t), P_2(t), \dt, P_n(t)$ are different points on the source sphere.
For each $i \in \{k+1, k+2, \dt, n\}$, let $Q_i(t) = \tau_t(P_i(t))~(t \in [1,2])$,  a path  on the target sphere $\rs$.
Then $Q_i(t)$ is a closed path based at $q_i$.
We can in addition assume that $Q_1(t), Q_2(t),\linebreak[2] \dt, Q_n(t)$ are different points on $\rs$, by perturbing the paths $P_i(t)$ with $k+1 \leq i \leq n$ if necessarily. 

Thus the paths $Q_i(t)$ obviously induce an isotopy between the branched points of $\tau_1$ and of $\tau_2$ on $\rs$. 
This isotopy of the branched points extends to an isotopy of the target sphere, denoted by  $\xi'_t\colon \rs \to \rs ~ (t \in [1,2])$. 
Since $\tau_t\cn \rs \to \rs$ is  continuous in $t$, the isotopy $\xi'_t$ on the target sphere uniquely lifts to an isotopy of the source sphere, $\xi_t\colon \rs \to \rs ~(t \in [1,2])$,  via $\tau_t$. 
Then $\tau_t \cc \xi_t = \xi'_t \cc \tau_1$ for all $t \in [1,2]$, and $\xi_t(p_i) = P_i(t)$ for each $i \in \{1,2, \dots, n\}$.
In particular  $\tau_2 \cc \xi_2 = \xi'_2 \cc \tau_1$.
Therefore we have  
$$f_2 \cc (\psi_2\iv \cc \xi_2 \cc \psi_1) = \tau_2 \cc \xi_2 \cc \psi_1 = \xi_2' \cc \tau_1 \cc \psi_1 =\xi'_2 \cc f_1.$$
Here $\xi'_2$ is a homeomorphism of $\rs$ fixing $q_i$, and  $\psi_2\iv \cc \xi_2 \cc \psi_1\colon \Fh \to \Fh$ is a homeomorphism fixing $p_i$  for all $i \in \{1,2, \dt, n\}$.
Thus $f_1$ and $f_2$ are topologically equivalent. 
\end{proof}
\begin{proof}[Proof (Proposition \ref{grafting})]
Let $C_0 = (f_0, \rho_{id})$ be an embedded projective structure on $F$ associated with $C$.
Let $A$ be the multiarc on $F$ obtained by Lemma \ref{multiarc}.
By Lemma \ref{multiarc} (i), we can graft $C_0$ along $A$; set $C_1 = (f_1, \phi_{id})$ to denote  $Gr_A(C_0)$. 
Then,  for all $i \in \{1,2, \dt, n\}$, we have $f(p_i) = f_1(p_i) = q_i$ and, by Lemma \ref{multiarc} (ii), the ramification index of $f$ and $f_1$ are both $d_i$ at $p_i$.
Therefore, by Proposition \ref{TopEquiv}, there are homeomorphisms $\phi\colon \Fh \to \Fh$ fixing all $p_i$ and a homeomorphism $\phi'\colon \rs \to \rs$ fixing all $q_i$, such that $\phi' \cc f = f_1 \cc \phi$.
Therefore $f = {\phi'}\iv \cc f_1 \cc \phi$.
For a homeomorphism $e: F \to  {\it Supp}(C) $ and an appropriate multiarc $N$ on $F$, let $Gr_N(\psi)$ denote the developing map of $Gr_N((e, \rho_\id))$, where $(e, \rho_\id)$ represents an embedded projective structure on $F$.  
In particular $f_1 = Gr_A(f_0)$.
Then
\begin{eqnarray*}
f = {\phi'}\iv \cc Gr_A(f_0) \cc \phi &=&  {\phi'}\iv \cc Gr_A (f_0 \cc \phi) \\
&=& Gr_{{\phi'}\iv(A)}({\phi'}\iv \cc f_0 \cc \phi).
\end{eqnarray*} 
Thus $({\phi'}\iv \cc f_0 \cc \phi,~ \rho_{id})$ is an embedded projective structure on $F$, and $C = (f, \rho_{id})$ is obtained by grafting this embedded projective  structure along the multiarc ${\phi'}\iv(A)$.
\end{proof}

%%%%%% A characterization good holed spheres

As an immediate corollary of Proposition \ref{grafting}, we obtain:
\begin{proposition}\label{GoodHoledSphere}
Let $C$ be a good projective structure on a holed sphere $F$.
Then $C$ can be obtained by grafting an embedded structure  associated with $C$ along a multiarc  (each arc of which connects different boundary components of $F$).
\end{proposition}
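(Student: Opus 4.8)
The plan is to deduce Proposition~\ref{GoodHoledSphere} from Proposition~\ref{grafting} by capping off the boundary components of $F$ with punctured disks, exactly as in the structure-extension construction of \S\ref{AlmostGood}. Let $C=(f,\rho_\id)$ be a good projective structure on the holed sphere $F$, with full support $R\st\rs$ a punctured sphere whose boundary components $\ell_1',\dots,\ell_m'$ are the images of the boundary components $\ell_1,\dots,\ell_m$ of $F$ under $f$ (here condition (v) of ``good'' gives the bijective correspondence, so $f|_{\ell_i}$ is a homeomorphism onto $\ell_i'$). For each $i$, let $D_i'$ be the disk component of $\rs\sm R$ bounded by $\ell_i'$, pick an interior point $q_i\in D_i'$, and glue a once-punctured disk $E_i$ to $\ell_i$ via the biholomorphism $z\mapsto z$ (degree one, since $f|_{\ell_i}$ is a homeomorphism), sending the puncture of $E_i$ to $q_i$. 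Choosing the $q_i$ distinct, this yields a good projective structure $\Ch=(\fh,\rho_\id)$ on the punctured sphere $\Fh:=F\cup E_1\cup\dots\cup E_m$, whose full support is the punctured sphere $\Rh:=R\sm\{q_1,\dots,q_m\}$ obtained by filling in each $D_i'$ and removing the point $q_i$.

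Next I would apply Proposition~\ref{grafting} to $\Ch$ on the punctured sphere $\Fh$: there is a basic structure $\Ch_0=(\fh_0,\rho_\id)$ associated with $\Ch$ and a multiarc $\Ah$ on $\Fh$, each arc connecting distinct punctures of $\Fh$, such that $\Ch=Gr_{\Ah}(\Ch_0)$. The key point is then to arrange (or to observe) that $\Ah$ may be taken disjoint from the capping disks $E_1,\dots,E_m$, so that $\Ah$ is in fact a multiarc properly embedded in $F$ whose arcs connect distinct boundary components of $F$. This should follow from the construction of the multiarc: in Lemma~\ref{multiarc}/Claim~\ref{n-gon} the arcs are obtained from an $m$-gon model and can be isotoped to lie in a collar of $\pt F$ avoiding small disk neighborhoods of the puncture points $q_i$; alternatively, since the punctures of $\Fh$ coming from the $E_i$ are precisely the ``new'' punctures and the grafting along an arc only branches the developing map at its two endpoints to order $2$, while the ramification data $d_i$ of $\fh$ at these punctures is inherited from the covering behavior of $f|_{\ell_i}$ on the boundary, a routine bookkeeping with the Riemann--Hurwitz count \eqref{RH} lets one choose $\Ah$ meeting each $E_i$ only in a short arc near $\pt E_i=\ell_i$ which is then pushed off $E_i$ entirely.

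Finally, restricting back to $F$: the basic structure $\Ch_0$ restricts to a basic structure $C_0=\Ch_0|F$ on $F$ (a homeomorphism onto $R$, since $\fh_0$ is a homeomorphism onto $\Rh$ and $F$ corresponds to $R$ under capping), and $C_0$ is a basic structure associated with $C$ because $\fh_0(\ell_i)=\fh(\ell_i)=f(\ell_i)=\ell_i'$ for each boundary component. Since $\Ah\st F$ and grafting is local along the arcs, $Gr_{\Ah}(\Ch_0)|F = Gr_{\Ah}(C_0)$, and the left side is $\Ch|F=C$. Hence $C=Gr_{\Ah}(C_0)$ with $C_0$ a basic structure associated with $C$ and $\Ah$ a multiarc each arc of which connects distinct boundary components of $F$, as required.

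The main obstacle I expect is the disjointness claim in the middle paragraph: Proposition~\ref{grafting} as stated only produces \emph{some} multiarc on $\Fh$, and one must genuinely verify that the particular combinatorial multiarc built in Lemma~\ref{multiarc} (or an isotopic one) can be realized in the collar region, i.e.\ that grafting along it never needs to pass through the interior of a capping disk $E_i$. Making this precise requires either re-examining the proof of Lemma~\ref{multiarc} to note the freedom in placing the arcs relative to the puncture points, or arguing directly with the developing map that any multiarc realizing the ramification profile is isotopic (rel punctures) to one avoiding the $E_i$ — which is essentially the content of Proposition~\ref{TopEquiv} applied to $\fh$ and $\fh_1=dev(Gr_{\Ah}(\Ch_0))$ restricted to a neighborhood of $\pt F$. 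Everything else is formal manipulation of the restriction operation and of grafting.
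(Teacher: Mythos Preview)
Your route---cap off to a punctured sphere, invoke Proposition~\ref{grafting}, restrict back---is exactly what the paper intends (it states Proposition~\ref{GoodHoledSphere} as an ``immediate corollary'' with no further proof). But two points in your execution are wrong, and the first one is serious.

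The assertion that $f|_{\ell_i}$ is a homeomorphism is false. Condition~(v) in the definition of ``good'' gives only a bijection between the \emph{sets} of boundary components of $F$ and of $R$; each $f|_{\ell_i}\colon\ell_i\to\ell_i'$ is a covering of some degree $d_i\ge 1$, and these degrees are precisely the data that make the proposition nontrivial. Indeed, if all $d_i=1$ then Riemann--Hurwitz applied to the capped-off map forces $\deg\fh=1$, so $C$ is already basic and there is nothing to prove. The structure extension of \S\ref{AlmostGood} caps $\ell_i$ with a punctured disk carrying the branched map $z\mapsto (z-q_i)^{d_i}+q_i$, so that the new puncture $p_i$ is a ramification point of $\fh$ of index $d_i$; this, not the degree-one map you wrote, is what must be used.

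Second, your disjointness formulation cannot hold: every arc of $\hat A$ terminates at some $p_i\in E_i$, so $\hat A$ necessarily meets each $E_i$ with $d_i>1$. What you actually need is that, after isotopy on $\Fh$, each $\hat A\cap E_i$ consists only of $d_i-1$ arcs from $p_i$ to $\ell_i=\partial E_i$. This follows from a routine outermost-arc argument in the once-punctured disk $E_i$: among boundary-to-boundary components of $\hat A\cap E_i$ there is always one cutting off a subdisk of $E_i$ missing $p_i$ and all other arcs of $\hat A$, across which it can be pushed out of $E_i$. Once $\hat A$ is so normalized, $A:=\hat A\cap F$ is a multiarc on $F$ with each arc joining distinct boundary components, $\hat C_0|_F$ is a basic structure $C_0$ associated with $C$, and the equality $C=\hat C|_F=Gr_{\hat A}(\hat C_0)|_F=Gr_A(C_0)$ is then immediate from the description of grafting in \S\ref{graft}.
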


%%%%%%%%%%%%%%%%%%%%%%%%
% 	THE PROOF OF THE MAINT THEOREM
%%%%%%%%%%%%%%%%%%%%%%%%
\section{The proof of the main theorem}
Recall that $S$ is a closed orientable surface of genus $g$, $~\Gm$ is a real Schottky group of rank $g > 1$, and $\rho\colon \po(S) \to \Gm \st \psl$ is an epimorphism. 
\begin{theorem}\label{main}
Every Schottky structure $C = (f,\rho)$ on $S$  can be obtained by grafting a uniformizable Schottky structure with holonomy $\rho$ (along a multiloop on $S$). 
\end{theorem}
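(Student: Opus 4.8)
The plan is to combine the two structural results proved above. By Theorem \ref{M} there are a multiloop $M$ on $S$ and a $\rho$-equivariant homeomorphism $\zeta\colon \St \to \Og$ such that, for each component $P$ of $S \setminus M$ with homeomorphic lift $\Pt \subset \St$, the restriction $C|P \cong \Ct|\Pt$ is a good holed sphere fully supported on $\zeta(\Pt)$, a component of $\Og \setminus \Mt'$. Let $C_0$ be the basic Schottky structure $\Og/\Gm$, marked so that the developing map of $C_0$, lifted to the minimal cover $\St$, is $\zeta\colon \St \to \Og \subset \rs$; by Lemma \ref{basicSchottky} $C_0$ is basic, and since $\zeta$ is $\rho$-equivariant its holonomy is $\rho$. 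The first observation is that $C_0|P$ is a basic structure associated with the good holed sphere $C|P$ in the sense of \S\ref{AlmostGood}: the developing map of $C_0|P$ is $\zeta|_{\Pt}$, a homeomorphism onto the support $\zeta(\Pt)$, and by Theorem \ref{M}(i) it carries each boundary loop $\ell$ of $\Pt$ onto $\zeta(\ell) = f(\ell)$, which is exactly the required agreement on boundary components.

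Next I apply Proposition \ref{GoodHoledSphere} to each piece: there is a multiarc $A_P$ properly embedded in $P$, each of whose arcs joins two distinct boundary components of $P$, with $C|P = Gr_{A_P}(C_0|P)$. The number of endpoints of $A_P$ lying on a given boundary loop $\ell$ of $P$ is $d_\ell - 1$, where $d_\ell$ is the degree of the covering $f|_{\tilde\ell}$ of $\zeta(\ell)$ (this is the content of Lemma \ref{multiarc}(ii), via Riemann--Hurwitz). If $m$ is a loop of $M$ bounding adjacent pieces $P$ and $P'$, then in $\St$ the lifted pieces $\Pt$ and $\Pt'$ share a single boundary loop $\tilde m$ over $m$, so $d_m := \deg(f|_{\tilde m})$ is well defined and $A_P$, $A_{P'}$ contribute the same number $d_m - 1$ of endpoints to $m$.

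I would then carry out the gluing. Using that arc-grafting is unchanged under isotopy of the arc, isotope the multiarcs so that along each loop $m$ of $M$ the $d_m - 1$ endpoints coming from $P$ are matched, bijectively and in an order-preserving way along $m$, with those coming from $P'$. The union $L := \bigcup_P A_P$ with endpoints so identified is a properly embedded closed $1$-manifold with no endpoints, i.e.\ a multiloop on $S$, whose components are pairwise disjoint and transverse to $M$. Because the arc-grafting operation was defined (\S\ref{graft}) to be compatible with the identification of boundary components, the structure obtained by grafting $C_0$ along $L$ restricts on each $P$ to $Gr_{A_P}(C_0|P) = C|P$, and these restrictions agree along $M$; therefore $Gr_L(C_0) = C$, which is the assertion of the theorem (with the basic Schottky structure taken to be $\Og/\Gm$).

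The step I expect to be the main obstacle is making this gluing rigorous, in two respects. First, one must check that the matched-up arcs genuinely assemble into an \emph{embedded} multiloop rather than a graph or a self-crossing curve; this is where the equality of the endpoint counts $d_m-1$ on the two sides of each $m$, and the order-preserving choice of matching, are essential, and some care is needed when several loops of $M$ are parallel (so that a piece $P$ is an annulus carrying a nontrivial branched annular structure). Second, for $Gr_L(C_0)$ to be defined one needs every component $L_i$ of $L$ to be an admissible loop on $C_0$; since $C_0$ is basic with Schottky holonomy, this is equivalent to $\rho(L_i)$ being loxodromic, i.e.\ $[L_i] \notin \ker\rho$. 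I would establish this by tracking how $L_i$ meets the meridian multiloop $M$ — each passage of $L_i$ through a piece $P$ being an arc of $A_P$ joining \emph{distinct} boundary components of $P$ — together with the $\rho$-equivariant model $\zeta$ of $C_0$, and, if a naive matching were to produce a component with trivial holonomy, by rechoosing the matching (or absorbing such a component into an adjacent piece) so as to eliminate it. Granting that $L$ is an admissible multiloop, the identity $C = Gr_L(C_0)$ established above completes the proof.
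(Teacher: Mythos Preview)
Your proposal is correct and follows essentially the same route as the paper's own proof: decompose along $M$ via Theorem~\ref{M}, apply Proposition~\ref{GoodHoledSphere} piecewise, match up the arc endpoints (same count $d_m-1$ on each side of every $m\in M$), and check that the glued multiloop is admissible and that the piecewise graftings assemble to $Gr_L(C_0)$. Two points where the paper is sharper than your hedges: for admissibility no rematching is ever needed, because each arc of $A_P$ joins \emph{distinct} boundary components of $P$, so a lift of any loop of $L$ to the planar surface $\St$ traverses the dual tree of $(\St,\Mt)$ without backtracking and is therefore bi-infinite (Lemma~\ref{lemma}(ii)); and for the gluing identity the paper verifies explicitly that along each loop $\alpha$ of $L$ the inserted cylinders $Y_j$ assemble into the Hopf torus $(\rs\setminus Fix(\rho(\alpha)))/\langle\rho(\alpha)\rangle$, which is exactly the torus inserted by $Gr_\alpha$ on $C_0$.
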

\nin{\it Remark:} By Lemma \ref{basicSchottky}, a uniformizable projective structure with holonomy $\rho$ is $\Og/\Gm$ with some marking.
\begin{proof}
In $\S \ref{HoledSphere}$, we constructed the multiloops $M$ and $M'$  on $(S, C)$ and $\Og/\Gm$, respectively;
the multiloops $\Mt$ and $\Mt'$ are the total lifts of $M$ and $M'$ to $\St$ and $\Og$, respectively. 
Then $M$ decomposes $(S, C)$ into projective structures $(F_i, C_i)_{i=1}^n$, where $F_i$ are components of $S \sm M$ and $C_i = C|F_i$. 
Then, for each $i \in \{1,2, \dt, n\}$,  by Theorem \ref{M},  $(F_i, C_i)$ is a good holed sphere fully supported on a component of $\Og \sm \Mt'$ that corresponds to $C_i$ via the developing map $f$.
By Proposition \ref{GoodHoledSphere}, $C_i = Gr_{A_i}(C_{0,i})$, where $C_{0,i}$ is an embedded structure on the holed sphere $F_i$ with ${\it Supp}(C_{0,i}) = {\it Supp}(C_i)$ and $A_i$ is a multiarc on $F_i$ such that each arc of $A_i$ connects distinct boundary components of $F_i$.
For each loop $\ell $ of $M$, every lift $\lt$ of $\ell$ to $\St$ is a loop covering a loop of $\Mt'$ via $f$.
Let $d_\ell$ be the degree of this covering map $f|_{\lt}$.
The loop $\ell$ corresponds to exactly two boundary components of  $F_1 \sq F_2 \sq \dt \sq F_n$. 
Then, on each of these two boundary components, there are exactly $d_\ell - 1$ arcs of $A_1 \sq A_2 \sq \dt \sq A_n$ ending. 
Therefore we can isotope $A_i$ on $F_i$ for all $i \in \{1,2, \dt, n\}$, keeping the endpoints of $A_i$ of $\pt F_i$, so that the endpoints of $A_1, A_2, \dt, A_n$ pair up  and  $\cup A_i$ is a multiloop $A$ on $S$.
(There are infinitely many non-isotopic choices for $A$: We can Dehn twist $A$ along a boundary component of $F_i$.)

\begin{lemma}\label{lemma}
(i) With respect to $S = \cup_{i =1}^n F_i$\,, the union of the embedded projective structures $C_{0,i}$  on $F_i$ $(i = 1,2,\dots, n)$ is a uniformizable Schottky structure on $S$ with holonomy $\rho$.\\
(ii) If $\ap$ is a component of the multiloop $A$, then $\rho(\ap)$ is loxodromic,  i.e.\ $\rho(a) \neq 1$.  
\end{lemma}

\begin{proof}
(i).
Assume that $C_i$ and $C_j$ are adjacent components of $C \sm M$, sharing a boundary component $\ell $.
Then ${\it Supp}(C_i)$ and ${\it Supp}(C_j)$ are adjacent components of $\Og \sm \Mt$ (up to an element of $\Gm$),  sharing a boundary component $f(\lt)$, where $\lt$ is a lift of $\ell $ to $\St$.
Since $C_{0,i}$ and $C_{0,j}$ are  the canonical projective structures embedded in $\rs$, respectively, we can identify the boundary components of  $C_{0,i}$ and $C_{0,j}$ corresponding to $\ell $. 
Similarly identify  all corresponding boundary components of  $C_{0,i} ~(i=1,2, \dt, n)$ and obtain a projective structure $C_0$ on $S$.
Let $\Ct_0 = (f_0, \rho_{id})$ be the projective structure on $\St$ obtained by lifting $C_0$ to $\St$, where $f_0$ is a $\rt$-equivariant immersion from $\St$ to $\rs$.
Since $f_0$  embeds each components of $\St \sm \Mt$ onto a component of $\Og \sm \Mt'$,  thus it
 is a $\rt$-equivariant embedding onto $\Og$.
 Therefore $(S, C_0)$ is a uniformizable Schottky structure with holonomy  $\rho$.

(ii). Let $\ap$ be  a loop of $A$.
Then $M$ decomposes $\ap$ into subarcs $a_1, a_2, \dt, a_m$,  so that  $\ap = a_1 \cup a_2 \cup \dt \cup a_m$.
Let $\td{\ap}$ be a lift of $\ap$ to $\St$.
Then each $a_j ~(j =1,2, \dt, m)$ is an arc properly embedded in $F_i$ for some $i \in \{1,2, \dt, n\}$,  connecting different boundary components of $F_i$. 
Therefore, for each component $P$ of $\St \sm \Mt$, either $\td{\ap}$ is disjoint from $P$ or $\td{\ap}$ intersects $P$ in a single arc connecting different boundary components of $P$.
Thus $\td{\ap}$ is a biinfinite simple curve properly embedded in $\St$, and then $\ap$, as an element in $\po(S)$, translates $\St$  along $\td{\ap}$. 
Therefore $~\rho(\ap)$ is loxodromic.
\end{proof} 

Let $C_0$ be the uniformizable projective structure  $\cup_{i = 1}^n C_{0, i}$ on $S$ given by Lemma \ref{lemma} (i).
We will show that $C$ is obtained by grafting $C_0$ along $A$.
To complete the proof, we will see that the grafting $Gr_A$ on $C_0$ ``commutes'' with the decomposition $C_0 = \cup_{i=1}^n G_{0,i}$ as 
  $$\cup_{i=1}^n Gr_{A_i}(C_{0,i}) = Gr_A(C_0).$$ 
For each $j \in \{1,2, \dt, m\}$, the arc $a_j$ is property embedded in $C_{0,i}$ with some $i \in \{1,2, \dt, n\}$;
let $b_j, c_j$ denote the boundary components of $C_{0,i} = (F_i, C_{0,i})$ connected by  $a_j$.

Clearly the developing map $dev(C_{0,i})$ embeds $C_{0,i}$ isomorphically onto ${\it Supp}(C_{0,i})$, which is a component of $\Og \sm \Mt'$.
Via this isomorphism, (the images of) $b_j$ and $c_j$ bound a projective  cylinder $Y_j$ in $\rs$, and $Y_j$ contains the arc $a_i$ connecting connecting $b_j$ and $c_j$.
By the definition of graftings,  $Gr_{a_j}(C_{0,i})$ is obtained by  appropriately identifying the boundary arcs of  $Y_j \sm a_j$ and $C_{0,i} \sm a_j$ corresponding to $a_j$.

Suppose that $a_{j_1}$ and $a_{j_2}$ ($j_1, j_2 \in \{1,2, \dt, m\}$) are adjacent arcs in $\ap$, sharing an endpoint $v$.
Similarly, for each $k =1,2$, we have $a_{j_k} \st C_{0, i_k}$ for some $i_k \in  \{1,2, \dt, n\}$;  let $Y_{j_k}$ be its corresponding projective cylinder, as above, containing $a_{j_k}$. 
Then $C_{0,i_1}$ and $C_{0,i_2}$  are isomorphic to some adjacent components of $C_0 \sm M$, sharing the boundary component containing $v$.
Accordingly, $Y_{j_1}$ and $Y_{j_2}$ are also adjacent cylinders embedded in $\rs$ (up to an element of $\Gm$).
Thus we can identify the corresponding boundary components of  $Y_{j_1}$ and $Y_{j_2}$.
Similarly identify all corresponding boundary components of  $Y_j$  ($j=1,2, \dt, m$) and obtain a projective torus $T = \cup_j Y_j$.
Then the loop $\ap = \cup_j \,a_j$ is naturally embedded in $T$.

\begin{figure}[htbp]\label{AddCylinders}
\begin{center}
\includegraphics[width=4in]{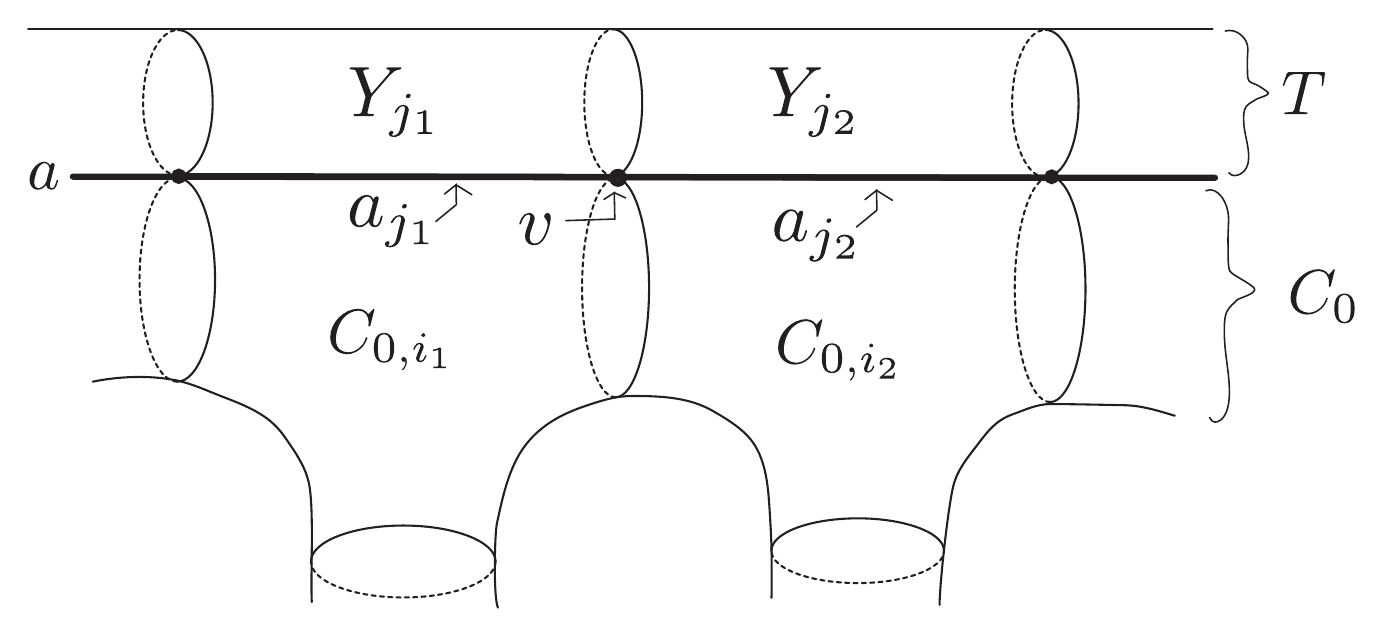}
\caption{}
\end{center}
\end{figure}

We  show that $T$ is a hopf torus. 
Let $N$ be the union of boundary components of $Y_j$ in $T$. 
Then $N$ is the multiloop splitting $T$ into $Y_1, Y_2, \dots, Y_m$.
The homotopy class of $\ap$ generates an infinite cyclic subgroup $\langle \ap \rangle$ of $\po(S)$. 
Let $\td{T}$ be the projective  cylinder coving $T$ with the covering transformation group $\langle \ap \rangle$. 
Then $\td{\ap} \st \St$ is also embedded in $\Tt$.
Let $\td{N}$ denote the total lift of $N$ to $\Tt$. 
Then $\td{\ap}$ transversally intersects each loop of  $\Nt$ in a single point.
Then $\td{\ap}$ is embedded into $\Og$ under the developing map of $C_0$ and, therefore, of $T$.
Thus $dev(T)$ embeds $\Nt$ onto the multiloop on $\Og$ consisting  of the loops of $\Mt'$ interesting $\apt$, and therefore
 $dev(T)$ is an embedding onto $\rs$ minus the fixed points of $\rho(\ap)$.
Hence $T$ is a Hopf torus.  

We adapt this observation for a single loop to the the entire multiloop $A$.
Set $A = \ap_1 \sq \ap_2 \sq \dt \sq \ap_r$,
where $\ap_1, \ap_2, \dt, \ap_r$ are the loops of $A$.
Further decomposing those loops into the components of $A_1,  A_2, \dt,  A_n$ similarly to the proof Lemma \ref{lemma} (ii), we set $A = a_1 \cup a_2 \cup \dt \cup a_m$.
Then, for each $j \in \{1,2, \dt, m\}$, $~a_j$ is an arc properly embedded in $F_{i(j)}$ with some $i(j) \in \{1,2, \dt, n\}$. 
Similarly, let $Y_j$ be the projective cylinder for the grafting of $C_{0, i(j)}$ along the arc $a_j$, so that  $Gr_{a_j}(C_{0,i(j)}) = (C_{0, i(j)} \sm a_j) \cup (Y_j \sm a_j)$.
Then, for each $i \in \{1,2, \dt, n\}$, we have
$$Gr_{A_i}(C_{0,i}) = (C_{0,i} \sm A_i) \cup (\,\sq_j \,\{\,Y_j \sm a_j \}\,),$$
where the second union runs over all $ j \in  \{1,2, \dt, m\}$ with  $a_j \st A_i$.
For each $k \in \{1,2, \dt r\}$, let $T_k$ denote the Hopf torus associated with $\ap_k$\,;
then  $T_k = \cup\, Y_j $, where the union runs over all $j \in \{1,2,\dots, m\}$ with $a_j \st \ap_k$.
Therefore   
\begin{eqnarray*}
C  &=& \cup_{i =1}^n C_i\\
&=& \cup_{i=1}^n Gr_{A_i}(C_{0,i})\\
&=& \cup_{i=1}^n [~(C_{0,i} \sm A_i) \cup (\,\sq\,\{\, Y_j \sm a_j~ | ~a_j \st A_i\}\,)~ ]\\
&=& [\,\cup_{i=1}^n \,(C_{0,i} \sm A_i)\,] \,\sq \,[\,\cup_{j=1}^m (Y_j \sm a_j)\,]\\
&=& (C_0 \sm A) \cup  [\,(T_1 \sm \ap_1) \cup (T_2  \sm \ap_2) \cup \dt \cup (T_r \sm \ap_r)\,]\\
&=& Gr_A(C_0)~.
\end{eqnarray*}
\end{proof}

%% Bibliography
\bibliographystyle{plain}
\bibliography{Reference}

\end{document}